\documentclass{article}

\usepackage{amssymb}
\usepackage{amsmath}
\usepackage{enumerate}
\usepackage{mathrsfs}
\usepackage{graphics}
\usepackage{graphicx}
\usepackage{xcolor}
\usepackage{datetime}

\usepackage{subfigure,dsfont}
\usepackage[T1]{fontenc}
\usepackage{latexsym,amssymb,amsmath,amsfonts,amsthm,txfonts,thmtools}
\usepackage[colorlinks=true, linkcolor=blue, citecolor=blue]{hyperref}

\usepackage{graphicx}
\usepackage{amsmath}
\usepackage{amsfonts}
\usepackage{amsthm}
\usepackage{amssymb,mathtools,mathrsfs}
\usepackage{esint}
\usepackage{enumitem}
\usepackage{dsfont}

\usepackage[T1]{fontenc}
\usepackage[toc,page]{appendix}
\DeclareMathOperator*{\esssup}{ess\,sup}
\usepackage[paperwidth=14cm,textwidth=12cm]{geometry}

\usepackage{setspace}
\usepackage{fouridx}

\newcommand{\nun}{\nu}

\newcommand\eps{\varepsilon}
\newcommand{\toup}{\nearrow}
\newcommand{\dom}{\mathcal{O}}
\newcommand{\rA}{\mathrm{A}}
\newcommand{\rV}{\mathrm{V}}
\newcommand{\rH}{\mathrm{H}}

\newcommand{\rv}{\mathrm{v}}
\newcommand{\divv}{\mathrm{div}}
\newcommand{\Leb}{\mathcal{L}}
\newcommand{\loc}{\mathrm{loc}}
\newcommand{\rM}{M}

\newtheorem{theorem}{Theorem}[section]
\newtheorem{lemma}[theorem]{Lemma}
\newtheorem{prp}[theorem]{Proposition}
\newtheorem{prop}[theorem]{Proposition}
\newtheorem{cor}[theorem]{Corollary}
\newtheorem{remark}[theorem]{Remark}
\newtheorem{definition}[theorem]{Definition}
\newtheorem{assumption}{Assumption}[section]
\newtheorem{example}[theorem]{Example}

\numberwithin{equation}{section}

\begin{document}

\title{Global Existence and Stability of 3D Stochastic NSEs in Bounded and Unbounded Domains Driven by a Special Multiplicative Wiener Process}
\author{Zdzis{\l}aw Brze{\'z}niak$^{a}$, Shijia Zhang$^{b}$ and Guoli Zhou$^{c}$\\
{\small{$a.$   Department of Mathematics, University of York,  York, UK}}\\
{\small $b.$ Academy of Mathematics and Systems Science,}\\
{\small Chinese Academy of Sciences, Beijing 100190, China}\\
{\small{ $c.$ School of Statistics and Mathematics, Chongqing University, Chongqing 400044, China}}\\
({\small zdzislaw.brzezniak@york.ac.uk, shijiazhang@amss.ac.cn, zhouguoli736@126.com })
}

\date{}
\maketitle

\abstract{The aim of this work is to extend the results from a recent paper  \cite{Hong+Li+Liu_2024} by Hong, Li and Liu,  from bounded domains to  both bounded and  unbounded domains. We show the global existence and uniqueness of 3D stochastic Navier-Stokes equations with nonlinear multiplicative noise for every initial data from the Sobolev space $\mathbb{H}^1$. We do not use any tightness argument. Instead, we firstly show the existence of a local maximal solution and then we use Lyapunov function to prove the solution is global.
Our approach is motivated by a recent paper \cite{Brz+Ferrario+Maurelli+Zanella_2025} of the first named author with Ferrario, Maurelli and Zanella about a similar result for stochastic nonlinear Schr\"odinger Equations. The main idea is that once the local existence of strong solutions is established, a very strong noise pushing toward the origin, will make blow-up impossible.}

\section{Introduction}\label{sec-Introduction}
Let $\dom \subset \mathbb{R}^3$ be an open, connected (possibly unbounded) domain with smooth boundary $\partial \dom$. Let $\left(\Omega,\mathscr{F},\mathbb{F},\mathbb{P}\right)$ be a complete probability space endowed with a filtration $\mathbb{F} := \{\mathscr{F}_t\}_{t\geq 0}$.

We consider the stochastic Navier–Stokes equations:
\begin{equation}\label{eqn-NSE-01'}
\frac{\partial u}{\partial t} + (u \cdot \nabla) u - \nu \Delta u + \nabla p = f + \phi(u) u \, \mathrm{d}W, \;\; x \in \dom,
\end{equation}
with the incompressibility condition
\begin{equation}\label{eq-incompressibility}
\divv u = 0.
\end{equation}
Here, $W = \{W(t): t \in [0,\infty)\}$ is a real-valued Wiener process adapted to $\mathbb{F}$, $\nu > 0$ denotes the viscosity and the function $\phi: \rV \to \mathbb{R}$ satisfies appropriate conditions.

Our main result can be viewed as a kind of regularization by noise for the deterministic Navier–Stokes equations. We establish sufficient conditions on the diffusion coefficient $\phi$ that guarantee global-in-time existence of solutions to the stochastic Navier–Stokes equation \eqref{eqn-NSE-01'}, while such a result does not hold for the corresponding deterministic equation (i.e., \eqref{eqn-NSE-01'} with $\phi = 0$).
More precisely, we show that if the initial data $u_0 \in \mathbb{H}^1$ and $\phi$ satisfies certain assumptions, then the stochastic Navier–Stokes equation \eqref{eqn-NSE-01'} admits a unique global strong solution.
Our proof relies on the contraction mapping Theorem and the choice of a suitable Lyapunov function.

Let us review in more detail the known results.

\subsection{Existence of global solutions}
The three-dimensional stochastic Navier–Stokes equations (3D SNSEs) are a central model in stochastic fluid dynamics, describing the motion of an incompressible viscous fluid subject to random perturbations. The global well-posedness of the three-dimensional stochastic Navier–Stokes equations has been a long-standing and challenging open problem in the theory of stochastic partial differential equations (SPDEs).

The local and global well-posedness for the SNSE with multiplicative noise is not as well understood. Initially, the local existence of solutions was addressed in Bensoussan and Temam \cite{Bensoussan+Temam_1973} with additive noise. The first paper which consider the SNSEs driven by the so-called transport noise was the paper \cite{Brz+Capinski+Flandoli_1992}, by the first named author, Capi\'nski and Flandoli. This direction was later taken up by Flandoli and Gatarek \cite{Flandoli+Gatarek_1995}, who also treated other types of Gaussian noise. All these papers and others, which we have not cited, have dealt with NSEs in bounded domains.
The first paper which dealt with the existence of solutions to
the SNSEs (as well as Euler Equations) in unbounded domains was the paper \cite{Brz+Peszat_2001}. They consider the two dimensional SNSEs with multiplicative noise. After that Mikulevicius and Rozovskii published the paper \cite{Mikulevicius+Rozovskii_2005} which studied the SNSEs in the whole space. The method used by these authors was different and used weak topologies instead of weighted spaces used in the former paper.
Later, Mikulevicius and Rozovskii \cite{Mikulevicius+Rozovskii_2005} established the local existence of solutions to the SNSE with the initial data in $\mathbb{W}^{1,p}(\mathbb{R}^d)$, where $p > d$. They also proved that the solution becomes global when the equation is posed in 2D. The regularity of initial data was then further reduced by Glatt-Holtz and Ziane, who in \cite{Glatt-Holtz+Ziane_2009} obtained the local well-posedness of the SNSE with multiplicative noise on a 3D bounded domain and proved the global existence
of solutions in 2D, assuming that the initial data belong to $\mathbb{H}^1$. In \cite{Kim_2010}, Kim obtained the local existence of solutions for the 3D SNSE with initial data in $\mathbb{H}^s$, for $s > 1/2$, and the global existence with a large probability for small data.
Brze\'zniak and Motyl \cite{Brz+Motyl_2013} proved the existence of weak martingale solutions to SNSEs for general unbounded domains.
In \cite{Flandoli+Luo_2021}, Flandoli and Luo show that a suitable transport noise provides a bound on vorticity which gives well-posedness of the 3D SNSE with high probability.

The recent paper by Hong, Li and Liu \cite{Hong+Li+Liu_2024}  establishes a breakthrough result by proving the existence and uniqueness of global strong solutions to the stochastic 3D Navier–Stokes equations on bounded domains, driven by nonlinear multiplicative noise. Their framework, based on a generalized coercivity condition and a variational approach, shows that nonlinear noise plays a key role in preventing singularities.

The aim of the present work is to extend the results of Hong, Li and Liu in \cite{Hong+Li+Liu_2024} from bounded domains to the setting of unbounded domains. That is to say, we intend to show the global existence and uniqueness of 3D stochastic Navier-Stokes equations with nonlinear multiplicative noise. The unbounded domain introduces fundamental analytical difficulties, as the key compact embedding $\rV \subset \rH$—essential to the tightness argument in \cite{Hong+Li+Liu_2024}—no longer holds. To overcome this challenge, we adopt a fixed-point approach inspired by Brzeźniak, Hausenblas, and Razafimandimby \cite{Brz+Haus+Raza_2021}, which avoids compactness arguments entirely.
Note that this approach can be seen as a generalization of the  the classical method of proving the existence of local strong solutions used for instance by the first named author in \cite{Brz_1991}, which in turn was based on ideas from the monograph \cite{Vishik+Fursikov_1988},
can be generalised to the setting of stochastic Navier-Stokes Equations.

\subsection{No blow-up by noise}
A central question in the study of stochastic partial differential equations is whether suitable multiplicative noise can prevent finite-time blow-up and ensure global existence of solutions. This phenomenon, often called no blow-up by noise or regularization by noise, describes how noise can improve the behavior of nonlinear partial differential equations and has been observed in many nonlinear systems in recent years.

Cerrai \cite{Cerrai_2005} studied stochastic reaction–diffusion equations with multiplicative noise. She proved that noise can stabilize systems that are not asymptotically stable in the deterministic case, yielding uniqueness, ergodicity, and strong mixing of invariant measures. Hence, her meaning of "stabilization by noise" was very interesting, yet different from ours.

Alonso-Orán, Miao and Tang \cite{Alonso+Miao+Tang_2022} proved that fast-growing nonlinear noise (strong noise) prevents blow-up for a stochastic non-local one-dimensional transport equation, while in the particular linear noise case, they show that singularities occur in finite time with positive probability.
Tang and Wang \cite{Tang+Wang_2022} then proposed a general framework for singular SPDEs driven by a special pseudo-differential noise, which covers many fluid-type models including Navier-Stokes equations and provides general non-explosion criteria.
Ren, Tang and Wang considered in \cite{Ren+Tang+Wang_2024} a general framework for no blow-up for SPDEs and used it to show no blow-up by a superlinear noise in two examples, the first one is a general nonlinear stochastic transport equation, and the second one is the distribution-path dependent stochastic Camassa-Holm equation.
Crisan and Lang \cite{lang+Crisan_2025} introduced superlinear noise structures that ensure global solutions for a class of evolution equations that possibly have only local solutions, such as the Navier-Stokes equation on the 3D torus $\mathbb{T}^3$.
Flandoli and Rehmeier \cite{Flandoli+Rehmeier_2024} explored the effects of small-scale perturbations on large-scale dynamics. They reviewed how regularization by noise can prevent blow-up, an effect that is sometimes the opposite of that seen in convex integration, while both may have links to spontaneous stochasticity.
Brze\'zniak, Ferrario, Maurelli and Zanella \cite{Brz+Ferrario+Maurelli+Zanella_2025} established, through a Lyapunov functional argument, that sufficiently strong nonlinear noise prevents blow-up for the stochastic nonlinear Schrödinger equation.
Bagnara, Maurelli and Xu \cite{Bagnara+Maurelli+Xu_2025} obtained a similar no blow-up result for the stochastic Euler equations under a multiplicative Itô noise with radial structure and with more than linear growth.
Based on Gess and Yaroslavtsev's work \cite{Gess+Yaroslavtsev_2025}, transport noise can imply global asymptotic stability. Their results confirm that transport noise enables arbitrary large exponential rate of convergence, demonstrating its enhanced dissipation.

These works clearly show that strong nonlinear multiplicative noise can stabilize nonlinear PDEs and prevent singularities. Motivated by these works, the present paper extends this no-blow-up-by-noise phenomenon to the three-dimensional stochastic Navier–Stokes equations on unbounded domains. Using a Lyapunov functional argument similar to \cite{Brz+Ferrario+Maurelli+Zanella_2025}, we prove that sufficiently strong nonlinear multiplicative noise prevents blow-up and yields global strong solutions for initial data in $\mathbb{H}^1$.

\subsection{Main results}
The novelty of the present paper is twofold. First, compared with the existing
stochastic literature, we establish global strong well-posedness for the
three-dimensional stochastic Navier--Stokes equations with nonlinear
multiplicative noise on possibly unbounded domains. In particular, this extends
the bounded-domain result of \cite{Hong+Li+Liu_2024} to a setting where compact
embedding arguments are no longer available. Second, compared with the
deterministic literature, our result can be understood as a regularization (no blow-up) by noise result.

A classical way to obtain
global well-posedness for three-dimensional Navier--Stokes type systems is to
modify the equation by adding an explicit dissipative drift. This leads, for
example, to the tamed Navier--Stokes equations or the
Brinkman--Forchheimer--Navier--Stokes equations, see
\cite{Rockner+Zhang_2009,Brz+Dhariwal_2020,Kinra+Mohan_2022}.
In contrast, in the present work the deterministic Navier--Stokes drift is not
altered. The stabilizing effect comes instead from the It\^o correction
associated with the nonlinear multiplicative noise
\[
\phi(u)u\,\mathrm dW.
\]
Thus the nonlinear noise does not simply add a deterministic damping term to
the equation. Rather, it produces an effective damping mechanism at the level
of the stochastic energy estimates. This distinction is important in our proof
and is one of the main reasons why the It\^o formulation is essential here.

In \autoref{sec-deterministic damping}, we further compare this stochastic
mechanism with the corresponding deterministic damping model
\begin{equation}\label{eq-damped}
\partial_t u+\nu\rA u+B(u)=f(t)-\phi(u)u.
\end{equation}
In the stochastic case, the growth condition is
\[
\vert \phi(u)\vert\geq b\vert \rA^{\frac12}u\vert_\rH^\kappa,
\;\; \kappa>2,\quad b>0.
\]
By contrast, in the
deterministic damping model \eqref{eq-damped}, one needs a stronger growth condition,
\[
\vert\phi(u)\vert\geq b\vert \rA^{\frac12}u\vert_\rH^\kappa,
\;\; \kappa>4,\quad b>0.
\]
The precise comparison is given in \autoref{sec-deterministic damping}.

Our proof proceeds in two main steps. First, we establish the existence of a
unique local maximal strong solution via a fixed-point argument in suitable
Sobolev spaces. Second, we prove that the local maximal solution cannot blow up
in finite time by means of a Lyapunov function argument. Importantly, our method
does not rely on tightness, Skorokhod representation, or compactness arguments.
It therefore provides a flexible approach to nonlinear SPDEs on unbounded
domains.

The rest of the paper is organized as follows. In \autoref{sec-Preliminaries}, we introduce the functional setting and notations used throughout the paper. In \autoref{sec-main results}, we introduce the notions of local mild and global solutions, and state our main results. \autoref{sec-A priori estimates} is devoted to establishing an approximated problem that plays a crucial role in the construction of solutions. In \autoref{sec_Definition and existence of a local solution}, we prove the existence of a local mild solution via a fixed-point argument.
\autoref{subsec-local maximal} extends this to a local maximal solution via a stopping time procedure.
In \autoref{sec-generalization}, we collect several auxiliary results and lemmas that will be used in the proof of the main theorem.
In \autoref{sec-proof of the main result}, we show that the local maximal solution can be continued globally in time.
In \autoref{Examples}, we present several examples of the noise and verifies that the nonlinear noise considered in \cite{Hong+Li+Liu_2024} fulfills our assumptions.
Finally, in \autoref{sec-deterministic damping}, we compare the stochastic mechanism with deterministic nonlinear damping.

\subsection*{Concluding remarks}
Some further developments on this topic can be found in the following papers: \cite{Brz+Peng_2026},   \cite{Brz+Mohan_2026} and \cite{Brz+Ye_2026}.

\section{Preliminaries}\label{sec-Preliminaries}
Assume that $\dom$ is an open subset of $\mathbb{R}^3$ and $1 \leq p \leq  \infty$.
On $\dom$ we consider the  Lebesgue measure  $\Leb$, integration with respect to which will be denoted by $dx = dx_1 dx_2 dx_3$, where $x=(x_1,x_2,x_3)\in\mathbb{R}^3$.

Let $\mathbb{L}^p(\dom)=L^p(\dom, \mathbb{R}^3)$ denote the space of all (equivalence classes) of  $\mathbb{R}^3$-valued functions defined on $\dom$ that are $p$-th power absolutely integrable if $p\in (1,\infty)$ or essentially bounded if $p=\infty$. These are Banach spaces equipped with the following norms
\begin{align}
\Vert u\Vert_{L^p}=\left(\int_{\dom}\vert u(x)\vert^p \,d x\right)^{1 / p},\,\,p\in [1,\infty),
\end{align}
and for $p=\infty$,
\begin{align}
\Vert u\Vert_{L^{\infty}}=\esssup \;\{ \vert u(x)\vert: x \in \dom \}.
\end{align}
In particular, for $p=2$, $\mathbb{L}^2(\dom)$ is a Hilbert space endowed with the scalar product and the norm
\begin{align}
\langle u, \rv\rangle_{L^2}=\int_{\dom} u(x) \rv(x) \,d x,\;\; \Vert u\Vert_{L^2}^2=\langle u, u\rangle_{L^2} ,\;\;u,\rv\in \mathbb{L}^2(\dom).
\end{align}

The Sobolev space $\mathbb{W}^{m, p}(\dom)=W^{m, p}(\dom,\mathbb{R}^3)$, where $m\in\mathbb{N}$ and $1 < p \leq+\infty$,  consists of all elements of  $\mathbb{L}^p(\dom)$,  whose distributional derivatives up to order  $m$ also belong to $\mathbb{L}^p(\dom)$. It is a Banach space with the norm
\begin{align}
\Vert u\Vert_{W^{m, p}}=\left(\sum_{|j| \leq m}\Vert D^j u\Vert_{L^p}^p\right)^{1 / p}.
\end{align}
For $p = 2$, we write $\mathbb{H}^m(\dom) := \mathbb{W}^{m,2}(\dom)$, which is a Hilbert space equipped with the norm $\Vert \cdot \Vert_m$.
In particular, for $m=1$, $\mathbb{H}^1(\dom)$ is a Hilbert space with the scalar product given by
\begin{align}\label{eq-H^1 inner product}
\langle u, \mathrm{v}\rangle_{H^1}:=\langle u, \mathrm{v}\rangle_{L^2}+((u, \mathrm{v})), \quad u, \mathrm{v} \in \mathbb{H}^1\left(\dom\right)
\end{align}
where
\begin{align}\label{eq-((cdot))}
((u, \mathrm{v})):=\langle\nabla u, \nabla \mathrm{v}\rangle_{L^2}=\sum_{i=1}^3 \int_{\dom} \frac{\partial u}{\partial x_i} \cdot \frac{\partial \mathrm{v}}{\partial x_i} \,d x, \quad u, \mathrm{v} \in \mathbb{H}^1\left(\dom\right).
\end{align}
We define the following space of smooth functions with compact support:
\begin{equation}
\begin{aligned}
\mathcal{D}(\dom)&:=\{ u \in \mathcal{C}^{\infty}(\dom,\mathbb{R}^3): u \text{ has compact support in } \dom \}.
\end{aligned}
\end{equation}
The closure of $\mathcal{D}(\dom)$ in $\mathbb{W}^{m, p}(\dom)$ and $\mathbb{H}^m(\dom)$ are defined as follows:
\begin{equation}
\begin{aligned}
\mathbb{W}_0^{m, p}(\dom) &:= \text{the closure of } \mathcal{D}(\dom) \text{ in } \mathbb{W}^{m, p}(\dom),\\
\mathbb{H}_0^m(\dom) &:= \text{the closure of } \mathcal{D}(\dom) \text{ in } \mathbb{H}^m(\dom).
\end{aligned}
\end{equation}
We introduce the following function spaces:
\begin{equation}\label{eq-space H V}
\begin{aligned}
\mathcal{V} &:= \{ u \in \mathcal{D}(\dom): \ \mathrm{div}\, u = 0 \}, \\
\rH &:= \text{the closure of } \mathcal{V} \text{ in } \mathbb{L}^2(\dom), \\
\rV &:= \text{the closure of } \mathcal{V} \text{ in } \mathbb{H}^1(\dom).
\end{aligned}
\end{equation}
One can show that $\rV$  is equal to the closure of  $\mathcal{V}$ in  $\mathbb{H}^1_0(\dom)$.

We equip $\rH$ and $\rV$ with the scalar products and norms inherited from $\mathbb{L}^2(\dom)$ and $\mathbb{H}^1(\dom)$, respectively:
\begin{align}\label{eqn-H-norm}
   & \langle u, \rv\rangle_{\rH}:=\langle u, \rv\rangle_{L^2}, \;\;\vert u\vert_{\rH}:=\Vert u\Vert_{L^2}, \;\; u,\rv \in \rH.
\\
\label{eqn-V-norm}
&\langle u, \rv\rangle_\rV:=\langle u, \rv\rangle_{L^2}+((u, \rv)), \;\;\vert u\vert_\rV^2:=\vert u\vert_{\rH}^2+\Vert \nabla u\Vert_{L^2}^2,\;\; u, \rv \in \rV.
\end{align}

Let $\rA$ be the Stokes operator defined by
\begin{align}
\label{eq-D(A)}
D(\rA)&=\mathbb{H}^2(\dom)\cap \rV,
\\
\label{operator A}
\rA u&=  -P(\Delta u) \in \rH, \mbox{ if } u \in D(\rA),
\end{align}
where
\begin{align}\label{eqn-P}
P: \mathbb{L}^2(\dom)\to \rH
\end{align}
is the orthogonal projection called the Leray-Helmholtz projection. It is well known that $\rA$ is a self-adjoint operator in the Hilbert space $\rH$.

The domain $D(\rA)$ is a Hilbert space when endowed with the graph norm
\begin{align}\label{eqn-D(A)-norm}
\vert u\vert_{D(\rA)}^2:=\Vert u\Vert_{L^2}^2+\vert \rA u\vert_{\rH}^2, \;\; u \in D(\rA).
\end{align}
This norm is equivalent to the $\mathbb{H}^2$-norm, i.e., there exists $C_1>0$ such that
\begin{align}\label{eqn-D(A)-H^2}
    C_1^{-1} \Vert u\Vert_2 \leq \vert u\vert_{D(\rA)} \leq C_1 \Vert u\Vert_2
    , \;\; u \in D(\rA).
\end{align}
The following property is well known.
\begin{prop}
Since $\rA$ is a nonnegative self-adjoint operator on the Hilbert space $\rH$, by the spectral theorem there exist an orthonormal basis
$\{e_k\}_{k\ge1}$ and eigenvalues $\{\lambda_k\}_{k\ge1}\subset[0,\infty)$ such that
$\rA e_k=\lambda_k e_k$.
\end{prop}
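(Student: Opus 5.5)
The plan is to reduce the statement to the spectral theorem for compact self-adjoint operators, applied to the resolvent of $\rA$ rather than to $\rA$ itself. First, nonnegativity: for $u\in D(\rA)$, integration by parts and $u|_{\partial\dom}=0$ give $\langle \rA u,u\rangle_{\rH}=\langle -P\Delta u,u\rangle_{L^2}=\langle -\Delta u,u\rangle_{L^2}=\Vert\nabla u\Vert_{L^2}^2\ge 0$. Here we use that $P$ is an orthogonal projection and $u\in\rH$. Self-adjointness is taken as known, as stated before the proposition. Consequently $I+\rA$ is invertible with bounded inverse $R:=(I+\rA)^{-1}:\rH\to D(\rA)$, and $R$ is self-adjoint, positive, and satisfies $\Vert R\Vert\le 1$.

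The key step is compactness of $R$. Since $R$ maps $\rH$ boundedly into $D(\rA)\subset\rV$, with $\vert Ru\vert_{\rV}^2\le\langle (I+\rA)Ru,Ru\rangle=\langle u,Ru\rangle$, the operator $R$ is compact whenever the embedding $\rV\hookrightarrow\rH$ is compact. By Rellich--Kondrachov this holds when $\dom$ is bounded. Given compactness, the Hilbert--Schmidt spectral theorem yields an orthonormal basis $\{e_k\}$ of $\rH$ with $Re_k=\mu_k e_k$. Because $R$ is injective, every $\mu_k$ lies in $(0,1]$, and $\mu_k\to 0$ if $\rH$ is infinite-dimensional. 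Each $e_k$ lies in the range of $R$, namely $D(\rA)$, and $\rA e_k=\lambda_k e_k$ with $\lambda_k=\mu_k^{-1}-1\ge 0$. This proves the statement in that setting.

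The main obstacle, and a genuine one, is the unbounded case, which is the setting the paper cares about. There $\rV\hookrightarrow\rH$ is not compact. For $\dom=\mathbb{R}^3$ the Stokes operator is unitarily equivalent, via the Fourier transform, to multiplication by $|\xi|^2$ on divergence-free fields, so it has purely continuous spectrum $[0,\infty)$ and no eigenvectors at all. The statement as written therefore cannot be proved for general $\dom$, and its justification ``by the spectral theorem'' is incorrect without a compactness hypothesis. I would prove the bounded-domain version as above and, for general $\dom$, replace it by the form of the spectral theorem that does hold: there is a projection-valued measure $E$ on $[0,\infty)$ with $\rA=\int\lambda\,dE(\lambda)$. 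Equivalently, there is a unitary map onto some $L^2(\Sigma,\mu)$ carrying $\rA$ to multiplication by a nonnegative function. Any later use of $\{e_k\}$, for instance Galerkin-type projections $P_n$, would then be replaced by spectral projections $E([0,n])$. These still commute with $\rA$, are bounded from $\rH$ into $D(\rA^m)$ for every $m$, and converge strongly to the identity, which is all such arguments typically need.
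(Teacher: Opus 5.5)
Your proposal is correct, and it is more accurate than the paper. The paper gives no argument beyond the phrase ``by the spectral theorem'' and declares the property well known. For an arbitrary nonnegative self-adjoint operator, that phrase only yields a projection-valued measure, equivalently a unitary equivalence with a multiplication operator. An orthonormal eigenbasis needs an extra hypothesis, such as compactness of the resolvent.

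Your route supplies exactly this when $\dom$ is bounded. The identity $\vert (I+\rA)^{-1}u\vert_\rV^2=\langle u,(I+\rA)^{-1}u\rangle_\rH$ together with Rellich--Kondrachov for $\rV\subset\mathbb{H}^1_0(\dom)$ makes $(I+\rA)^{-1}$ compact. The Hilbert--Schmidt theorem plus injectivity of the resolvent then gives the eigenbasis with $\lambda_k=\mu_k^{-1}-1\geq 0$.

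Your Fourier computation for $\dom=\mathbb{R}^3$ is also right. An eigenvector would have $\hat u$ supported in the Lebesgue-null sphere $\{\vert\xi\vert^2=\lambda\}$, so $\rA$ has no eigenvalues at all. Theorem~\ref{thm-main}(ii) explicitly treats $\dom=\mathbb{R}^3$, so the proposition, and the eigenbasis definition of $\rA^{1/2}$ that follows it, are false in the paper's own generality. For unbounded Poincar\'e domains your compactness step is in general not available either.

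Your replacement by the functional calculus is the right repair, and it costs the paper nothing. Later sections use only $D(\rA^{1/2})=\rV$, $\vert\rA^{1/2}u\vert_\rH=\Vert\nabla u\Vert_{L^2}$, and $\rA^{1/2}\rA^{1/2}u=\rA u$ for $u\in D(\rA)$. All three follow from $\rA^{1/2}=\int_0^\infty\lambda^{1/2}\,dE(\lambda)$ together with Kato's representation theorem for the closed form $((\cdot,\cdot))$ on $\rV$. The operator associated with that form extends $\rA$, and since both are self-adjoint they coincide.
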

\begin{definition}[Fractional powers of the Stokes operator]
For $u=\sum_{k\ge1} u_k e_k$ with $u_k=\langle u,e_k\rangle_{\rH}$ we define
\begin{equation}
    \begin{aligned}
D(\rA^{1/2}) &:= \Big\{u\in\rH:\ \sum_{k\ge1}\lambda_k\,|u_k|^2<\infty\Big\},
\\
\rA^{1/2}u &:= \sum_{k\ge1} \lambda_k^{1/2} u_k e_k, \;\; u \in D(\rA^{1/2}).
    \end{aligned}
\end{equation}
\end{definition}
It follows easily that if $u \in D(A)$, then $\rA^{1/2}u  \in D(\rA^{1/2})$ and
$\rA^{1/2}(\rA^{1/2}u)=\rA u $.
This implies that
\[
|\rA^{1/2}u|_\rH=\Vert\nabla u\Vert_{L^2(\dom)}\le |u|_\rV, \;\; u \in \rV.
\]
It is known that, see e.g. \cite[page 57]{Teman_1997}
\[
D(\rA^{1/2})=\rV.
\]

Next, for $u,\rv,w$ such that the integral is well-defined, set
\[
b(u, \rv, w)=\int_\dom (u \cdot\nabla) \rv\cdot w \,d x=\sum_{i, j=1}^3 \int_\dom u^i(x) D_i \rv^j(x) w^j(x) \,d x.
\]
An important property of the form $b$ (see \cite[Chapter II, Lemma 1.3]{Temam_1977}) is
\begin{equation}\label{eq-b}
\begin{aligned}
&b(u, \rv, \rv)=0, \mbox{ if } u \in \rV,\; \rv \in \mathbb{H}_0^1(\dom)\cap \mathbb{L}^3(\dom)\\
& b(u, \rv, w)=-b(u, w, \rv), \mbox{ if } u \in \rV,\; \rv, w \in \mathbb{H}_0^1(\dom)\cap \mathbb{L}^3(\dom).
\end{aligned}
\end{equation}

If $(u,\rv) \in \rH\times \rH$ is such that $\rV \ni w\mapsto b(u,\rv,w)\in \mathbb{R}$ is continuous, we denote the corresponding element of $\rV^\prime$ by $B(u,\rv)$, i.e.
\[
\langle B(u,\rv),w\rangle_{\rV^\prime,\rV}=b(u,\rv,w),\;\;w\in \rV.
\]
The set of all such pairs will be denoted by $D(B)$. In view of \eqref{eq-b}, we infer
\begin{align}\label{eq-<B(u),u>}
\langle B(u,\rv),\rv\rangle =0 \mbox{ if } u \in \rH,\; \rv \in \mathbb{H}_0^1(\dom)\cap \mathbb{L}^3(\dom).
\end{align}
We also write $B(u):=B(u,u)$. If  $(u,\rv) \in \rH\times \rH$ is  that $(u\cdot\nabla)\rv\in \mathbb{L}^2(\dom)$, then
$(u,\rv) \in D(B)$ and
\begin{align}\label{eqn-B}
B(u):=P(u\cdot \nabla)u.
\end{align}
In particular, if $u\in \rH$ satisfies $(u\cdot\nabla)u\in \mathbb L^2(\dom)$, then
\[
B(u)=P\big((u\cdot \nabla)u\big).
\]

We will recall the fundamental properties of the form $B$. Note that
\begin{align}\label{eqn-B-H norm L2}
\vert B(u)\vert_\rH\leq \Vert (u\cdot \nabla)u\Vert_{L^2}.
\end{align}

In the following, we introduce some function spaces used throughout the paper.
\begin{definition}
\label{def-X_a,b}
For $b>a \geq 0$, we define the Banach space $X_{a, b}$ by
\begin{equation}\label{defnition X_a,b}
 \begin{aligned}
X_{a, b} & :=L^2(a, b ; D(\rA)) \cap C([a, b] ; \rV).
\end{aligned}
\end{equation}
The norm in $X_{a, b}$ will be denoted by
\begin{align}\label{norm X_a,b}
|u|_{X_{a, b}}^2 & =\sup _{t \in[a, b]}\vert u(t) \vert_\rV^2+\int_a^b|u(t)|_{D(\rA)}^2 \,d t.
\end{align}
For $a=0$ and $b=T>0$, we write $X_T:=X_{0, T}$, i.e.
\[
X_T:=L^2(0,T;D(\rA))\cap C([0,T];\rV).
\]
\end{definition}
and
\begin{align}\label{norm X_T}
\vert u\vert_{X_T}^2 & =\sup _{t \in[0, T]}\vert u(t) \vert_\rV^2+\int_0^T\vert u(t)\vert_{D(\rA)}^2 \,d t.
\end{align}
Note that if $u \in X_T$, then the map $[0, T] \ni t \mapsto \vert u\vert_{X_t}$ is non-decreasing function.
Moreover, for every $t\in[0,T]$ and $u\in X_T$, we infer
\begin{align}\label{eqn-A1/2u X_t}
\vert \rA^\frac{1}{2}u(t)\vert_\rH^2\leq \vert u(t) \vert_\rV^2\leq \vert u\vert_{X_T}^2\;\;\mbox{ and  }\int_0^t\vert u(t)\vert_{D(\rA)}^2 \,d t\leq \vert u\vert_{X_T}^2.
\end{align}

We make the following assumption about the probabilistic part of our settings.
\begin{assumption}\label{ass-prob}
We assume that  \[\left(\Omega,\mathscr{F},\mathbb{F},\mathbb{P}\right)\] is a complete probability space endowed with a filtration $\mathbb{F} := \{\mathscr{F}_t\}_{t\geq 0}$ satisfying the usual
conditions, i.e., the filtration is right continuous and all null sets of $\mathscr{F}$ are elements of $ \mathscr{F}_0$.

$W=\{W(t): t \in [0,\infty)\}$  is a $\mathbb{R}$-valued Wiener process defined on a complete filtered probability space $\left(\Omega,\mathscr{F},\mathbb{F},\mathbb{P}\right)$, where $\mathbb{F}=\{\mathscr{F}_t\}_{t\geq 0}$.
\end{assumption}
\begin{definition}
\label{def-M^2X_T}
Denote by $\mathbb{M}^2\left(X_T\right)$, the separable Banach space of all (equivalence classes of) $D(\rA)$-valued $\mathbb{F}$-progressively measurable processes $u$ whose trajectories almost surely belong to $X_T$, and satisfying
\begin{align}\label{M^2 X_T}
\vert u\vert_{\mathbb{M}^2\left(X_T\right)}^2=\mathbb{E}\left(\vert u\vert_{X_T}^2\right)=\mathbb{E}\left(\sup _{t \in[0, T]}\vert u(t) \vert_\rV^2+\int_0^T\vert u(t)\vert_{D(\rA)}^2\, d t\right)<\infty.
\end{align}
Similarly, by $\mathbb{M}^2(0, T ; \rV)$, we denote the separable Banach space of all (equivalence classes of) $\rV$-valued $\mathbb{F}$-progressively measurable processes $u$ whose trajectories almost surely belong to $L^2(0, T ; \rV)$, and such that
\begin{align}\label{M^2(0,T;V)}
\vert u\vert_{\mathbb{M}^2(0, T ; \rV)}^2=\mathbb{E}\left(\Vert u\Vert_{L^2(0, T ; \rV)}^2\right)=\mathbb{E}\left(\int_0^T\vert u(t) \vert_\rV^2 d t\right)<\infty.
\end{align}
\end{definition}
\subsection{Poincar\'e Domain}\label{subsec-Poincar\'e Domain}
If $\dom$ is a bounded domain with smooth boundary or, more generally, a Poincar\'e domain (i.e., an unbounded domain satisfying the Poincar\'e condition, see e.g. \cite{Brz+Li_2006}), then it is known that
the operator $\rA$ is an isomorphism between $D(\rA)$ and $\rH$. In particular, there exists a constant $C_2>0$ such that
\begin{align}\label{eqn-Poincar\'e}
    C_2^{-1} \vert \rA u\vert_{\rH} \leq \vert u\vert_{D(\rA)} \leq C_2 \vert \rA u\vert_{\rH}
    , \;\; u \in D(\rA).
\end{align}
Equivalently, there exists a constant $C_3>0$ such that
\begin{align}\label{eqn-Poincar\'e-0}
    C_3^{-1} \Vert  u\Vert_2 \leq \vert \rA u\vert_{\rH} \leq C_3 \Vert u \Vert_2
    , \;\; u \in D(\rA).
\end{align}

Moreover, there exists a constant $C_4>0$ such that
\begin{align}\label{eqn-Poincar\'e-1}
C_4^{-1}\vert u\vert_\rV \leq \Vert \nabla u\Vert_{L^2}\leq C_4\vert u\vert_\rV  , \;\; u \in \rV.
\end{align}

In view of \eqref{eqn-Poincar\'e} and \eqref{eqn-Poincar\'e-1}, the $X_{a,b}$ norm defined in \eqref{norm X_a,b} is equivalent to $\vert \cdot\vert_{X_T^0}$, where $\vert \cdot\vert_{X_T^0}$ is defined by
\begin{align}\label{eq-X_T^0}
\vert u\vert_{X_T^0}^2:=
\sup _{t \in[0, T]}\vert \rA^\frac{1}{2}u(t)\vert_\rH^2 +\int_0^{T}\vert \rA u(t)\vert_{\rH}^2 \, d t,\;\;T>0.
\end{align}

\subsection{Inequalities}\label{subsec-Inequalities}
In the following, we state some useful variant of the classical Agmon inequality, before giving some estimates for $B$. It will also be used in \autoref{sec-generalization}.
\begin{lemma}\label{lem-Agmon}
Assume that one of the following two assumptions is satisfied.
\begin{trivlist}
\item[(i)] $\dom$ is a bounded domain or an unbounded domain satisfying the Poincar\'e condition;
\item[(ii)] $\dom=\mathbb{R}^3$.
\end{trivlist}
Then, there exists $C>0$ such
\begin{equation}\label{eqn-the Agmon inequality}
\Vert u\Vert_{L^{\infty}} \leq C\Vert \nabla u\Vert_{L^2}^\frac12\vert \rA u\vert_{\rH}^\frac12, \;\; u\in D(\rA).
\end{equation}
\end{lemma}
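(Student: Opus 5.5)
We prove the Agmon-type inequality in two stages: first for $u\in\mathbb{H}^2(\mathbb{R}^3)$ via Fourier analysis, and then for general $\dom$ by extension and the elliptic regularity of the Stokes operator.

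\textbf{Whole space.} For $u\in\mathbb{H}^2(\mathbb{R}^3)$ we bound $\Vert u\Vert_{L^\infty}\le (2\pi)^{-3}\int|\hat u(\xi)|\,d\xi$ and split the integral at a radius $R>0$. By Cauchy--Schwarz,
\[
\int_{|\xi|\le R}|\hat u|\,d\xi\le \Big(\int_{|\xi|\le R}|\xi|^{-2}d\xi\Big)^{1/2}\Vert |\xi|\hat u\Vert_{L^2}\le C R^{1/2}\Vert\nabla u\Vert_{L^2},
\]
\[
\int_{|\xi|>R}|\hat u|\,d\xi\le \Big(\int_{|\xi|>R}|\xi|^{-4}d\xi\Big)^{1/2}\Vert |\xi|^2\hat u\Vert_{L^2}\le C R^{-1/2}\Vert D^2u\Vert_{L^2}.
\]
Choosing $R=\Vert D^2u\Vert_{L^2}/\Vert\nabla u\Vert_{L^2}$ gives
\[
\Vert u\Vert_{L^\infty}\le C\Vert\nabla u\Vert_{L^2}^{1/2}\Vert D^2u\Vert_{L^2}^{1/2}.
\]
In case (ii), $D(\rA)=\mathbb{H}^2\cap\rV$. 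Since $u$ is divergence free, $\Delta u$ is too, so $P\Delta u=\Delta u$ and $|\rA u|_\rH=\Vert\Delta u\Vert_{L^2}=\Vert D^2u\Vert_{L^2}$ by Plancherel. This settles (ii).

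\textbf{Domains in case (i).} Take a bounded linear extension operator $E$ (Stein's, or one obtained by reflection in a tubular neighbourhood of the smooth boundary). It should map $\mathbb{H}^1(\dom)\to\mathbb{H}^1(\mathbb{R}^3)$ and $\mathbb{H}^2(\dom)\to\mathbb{H}^2(\mathbb{R}^3)$ simultaneously; for unbounded $\dom$ this needs the boundary to be uniformly smooth. The whole-space bound applied to $Eu$ gives
\[
\Vert u\Vert_{L^\infty(\dom)}\le C\Vert u\Vert_{1}^{1/2}\Vert u\Vert_{2}^{1/2}.
\]
It remains to replace the full Sobolev norms by homogeneous ones:
\begin{itemize}
\item For $u\in\rV$, \eqref{eqn-Poincar\'e-1} gives $\Vert u\Vert_1\le C|u|_\rV\le C\Vert\nabla u\Vert_{L^2}$.
\item For $u\in D(\rA)$, \eqref{eqn-Poincar\'e-0} gives $\Vert u\Vert_2\le C_3|\rA u|_\rH$.
\end{itemize}
Substituting both yields \eqref{eqn-the Agmon inequality}.

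\textbf{Main obstacle.} The delicate point is the extension step for unbounded $\dom$: we need one bounded extension operator acting on both $\mathbb{H}^1$ and $\mathbb{H}^2$, which requires a uniform $C^2$ boundary. If that is unavailable, a fallback is to localize with a partition of unity subordinate to boundary charts of uniformly bounded geometry and apply the half-space reflection in each chart. The bounds \eqref{eqn-Poincar\'e-0} and \eqref{eqn-Poincar\'e-1} are taken as given for Poincar\'e domains, as stated in Subsection~\ref{subsec-Poincar\'e Domain}, and they absorb the lower-order terms. We rely on the multiplicative structure throughout, rather than an additive interpolation that would lose the exact exponents $\tfrac12,\tfrac12$.
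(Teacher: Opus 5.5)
Your proof is correct. For case (i) it matches the paper. The paper cites the classical Agmon inequality $\Vert u\Vert_{L^\infty}\le C\Vert u\Vert_1^{1/2}\Vert u\Vert_2^{1/2}$ on $\mathbb{H}^2(\dom)$ and then makes exactly your two substitutions, $\Vert u\Vert_1\le C_4\Vert\nabla u\Vert_{L^2}$ and $\Vert u\Vert_2\le C_3|\rA u|_\rH$. You only add a proof of the cited inequality by extending to $\mathbb{R}^3$.

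The genuine difference is in case (ii). The paper proves $\Vert u\Vert_{L^\infty}\le C\Vert\Delta u\Vert_{L^2}^{1/2}\Vert\nabla u\Vert_{L^2}^{1/2}$ in three steps:
\begin{enumerate}
\item the Gagliardo--Nirenberg bound $\Vert u\Vert_{L^\infty}\le C\Vert\Delta u\Vert_{L^2}^{1/2}\Vert u\Vert_{L^6}^{1/2}$ (Friedman);
\item the Sobolev embedding $\Vert u\Vert_{L^6}\le C\Vert\nabla u\Vert_{L^2}$, both for $C_0^\infty$ functions;
\item a separate density argument (Cauchy sequences in $\mathbb{L}^\infty$ and a.e.\ identification of limits) to reach all of $\mathbb{H}^2(\mathbb{R}^3)$.
\end{enumerate}
Your Fourier splitting is self-contained and applies directly to every $u\in\mathbb{H}^2(\mathbb{R}^3)$. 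It shows $\hat u\in L^1$, so no approximation step is needed. The paper's route is shorter on the page because it delegates the analysis to two cited inequalities.

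Your write-up also makes explicit two points the paper leaves tacit. First, the identity $|\rA u|_\rH=\Vert\Delta u\Vert_{L^2}$ on $\mathbb{R}^3$ holds because $P\Delta u=\Delta u$ for divergence-free $u$; the paper uses this silently in its final line. Second, for unbounded Poincar\'e domains the classical inequality needs an extension operator bounded on both $\mathbb{H}^1$ and $\mathbb{H}^2$, i.e.\ a uniformly regular boundary. That hypothesis is hidden inside the paper's citation.

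One trivial detail: your choice $R=\Vert D^2u\Vert_{L^2}/\Vert\nabla u\Vert_{L^2}$ presupposes $\nabla u\neq 0$. But $\nabla u=0$ with $u\in\mathbb{L}^2(\mathbb{R}^3)$ forces $u=0$, so nothing is lost.
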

\begin{proof}[Proof of \autoref{lem-Agmon} part (i)]
By the classical Agmon inequality, see  \cite[chapter II, (1.40)]{Temam_1997} or  \cite[Theorem 1.20]{RRS}, there exists $C>0$ such that
 \begin{equation}\label{eqn-the Agmon inequality-2}
\Vert u\Vert_{L^{\infty}} \leq C\Vert u\Vert_1^{1/2}\Vert u\Vert_2^{1/2}, \;\; u\in \mathbb{H}^2(\dom).
\end{equation}
In view of \eqref{eqn-Poincar\'e-0} and \eqref{eqn-Poincar\'e-1},
 \[
\Vert u\Vert_1 \leq C_4\Vert \nabla u\Vert_{L^2}, \;\; u \in \rV,
  \]
and
\[
\Vert u\Vert_2 \leq C_3 \vert \rA u\vert_{\rH}, \;\; u \in D(\rA).
\]
Since $D(\rA)= \mathbb{H}^2(\dom)\cap \rV$, substituting these bounds into
\eqref{eqn-the Agmon inequality-2} yields \eqref{eqn-the Agmon inequality}. The proof of (i) of \autoref{lem-Agmon} is complete.
\end{proof}

\begin{proof}[Proof of \autoref{lem-Agmon} part (ii)]
Firstly, we aim to show that for $u\in \mathbb{H}^2(\mathbb{R}^3)$,
\begin{align}\label{eqn-agmon unbounded}
\Vert u\Vert_{L^\infty}\leq C\Vert \Delta u\Vert_{L^2}^\frac{1}{2}\Vert \nabla u\Vert_{L^2}^\frac{1}{2}.
\end{align}
Using the Sobolev
inequality \cite[Theorem 4.31]{Adams+Fournier_2003} and  \cite[Theorem I.9.3]{Friedman_1969},
there exists a constant $C>0$ such that
for every $\tilde{u}\in C_0^\infty(\mathbb{R}^3)$ we have
\begin{align}
\label{eqn-Theorem I.9.3-Friedman_1969}
\Vert \tilde{u}\Vert_{L^\infty}&\leq C\Vert \Delta \tilde{u}\Vert_{L^2}^\frac{1}{2}\Vert \tilde{u}\Vert_{L^6}^\frac{1}{2},
\\
\label{eqn-Sobolev embedding}
\Vert \tilde{u}\Vert_{L^6}& \leq C\Vert \nabla \tilde{u}\Vert_{L^2}.
\end{align}
Hence, we infer that
\begin{align}\label{eqn-agmon unbounded-C}
\Vert \tilde{u}\Vert_{L^\infty}\leq C\Vert \Delta \tilde{u}\Vert_{L^2}^\frac{1}{2}\Vert \nabla \tilde{u}\Vert_{L^2}^\frac{1}{2},\;\;\tilde{u}\in C_0^\infty(\mathbb{R}^3).
\end{align}
Now, we need to extend it
for every $u\in \mathbb{H}^2(\mathbb{R}^3)$.

For this purpose let us choose and fix $u\in \mathbb{H}^2(\mathbb{R}^3)$.
Since $C_0^\infty(\mathbb{R}^3)$ is dense in $\mathbb{H}^2(\mathbb{R}^3)$, there exists a sequence $ \{\tilde{u}_n\} \subset C_0^\infty(\mathbb{R}^3) $ such that
\begin{align}\label{lim-nabla u_n& Delta u_n}
\Vert  \tilde{u}_n -  u \Vert_{L^2} \to 0, \;\;\Vert \nabla \tilde{u}_n - \nabla u \Vert_{L^2} \to 0 \text{ and } \Vert \Delta \tilde{u}_n - \Delta u \Vert_{L^2} \to 0,  \text{ as } n \to \infty.
\end{align}
It follows that there exists a subsequence still denoted by $\{\tilde{u}_{n}\}$ such that
\begin{align}\label{lim-u_n to u}
\tilde{u}_n\to u,\;\; \text{ for almost every }x\in \mathbb{R}^3.
\end{align}

In view of \eqref{eqn-agmon unbounded-C} and \eqref{lim-nabla u_n& Delta u_n}, we infer that
\[
\Vert \tilde{u}_n - \tilde{u}_m \Vert_{L^\infty}\leq C\Vert \Delta \tilde{u}_n -\Delta \tilde{u}_m \Vert_{L^2}^\frac{1}{2}\Vert \nabla \tilde{u}_n - \nabla \tilde{u}_m \Vert_{L^2}^\frac{1}{2}\to 0, \mbox{ as } n,m\to \infty.
\]
which implies $\{\tilde{u}_n\}$ is a Cauchy sequence in $\mathbb{L}^\infty(\mathbb{R}^3)$.
Therefore, there exists a unique $\rv \in \mathbb{L}^\infty(\mathbb{R}^3)$ such that
\[
\tilde{u}_n \to \rv \mbox{ in } \mathbb{L}^\infty(\mathbb{R}^3).
\]
It follows that there exists a subsequence still denoted by $\{\tilde{u}_{n}\}$ such that
\begin{align}\label{lim-u_n to v1}
\tilde{u}_n\to \rv,\;\; \text{ for almost every }x\in \mathbb{R}^3.
\end{align}
Together with \eqref{lim-u_n to u}, we deduce
\[ u(x) = \rv(x),\;\; \text{ for almost every }x\in \mathbb{R}^3. \]
Therefore, using the continuity of the norm functions and inequality \eqref{eqn-agmon unbounded-C}, we infer that for every $u \in \mathbb{H}^2(\mathbb{R}^3)$
\begin{align}\label{eqn-agmon unbounded1}
\Vert u\Vert_{L^\infty}=\lim_{n\to \infty}\Vert \tilde{u}_n\Vert_{L^\infty}\leq C\lim_{n\to \infty}\Vert \Delta \tilde{u}_n\Vert_{L^2}^\frac{1}{2}\Vert \nabla \tilde{u}_n \Vert_{L^2}^\frac{1}{2}=C \Vert \Delta u\Vert_{L^2}^\frac{1}{2}\Vert \nabla u \Vert_{L^2}^\frac{1}{2},
\end{align}
which implies inequality \eqref{eqn-agmon unbounded} holds for every $u\in \mathbb{H}^2(\mathbb{R}^3)$.

Note that $D(\rA)\subset \mathbb{H}^2(\mathbb{R}^3)$. Therefore, \eqref{eqn-agmon unbounded} implies \eqref{eqn-the Agmon inequality} holds. The proof of (ii) of \autoref{lem-Agmon} is complete.
\end{proof}

The following lemma is a direct consequence of \autoref{lem-Agmon}.
\begin{cor}\label{cor-Agmon}
There exists $C>0$ such that
\begin{align}\label{eqn-the Agmon inequality-1}
\Vert u\Vert_{L^{\infty}} \leq C\vert  u\vert_\rV^{1/2}\vert u\vert_{D(\rA)}^{1/2}, \;\; u\in D(\rA).
\end{align}
\end{cor}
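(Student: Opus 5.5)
We will deduce \autoref{cor-Agmon} directly from \autoref{lem-Agmon}, under the same standing alternatives (i) or (ii) on $\dom$. The plan is to bound each of the two factors on the right-hand side of \eqref{eqn-the Agmon inequality} by the corresponding norm appearing in \eqref{eqn-the Agmon inequality-1}, and then multiply.

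First, for $u\in D(\rA)\subset\rV$, the definition \eqref{eqn-V-norm} of the $\rV$-norm gives
\[
\Vert \nabla u\Vert_{L^2}^2\le \vert u\vert_\rH^2+\Vert\nabla u\Vert_{L^2}^2=\vert u\vert_\rV^2,
\]
so that $\Vert\nabla u\Vert_{L^2}^{1/2}\le \vert u\vert_\rV^{1/2}$.

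Second, the graph norm \eqref{eqn-D(A)-norm} gives
\[
\vert \rA u\vert_\rH^2\le \Vert u\Vert_{L^2}^2+\vert\rA u\vert_\rH^2=\vert u\vert_{D(\rA)}^2,
\]
so that $\vert\rA u\vert_\rH^{1/2}\le \vert u\vert_{D(\rA)}^{1/2}$.

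Substituting both bounds into \eqref{eqn-the Agmon inequality} yields \eqref{eqn-the Agmon inequality-1} with the same constant $C$ as in \autoref{lem-Agmon}.

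There is no real obstacle here. The only point to note is that the direction of the comparisons is the favourable one: we use only the trivial inequalities ``seminorm $\le$ full norm''. In particular, we do not need the Poincar\'e-type equivalences \eqref{eqn-Poincar\'e} or \eqref{eqn-Poincar\'e-1}, so the argument works equally for $\dom=\mathbb{R}^3$, where those equivalences fail.
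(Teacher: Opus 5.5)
Your proposal is correct and follows the paper's proof exactly: the paper also bounds $\Vert\nabla u\Vert_{L^2}\le\vert u\vert_\rV$ and $\vert\rA u\vert_\rH\le\vert u\vert_{D(\rA)}$ directly from the norm definitions and substitutes them into \autoref{lem-Agmon}. Your remark that no Poincar\'e-type equivalence is needed, so the argument also covers $\dom=\mathbb{R}^3$, is also accurate.
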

\begin{proof}
In view of the definitions of $\vert \cdot \vert_\rV$ and $\vert \cdot \vert _{D(\rA)}$ given in \eqref{eqn-V-norm} and \eqref{eqn-D(A)-norm}, respectively, we infer
\begin{align}\label{eqn-nabla u-V}
\Vert \nabla u\Vert_{L^2}\leq \vert u \vert_\rV
\end{align}
and
\begin{align}\label{eqn-A u-DA}
\vert \rA u\vert_{\rH}\leq \vert u \vert _{D(\rA)}.
\end{align}
Therefore, the Agmon inequality \autoref{lem-Agmon} implies
\begin{align}\label{eqn-the Agmon inequality-3}
\Vert u\Vert_{L^{\infty}} \leq C\Vert \nabla u\Vert_{L^2}^{1/2}\vert \rA u\vert_{\rH}^{1/2}\leq C\vert  u\vert_\rV^{1/2}\vert u\vert_{D(\rA)}^{1/2}, \;\; u\in D(\rA).
\end{align}
The proof is complete.
\end{proof}

Based on the Agmon inequality, we establish the following estimates for the nonlinear term $B$. While the first inequality can be found in \cite{Temam_1997}, we provide a proof here for completeness.
\begin{lemma}\label{lem-inequality} There exists $C>0$ such that for every $u \in D(\rA)$,
\begin{equation}\label{eqn-Fu-0}
\begin{aligned}
 \vert B(u)\vert_\rH &\leq  C \vert u \vert _{D(\rA)}^{1/2}\vert u \vert_\rV^{3/2}.
\end{aligned}
\end{equation}
There exists $C>0$ such that for every $u, \rv \in D(\rA)$,
\begin{equation}\label{eqn-Fu-Fv-0}
\begin{aligned}
\vert B(u)-B(\rv)\vert_\rH &\leq C(\vert u \vert _{D(\rA)}^{1/2}\vert u \vert_\rV^{1/2}\vert u-\rv \vert _\rV+\vert u-\rv \vert _{D(\rA)}^{1/2}\vert u-\rv \vert _\rV^{1/2}\vert \rv \vert_\rV).
\end{aligned}
\end{equation}
\end{lemma}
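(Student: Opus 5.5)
We reduce both inequalities to the pointwise bound $|(u\cdot\nabla)\rv|\le |u|\,|\nabla \rv|$, Hölder's inequality with the $L^\infty$ norm on the first factor, and \autoref{cor-Agmon}.

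For \eqref{eqn-Fu-0}, start from \eqref{eqn-B-H norm L2}:
\[
\vert B(u)\vert_\rH\le \Vert (u\cdot\nabla)u\Vert_{L^2}\le \Vert u\Vert_{L^\infty}\Vert \nabla u\Vert_{L^2}.
\]
This requires $(u\cdot\nabla)u\in\mathbb{L}^2(\dom)$. That holds for $u\in D(\rA)$, because $u\in\mathbb{L}^\infty$ by \autoref{lem-Agmon} and $\nabla u\in\mathbb{L}^2$, so $B(u)=P((u\cdot\nabla)u)$ is well defined. Next, \eqref{eqn-the Agmon inequality-1} gives $\Vert u\Vert_{L^\infty}\le C\vert u\vert_\rV^{1/2}\vert u\vert_{D(\rA)}^{1/2}$, and \eqref{eqn-nabla u-V} gives $\Vert\nabla u\Vert_{L^2}\le \vert u\vert_\rV$. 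Multiplying these yields $C\vert u\vert_{D(\rA)}^{1/2}\vert u\vert_\rV^{3/2}$.

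For \eqref{eqn-Fu-Fv-0}, write $w=u-\rv$ and use bilinearity of $B$ (the projection $P$ is linear):
\[
B(u)-B(\rv)=B(u,u)-B(\rv,\rv)=B(u,w)+B(w,\rv).
\]
Each term is handled as before by putting the $L^\infty$ norm on the first argument. For the first term,
\[
\vert B(u,w)\vert_\rH\le \Vert u\Vert_{L^\infty}\Vert\nabla w\Vert_{L^2}\le C\vert u\vert_{D(\rA)}^{1/2}\vert u\vert_\rV^{1/2}\vert w\vert_\rV .
\]
For the second term, with $w\in D(\rA)$,
\[
\vert B(w,\rv)\vert_\rH\le \Vert w\Vert_{L^\infty}\Vert\nabla \rv\Vert_{L^2}\le C\vert w\vert_{D(\rA)}^{1/2}\vert w\vert_\rV^{1/2}\vert \rv\vert_\rV .
\]
Adding the two bounds gives exactly the right-hand side of \eqref{eqn-Fu-Fv-0}.

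No step is a real obstacle. The only point needing care is the choice of splitting, $B(u,w)+B(w,\rv)$ rather than $B(w,u)+B(\rv,w)$, since it is this choice that matches the stated form. One should also check that the $L^\infty$ bound for $w$ is legitimate: \autoref{cor-Agmon} applies to $w=u-\rv\in D(\rA)$ because $D(\rA)$ is a linear space.
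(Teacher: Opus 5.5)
Your proposal is correct and follows the paper's proof essentially verbatim. The paper also bounds $\vert B(u)\vert_\rH\le\Vert u\Vert_{L^\infty}\Vert\nabla u\Vert_{L^2}$ and then applies \autoref{cor-Agmon}. For the difference it uses the same splitting $(u\cdot\nabla)(u-\rv)+((u-\rv)\cdot\nabla)\rv$, with the $L^\infty$ norm placed on the first factor of each term. The only cosmetic difference is the order of two steps: the paper bounds the projection of the whole difference by $\Vert (u\cdot\nabla)u-(\rv\cdot\nabla)\rv\Vert_{L^2}$ before splitting, whereas you split into $B(u,w)+B(w,\rv)$ first and bound each projected term separately.
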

\begin{proof}
Using the Agmon inequality \autoref{cor-Agmon}, inequalities \eqref{eqn-B-H norm L2} and \eqref{eqn-V-norm} we infer that for every $u \in D(\rA)$,
\begin{equation}\label{eqn-Fu}
\begin{aligned}
 \vert B(u)\vert_\rH &\leq \Vert u\cdot \nabla u\Vert_{L^2} \leq \Vert u \Vert _{L^\infty}\Vert \nabla u\Vert_{L^2} \\
 &\leq C\vert u \vert _{D(\rA)}^{1/2}\vert u \vert_\rV^{1/2}\Vert \nabla u\Vert_{L^2}\leq C \vert u \vert _{D(\rA)}^{1/2}\vert u \vert_\rV^{3/2},
\end{aligned}
\end{equation}
which implies inequality \eqref{eqn-Fu-0} holds.

Using the triangle inequality, the Agmon inequality \autoref{cor-Agmon}, inequalities \eqref{eqn-B-H norm L2} and \eqref{eqn-V-norm} again, we infer that for every $u, \rv \in D(\rA)$,
\begin{equation}\label{eqn-Fu-Fv}
\begin{aligned}
\vert B(u)-B(\rv)\vert_\rH &\leq \Vert u\cdot \nabla u - \rv\cdot \nabla \rv\Vert_{L^2}=\Vert u\cdot \nabla (u - \rv)+(u - \rv)\cdot \nabla \rv\Vert_{L^2}  \\
&\leq \Vert u\Vert_{L^\infty}\Vert \nabla(u - \rv)\Vert_{L^2}+\Vert u-\rv\Vert_{L^\infty}\Vert \nabla \rv\Vert_{L^2}  \\
&\leq C\vert u \vert _{D(\rA)}^{1/2}\vert u \vert_\rV^{1/2}\Vert \nabla(u - \rv)\Vert_{L^2}+C\vert u-\rv \vert _{D(\rA)}^{1/2}\vert u-\rv \vert_\rV^{1/2}\Vert \nabla \rv\Vert_{L^2}  \\
&\leq C (\vert u \vert _{D(\rA)}^{1/2}\vert u \vert_\rV^{1/2}\vert u-\rv \vert_\rV+\vert u-\rv \vert _{D(\rA)}^{1/2}\vert u-\rv \vert_\rV^{1/2}\vert \rv \vert_\rV).
\end{aligned}
\end{equation}
We complete the proof of \autoref{lem-inequality}.
\end{proof}

\section{Main results}\label{sec-main results}

\subsection{Abstract Formulation}
We study the stochastic Navier–Stokes equations \eqref{eqn-NSE-01'} on a domain $\dom\subset\mathbb{R}^3$.  Without loss of generality we assume the viscosity
\begin{equation}\label{eqn-nu=1}
\nun=1.
\end{equation}
By applying the Leray–Helmholtz projection $P$ to both sides of \eqref{eqn-NSE-01'}, and using the Stokes operator $\rA: D(\rA)\to \rH$ and the bilinear term $B(u):=P\big[(u\cdot\nabla)u\big]$, this equation can be written in the following abstract form:
\begin{equation}
\begin{aligned}\label{eqn-NS03}
du(t)&+\big[\rA u(t)+B(u(t))\big]\,dt=g(u(t))  \,dW(t), \;\; t \in[0, \infty) ,
\\
u(0)&=u_0,
\end{aligned}
\end{equation}
where,
\begin{equation}\label{eqn-g}
g: \rV \ni u \mapsto \phi(u) u  \in \rV,
\end{equation}
and
\begin{equation}\label{eqn-phi}
\phi: \rV \to  \mathbb{R}
\end{equation}
which satisfies the following assumptions.

We impose the following assumptions on the noise term, where \autoref{ass-g-loc Lip} will be used for the local existence, \autoref{assumption-phi} and \autoref{assumption-phi-1} will be used for the global existence.

\begin{assumption}\label{ass-g-loc Lip}
For every $R>0$,  there exists a constant $C_6:=C_6(R)>0$ such that
for all $u,\rv \in V$, if $ \vert u\vert_{\rV} \leq R$ and $ \vert \rv\vert_{\rV} \leq R$, then
\begin{align}\label{eqn-g Lipschitz-100}
 \vert g(u)-g(\rv) \vert_\rV  \leq C_6     \vert u-\rv\vert_\rV.
\end{align}
\end{assumption}

\begin{assumption}\label{assumption-phi}
\begin{trivlist}
\item[(i)]
There exist constants $\kappa>2$ and $b>0$ or $\kappa=2$ with $b$ sufficiently large such that,
\begin{align}\label{eqn-A1}
\vert \phi(u)  \vert \geq b\vert \rA^\frac{1}{2}u\vert_\rH^\kappa \mbox{ for every $u\in \rV$ }
\end{align}

\item[(ii)] For every $T>0$ and $R>0$,
if $u \in X_T$ satisfies $\vert u \vert_{X_T} \leq R$, then there exists a constant $C:=C(R)>0$ such that

\begin{equation}\label{eqn-A3}
\begin{aligned}
\sup_{t\in [0,T]}\vert \phi(u(t))\vert &\leq C.
\end{aligned}
\end{equation}
\end{trivlist}
\end{assumption}
For $\dom=\mathbb{R}^3$, we further need the following assumption:
\begin{assumption}\label{assumption-phi-1}
There exists a continuous increasing  function $\Gamma:[0,\infty)\to \mathbb{R}$ such that
\begin{equation}\label{eqn-lim fx=infty}
\lim_{x\to \infty} \Gamma(x)= \infty,
    \end{equation}
and for every $u\in X_T$, the function $\phi$ satisfies
\begin{align}
\sup_{t\in [0,T]}\vert\phi(u(t))\vert^2\leq \Gamma(\vert u\vert_{X_T^0}^2),
\end{align}
where $\vert  \cdot \vert_{X_T^0}^2$ is  defined in \eqref{eq-X_T^0}.
\end{assumption}
\begin{remark}
The function $\phi$ is defined on
$\rV$, not on the path space $X_T$. For $u\in X_T$, we have
$u(t)\in \rV$ for every $t\in[0,T]$. \autoref{assumption-phi-1} means that
$\sup_{t\in[0,T]}\vert \phi(u(t))\vert$ can be controlled by
$\vert u\vert_{X_T^0}$.
\end{remark}

\subsection{Definitions  of local, local maximal and global  solutions}

\begin{definition}[Admissible process]\label{def-admissible}
Assume that $X$ is a topological space and $\tau$ is a stopping time. An $X$-valued  local process $u:[0, \tau) \times \Omega \rightarrow X$, where
\begin{align*}
[0, \tau) \times \Omega&:= \left\{ (t,\omega) \in [0,\infty) \times \Omega: 0\leq t< \tau(\omega) \right\},
\\
\Omega_t(\tau)&:= \left\{ \omega \in  \Omega: t< \tau(\omega) \right\}, \;\; t\geq 0,
\end{align*}
is called admissible if and only if  the following conditions are satisfied:
\begin{trivlist}
\item[(i)] it is $\mathbb{F}$-adapted, i.e. $u\vert_{\Omega_t(\tau)}: \Omega_t(\tau) \rightarrow X$ is $\mathscr{F}_t$-measurable, for all $t \geq 0$;
\item[(ii)] for almost all $\omega \in \Omega$, the function $[0, \tau(\omega)) \ni t \mapsto u(t, \omega) \in X$ is continuous.
\end{trivlist}
\end{definition}
\begin{definition}[Equivalence of local processes]\label{def-local processes equivalent}
Two local processes $u:[0, \tau) \times \Omega \rightarrow X$, and $\rv:[0, \sigma) \times \Omega \rightarrow X$, are called equivalent (denoted by $(u, \tau) \sim(\rv, \sigma)$) if and only if
\begin{trivlist}
\item[(i)] $\tau=\sigma$, $\mathbb{P}$-a.s. ;
\item[(ii)] for all $t>0$,
\begin{align}
u(\cdot, \omega)=\rv(\cdot, \omega) \text { on }[0, t], \quad \text { for a.e. } \omega \in \Omega_t(\tau) \cap \Omega_t(\sigma).
\end{align}
\end{trivlist}
Note that if two local admissible processes $u:[0, \tau) \times \Omega \rightarrow X$ and $\rv:[0, \sigma) \times \Omega \rightarrow$ $X$, satisfy all $t>0$ , $u(t)\vert_{\Omega_t(\tau)}=\rv(t)\vert_{\Omega_t(\sigma)} ,\mathbb{P}$-a.s., then they are equivalent.
\end{definition}

\begin{definition}[Accessible stopping time]\label{def-stopping time accessible}
A stopping time $\tau$ is called accessible if and only if there exists an increasing sequence $(\tau_m)_{m \in \mathbb{N}}$ of stopping times such that, on the set $\{\tau>0\}$,
\begin{trivlist}
\item[(i)] $\tau_m<\tau$ ;
\item[(ii)] $lim _{m \rightarrow \infty} \tau_m=\tau$, $\mathbb{P}$-a.s..
\end{trivlist}
Such a sequence $\left(\tau_m\right)_{m \in \mathbb{N}}$ is called an approximating sequence for $\tau$.
\end{definition}

\begin{definition}[Local mild solution]\label{def-local solution}
Assume that   $u_0$ is an $\rV$-valued $\mathscr{F}_0$-measurable random variable  such that
\begin{equation}\label{eqn-u_0 2 integrable}
 \mathbb{E}\vert u_0\vert_\rV^2<\infty
\end{equation}
A local mild solution to problem (\ref{eqn-NS03}) is a pair $(u, \tau)$ such that
\begin{trivlist}
\item[(i)]  $\tau$ is an accessible stopping time;
\item[(ii)]$u:[0, \tau) \times \Omega \mapsto \rV$ is an admissible process;
\item[(iii)] There exists an approximating sequence $\left(\tau_m\right)_{m \in \mathbb{N}}$ which approximates the finite stopping times $\tau$,
such that for all $m \in \mathbb{N}$ and $t \geq 0$, we have
\begin{align}\label{eq-local mild solution-1}
\mathbb{E}\left(\sup _{s \in\left[0, t \wedge \tau_m\right]} \vert u(s) \vert_\rV^2+\int_0^{t \wedge \tau_m}
\vert u(s) \vert_{D(\rA)}^2\,ds\right)<\infty,
\end{align}
and
\begin{equation}\label{eq-local mild solution}
\begin{aligned}
& u\left(t \wedge \tau_m\right)=  S\left(t \wedge \tau_m\right) u_0-\int_0^{t \wedge \tau_m} S\left(t \wedge \tau_m-r\right) B(u(r)) \,d r \\
&+ \int_0^{t\wedge \tau_m} \mathbf{1}_{\left[0, \tau_m\right)}(r) S(t\wedge \tau_m-r) g\left(u\left(r \wedge \tau_m\right)\right)\, d W(r),  \quad
\mathbb{P} \text {-a.s. },
\end{aligned}
\end{equation}
where $S(t)=e^{-t\rA}, t \geq 0$ is the analytic semigroup generated by operator $-\rA$.
\end{trivlist}

A local solution $(u, \tau)$ to problem (\ref{eqn-NS03}) is unique if and only if  for all other local solution $(\rv, \sigma)$ to (\ref{eqn-NS03}), the restricted processes $u\vert_{[0, \tau \wedge \sigma) \times \Omega}$ and $\rv\vert_{[0, \tau \wedge \sigma) \times \Omega}$ are equivalent.
\end{definition}

According to the next result, the "mild" formulation \eqref{eq-local mild solution} is equivalent to the strong formulation \eqref{eq-local strong solution}.

\begin{prop}\label{prop-equivalence} Assume that   $u_0$ is $\rV$-valued $\mathscr{F}_0$-measurable random variable  satisfying condition   \eqref{eqn-u_0 2 integrable}.
Assume that a pair  $(u, \tau)$ satisfies conditions (i), (ii) and the first part of condition (iii) of
    Definition \ref{def-local solution}. Then property  \eqref{eq-local mild solution} and the following one
\begin{equation}\label{eq-local strong solution}
\begin{aligned}
 u\left(t \wedge \tau_m\right)&=  u_0- \int_0^{t \wedge \tau_m} \rA u(r) \, dr -
\int_0^{t \wedge \tau_m} B(u(r)) \,d r \\
&+ \int_0^{t\wedge \tau_m}  g(u(r))\, d W(r),  \quad \mbox{ in } \rH, \;\;
\mathbb{P} \text {-a.s. }
\end{aligned}
\end{equation}
are  equivalent.
\end{prop}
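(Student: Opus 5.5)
The plan is to fix $m$, write $\sigma:=\tau_m$, and reduce both directions to the classical equivalence between weak and mild solutions of linear equations with given integrands (Da Prato–Zabczyk). The idea is to freeze the nonlinear terms as given processes. Set
\[
F(r):=\mathbf{1}_{[0,\sigma)}(r)B(u(r)),\qquad G(r):=\mathbf{1}_{[0,\sigma)}(r)g(u(r\wedge\sigma)).
\]
By \autoref{lem-inequality} we have $|B(u)|_\rH^2\le C|u|_{D(\rA)}|u|_\rV^3$. Combined with \eqref{eq-local mild solution-1}, this gives $\int_0^t|F(r)|_\rH^2\,dr\le C\sup_{s\le t\wedge\sigma}|u(s)|_\rV^3\int_0^{t\wedge\sigma}|u(s)|_{D(\rA)}\,ds<\infty$ a.s. Also $\mathbf{1}_{[0,\sigma)}\rA u\in L^2(0,t;\rH)$ a.s. By continuity of $u$ in $\rV$ on $[0,\sigma]$ and \autoref{ass-g-loc Lip} ($g$ is Lipschitz on bounded sets of $\rV$, hence bounded there), $G$ is a progressively measurable $\rV$-valued process with $\int_0^t|G|_\rV^2\,dr<\infty$ a.s. Localizing further by the stopping times $\inf\{s:\sup_{r\le s}|u(r\wedge\sigma)|_\rV\ge n\}$ if needed, all stochastic integrals are well defined, including in the square-integrable setting.

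Next I would settle how the random time enters the semigroup in \eqref{eq-local mild solution}. I would define the convolutions
\[
Y(s):=\int_0^sS(s-r)F(r)\,dr,\qquad Z(s):=\int_0^sS(s-r)G(r)\,dW(r).
\]
For $Z$ I would use the factorization method to show it has a continuous version in $\rH$; in fact it is continuous in $\rV$ since $G\in L^2$ in $\rV$ and $S$ is analytic. Then \eqref{eq-local mild solution} is read as the identity $u(s\wedge\sigma)=S(s\wedge\sigma)u_0-Y(s\wedge\sigma)+Z(s\wedge\sigma)$, evaluated at $s=t$. I expect this step, namely making sense of the stopped stochastic convolution and checking that the identity at deterministic times transfers to the random time $t\wedge\sigma$, to be the main technical obstacle. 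It is handled by continuity of both sides in $s$ and the fact that $F,G$ vanish after $\sigma$.

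\emph{Strong $\Rightarrow$ mild.} Assume \eqref{eq-local strong solution}. Fix $t$ and $h\in D(\rA)$, and apply the Itô product formula to $s\mapsto\langle u(s\wedge\sigma),S(t-s)h\rangle_\rH$ on $[0,t]$. The map $s\mapsto S(t-s)h$ is deterministic and $C^1$ in $\rH$ with derivative $\rA S(t-s)h$. Since $\rA$ is self-adjoint, the drift contribution $-\langle\rA u,S(t-s)h\rangle$ cancels against $\langle u,\rA S(t-s)h\rangle$ on $[0,\sigma)$. There is no Itô correction, because one factor has finite variation. What remains is
\[
\langle u(t\wedge\sigma),h\rangle=\langle S(t)u_0,h\rangle-\int_0^t\langle S(t-r)F(r),h\rangle\,dr+\int_0^t\langle S(t-r)G(r),h\rangle\,dW(r),
\]
up to the contribution on $[\sigma,t]$, which is handled by the semigroup property $S(t-r)=S(t-\sigma)S(\sigma-r)$. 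Density of $D(\rA)$ in $\rH$ and continuity in $t$ then give \eqref{eq-local mild solution}.

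\emph{Mild $\Rightarrow$ strong.} Assume \eqref{eq-local mild solution}. I would apply $\rA$ to the mild representation and integrate over $[0,t]$. For the deterministic part, $\int_0^t\rA S(s)u_0\,ds=u_0-S(t)u_0$. For $Y$, use deterministic Fubini: $\int_0^t\rA Y(s)\,ds=\int_0^t(\mathbf{1}-S(t-r))F(r)\,dr$. For $Z$, use the stochastic Fubini theorem, justified because $G\in L^2(0,t;\rV)$ a.s. and $|\rA S(s-r)G(r)|_\rH\le C(s-r)^{-1/2}|G(r)|_\rV$ is square integrable: $\int_0^t\rA Z(s)\,ds=\int_0^t(\mathbf{1}-S(t-r))G(r)\,dW(r)$. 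Adding these up and using the mild identity again yields $u(t\wedge\sigma)=u_0-\int_0^{t\wedge\sigma}\rA u-\int_0^{t\wedge\sigma}B(u)+\int_0^{t\wedge\sigma}g(u)\,dW$ in $\rH$, which is \eqref{eq-local strong solution}.
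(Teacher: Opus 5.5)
Your argument follows the paper's proof in both directions. For strong $\Rightarrow$ mild you use the Itô product rule against $S(t-s)h$, with self-adjointness cancelling the $\rA$-terms. For mild $\Rightarrow$ strong you integrate the mild formula against $\rA$ and use $\rA\int_0^tS(s)x\,ds=x-S(t)x$ together with deterministic and stochastic Fubini. Working at a deterministic $t$ and only afterwards evaluating continuous processes at $t\wedge\sigma$ is more careful than the paper. The paper's test function $S(t\wedge\tau_m-s)\xi$ (and its ``fix $t\le\tau_m$'') makes the integrands anticipating. On the piece over $[\sigma,t]$: on $\{\sigma<t\}$ your identity actually reads $\langle S(t-\sigma)u(\sigma),h\rangle_\rH=\dots$. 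Rather than pulling the random operator $S(t-\sigma)$ out of the stochastic convolution and invoking injectivity, it is simpler to use the identity only on $\{t\le\sigma\}$ for rational $t$ and pass to all $t\le\sigma$ by continuity.

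One justification is wrong, however. In mild $\Rightarrow$ strong you justify stochastic Fubini by $|\rA S(s-r)G(r)|_\rH\le C(s-r)^{-1/2}|G(r)|_\rV$ and call this square integrable. It is not: $\int_0^s(s-r)^{-1}|G(r)|_\rV^2\,dr$ diverges logarithmically unless $|G|_\rV$ vanishes near $r=s$. Putting $\rA$ inside $Y$ is likewise not covered by Fubini as it stands, since $F$ is only $\rH$-valued and $\rA S(s-r)F(r)$ need not even be Bochner integrable over the triangle.

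The repair is the paper's device: do not apply $\rA$ to the convolutions, but pair them with $\rA h$, $h\in D(\rA)$. Then $|\langle S(s-r)G(r),\rA h\rangle_\rH|\le|G(r)|_\rH|\rA h|_\rH$, and similarly for $F$, so both Fubini theorems are immediate. You then move $\rA$ back onto $u$ using $u\in L^2(0,t\wedge\sigma;D(\rA))$ and self-adjointness, and conclude by density.

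If you prefer to keep $\rA$ on $Z$, use instead the sharp spectral bound $\int_r^t|\rA S(s-r)x|_\rH^2\,ds\le\frac12|\rA^{1/2}x|_\rH^2$. It gives $\int_0^t\big(\mathbb{E}\int_0^s|\rA S(s-r)G(r)|_\rH^2\,dr\big)^{1/2}ds\le\sqrt{t/2}\,\big(\mathbb{E}\int_0^t|G(r)|_\rV^2\,dr\big)^{1/2}$, which is exactly the hypothesis of the stochastic Fubini theorem after your localization.
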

\begin{proof}
Let us choose and fix $m\in\mathbb{N}$.
By part (iii) of \autoref{def-local solution},
\[
\mathbb{E}\Big(\sup_{s\in[0,t\wedge\tau_m]}|u(s)|_\rV^2
+\int_0^{t\wedge\tau_m}|u(s)|_{D(\rA)}^2\,ds\Big)<\infty,\quad \forall\,t\geq 0,
\]
which guarantees that all Bochner and stochastic integrals below are well-defined and that the (stochastic) Fubini theorems we invoke apply.

Firstly, we aim to show that \eqref{eq-local strong solution} implies \eqref{eq-local mild solution}.
Assume that $u$ satisfies \eqref{eq-local strong solution}.
Let us choose and fix $\xi\in D(\rA )$ and $t>0$. Set
\[
f(s) := S(t\wedge\tau_m-s)\xi,\;\;s\in[0,t\wedge\tau_m].
\]
Therefore, $f\in C^1([0,t\wedge\tau_m];D(\rA))$. By the product rule,
\[
d\langle f(s),u(s)\rangle
= \langle f^\prime(s),u(s)\rangle\,ds + \langle f(s),du(s)\rangle.
\]
Thus, in view of \eqref{eq-local strong solution}, for every $s\in[0,t\wedge\tau_m]$
\begin{equation}
\begin{aligned}
d\langle f(s),u(s)\rangle&=\langle f^\prime(s),u(s)\rangle\,ds-\langle f(s),\rA u(s)\rangle\, ds\\
&-\langle f(s),B( u(s))\rangle\, ds+\langle f(s),g( u(s))\, dW(s)\rangle, \;\;
\mathbb{P} \text {-a.s. }.
\end{aligned}
\end{equation}
Integrating from $0$ to $t\wedge\tau_m$, it implies that
\begin{align*}
\langle &f(t\wedge\tau_m),u(t\wedge\tau_m)\rangle - \langle f(0),u_0\rangle
= \int_0^{t\wedge\tau_m} \langle f^\prime(s),u(s)\rangle \,ds
   - \int_0^{t\wedge\tau_m} \langle f(s),\rA u(s)\rangle \,ds \\
& -\int_0^{t\wedge\tau_m} \langle f(s),B(u(s))\rangle \,ds
   + \int_0^{t\wedge\tau_m} \langle f(s),g(u(s))\,dW(s)\rangle, \;\;
\mathbb{P} \text {-a.s. } .
\end{align*}
Note that $f(t\wedge\tau_m)=\xi$, $f(0)=S(t\wedge\tau_m)\xi$ and $f^\prime(s)=\rA S(t\wedge\tau_m-s)\xi$. Hence,
\begin{align*}
&\langle u(t\wedge\tau_m),\xi\rangle - \langle S(t\wedge\tau_m)\xi,u_0\rangle\\
&= \int_0^{t\wedge\tau_m} \langle \rA S(t\wedge\tau_m-s)\xi,u(s)\rangle ds - \int_0^{t\wedge\tau_m} \langle S(t\wedge\tau_m-s)\xi,\rA u(s)\rangle \,ds\\
&- \int_0^{t\wedge\tau_m} \langle S(t\wedge\tau_m-s)\xi,B(u(s))\rangle \,ds\\
&+ \int_0^{t\wedge\tau_m} \langle \mathbf{1}_{\left[0, \tau_m\right)}(s)S(t\wedge\tau_m-s)\xi,g(u(s))\,dW(s)\rangle, \;\;
\mathbb{P} \text {-a.s. } .
\end{align*}
Recall that $\rA$ is self-adjoint and the semigroup $S(t)=e^{-t\rA}$ generated by $-\rA$ is also self-adjoint. Therefore
\begin{equation}\label{eq-3.17}
\begin{aligned}
\langle u(t\wedge\tau_m),\xi\rangle
&= \langle S(t\wedge\tau_m)u_0,\xi\rangle -  \langle \int_0^{t\wedge\tau_m}S(t\wedge\tau_m-s)B(u(s)) \,ds,\xi\rangle
\\
&
+  \langle \int_0^{t\wedge\tau_m}\mathbf{1}_{\left[0, \tau_m\right)}(s)S(t\wedge\tau_m-s)g(u(s))\,dW(s),\xi\rangle , \;\;
\mathbb{P} \text {-a.s. }.
\end{aligned}
\end{equation}
Since the above identity holds for all $\xi\in D(\rA)$, and $D(\rA)$ is dense in $\rH$, we infer that \eqref{eq-3.17} holds for every $\xi\in \rH$, which implies \eqref{eq-local mild solution} holds.

In the following, we aim to show that \eqref{eq-local mild solution} implies \eqref{eq-local strong solution}. Suppose $u$ is a mild solution satisfying \eqref{eq-local mild solution}.

Let us choose and fix $\xi\in D(\rA)$ and $t\leq \tau_m$. At first, we aim to compute $\int_0^t \langle u(s),\rA \xi\rangle \,ds$.
In view of \eqref{eq-local mild solution}, taking the inner product with $\rA\xi$ gives
\begin{equation}\label{eq-pairing-1}
\begin{aligned}
\langle u(t),\rA\xi\rangle
&= \langle S(t)u_0,\rA\xi\rangle
  - \int_0^t \langle S(t-s)B(u(s)),\rA\xi\rangle\, ds\\
  &+ \int_0^t \langle S(t-s)g(u(s))\,dW(s),\rA\xi\rangle , \;\;
\mathbb{P} \text {-a.s. }.
\end{aligned}
\end{equation}

Recall \cite[Theorem 2.4(b)]{Pazy_1983}, for every $x\in\rH$
\begin{align}\label{eq-pazy thm2.4}
\rA \left(\int_0^t S(s)x\,ds \right)=x-S(t)x.
\end{align}
By the self-adjointness of $\rA$ and equality \eqref{eq-pazy thm2.4} we infer that
\begin{equation}\label{eq-semigroup1}
\begin{aligned}
\int_0^t \langle S(s)u_0,\rA \xi\rangle \,ds
&= \langle \int_0^t S(s)u_0\,ds,\rA \xi\rangle = \langle \rA \int_0^t S(s)u_0\,ds,\xi\rangle \\
 &= \langle u_0 - S(t)u_0,\xi\rangle.
\end{aligned}
\end{equation}
Next, by the deterministic Fubini theorem, the self-adjointness of $\rA$ and equality \eqref{eq-pazy thm2.4}, we infer that
\begin{align}\label{eq-semigroup2}
&\int_0^t\int_0^s \langle S(s-r)B(u(r)),\rA\xi\rangle\, dr\,ds=\int_0^t \Big\langle \int_0^s S(s-r)B(u(r))\,dr, \rA \xi\Big\rangle \,ds\\
&=\int_0^t \Big\langle \int_r^t S(s-r)B(u(r))\,ds,  \rA\xi\Big\rangle \,dr=\int_0^t \Big\langle \rA\int_0^{t-r} S(s)B(u(r))\,ds,  \xi\Big\rangle \,dr\nonumber\\
&= \int_0^t\Big\langle  B(u(r)) -  S(t-r)B(u(r)), \xi\Big\rangle\,dr.
\nonumber
\end{align}
For the stochastic term, we use the stochastic Fubini theorem (\cite[Theorem 4.33]{DaPrato+Zabczyk_2014}) together with the self-adjointness of $\rA$ and equality \eqref{eq-pazy thm2.4}
\begin{align}\label{eq-semigroup3}
&\int_0^t\int_0^s \Big\langle  S(s-r)g(u(r))\,dW(r), \rA \xi\Big\rangle \,ds=\int_0^t \Big\langle \int_0^s S(s-r)g(u(r))\,dW(r), \rA \xi\Big\rangle \,ds\nonumber\\
&= \int_0^t\int_r^t \Big\langle  S(s-r)g(u(r)),  \rA\xi\Big\rangle \,ds\,dW(r)= \int_0^t \Big\langle \rA \int_0^{t-r}S(s)g(u(r))\,ds,  \xi\Big\rangle \,dW(r)\nonumber\\
&=\int_0^t\Big\langle  g(u(r))-S(t-r)g(u(r)), \xi\Big\rangle\,dW(r).
\end{align}

Combining \eqref{eq-pairing-1} and \eqref{eq-semigroup1}--\eqref{eq-semigroup3}, and recalling that $\rA$ is self-adjoint, we deduce
\begin{equation}\label{eq-Astarpair}
\begin{aligned}
&\int_0^t \langle u(s),\rA \xi\rangle \,ds\\&=\int_0^t \langle S(s)u_0,\rA \xi\rangle \,ds-\int_0^t\int_0^s \langle S(s-r)B(u(r)),\rA \xi\rangle \,dr\,ds\\
&+\int_0^t\int_0^s \langle S(s-r)g(u(r))\,dW(r),\rA \xi\rangle \,ds\\
&= \langle u_0 - S(t)u_0,\xi\rangle
+ \Big\langle \int_0^t S(t-r)B(u(r))\,dr,\xi\Big\rangle - \Big\langle\int_0^t B(u(r))\,dr,\xi\Big\rangle\\
  &+ \int_0^t\Big\langle  g(u(r))\,dW(r), \xi\Big\rangle-\int_0^t\Big\langle S(t-r)g(u(r))\,dW(r), \xi\Big\rangle,\;\;\mathbb{P} \text {-a.s. },
\end{aligned}
\end{equation}
which implies
\begin{equation}\label{eq-Astarpair-1}
\begin{aligned}
&\langle S(t)u_0,\xi\rangle
  - \int_0^t \langle S(t-s)B(u(s)),\xi\rangle ds
  + \int_0^t \langle S(t-s)g(u(s))\,dW(s),\xi\rangle\\
&=-\int_0^t \langle u(s),\rA \xi\rangle \,ds+\langle u_0,\xi\rangle
  - \int_0^t \langle B(u(s)),\xi\rangle \,ds\\
&+ \int_0^t \langle g(u(s))\,dW(s),\xi\rangle,\;\;\mathbb{P} \text {-a.s. }.
\end{aligned}
\end{equation}
Therefore, in view of \eqref{eq-local mild solution} and \eqref{eq-Astarpair-1} we infer that
\begin{align}
&\langle u(t),\xi\rangle\\
&= \langle S(t)u_0,\xi\rangle
  - \int_0^t \langle S(t-s)B(u(s)),\xi\rangle ds
  + \int_0^t \langle S(t-s)g(u(s))\,dW(s),\xi\rangle\nonumber\\
&= \langle u_0,\xi\rangle
  - \int_0^t \langle u(s),\rA \xi\rangle ds
  - \int_0^t \langle B(u(s)),\xi\rangle ds
  + \int_0^t \langle g(u(s))\,dW(s),\xi\rangle,\;\;\mathbb{P} \text {-a.s. }.\nonumber
\end{align}
Since this holds for all $\xi\in D(\rA)$ and $t\leq\tau_m$, and $D(\rA)$ is dense in $\rH$, it follows that for $t\leq\tau_m$,
\[
u(t) = u_0 - \int_0^t \rA u(s)\,ds - \int_0^t B(u(s))\,ds + \int_0^t g(u(s))\,dW(s),\;\;\mathbb{P} \text {-a.s. }
\mbox{ in } \rH.
\]
Replacing $t$ by $t\wedge\tau_m$, we infer that \eqref{eq-local strong solution} holds.
The proof of \autoref{prop-equivalence} is complete.
\end{proof}

The above result is important whenever we apply It\^o's formula.

\begin{definition}[Maximal and global solution]\label{def-local solution maximal}
 A local solution $(u, \tau)$ is called maximal local solution if and only if for all other local solution $(\rv, \sigma)$, the following two conditions are satisfied $\mathbb{P}$-a.s.:
\begin{trivlist}
\item[(i)] $\sigma \leq \tau$;
\item[(ii)]$u\vert_{[0, \sigma)}=\rv$.
\end{trivlist}

If $(u, \tau)$ is a maximal local solution to  (\ref{eqn-NS03}), the stopping time $\tau$ is called its lifetime. The existence of a maximal local solution requires the uniqueness of local solutions.

Finally, a local solution $(u, \tau)$ is called a global solution if and only if  $\tau=\infty$.
 \end{definition}

\subsection{Main Theorem}
We are now ready to state the main result, which will be proved in \autoref{subsec-Poincar\'e domain-global} and \autoref{subsec-Full space-global}, respectively.
\begin{theorem}\label{thm-main}
Assume that \autoref{ass-prob} and \autoref{ass-g-loc Lip} are satisfied, and one of the following two assumptions is satisfied.
\begin{trivlist}
\item[(i)] Assume that $\dom$ is a bounded domain or an unbounded domain satisfying the Poincar\'e condition and function $\phi$ satisfies \autoref{assumption-phi}.
\item[(ii)] Assume that $\dom=\mathbb{R}^3$ and function
$\phi$ satisfies \autoref{assumption-phi} and also \autoref{assumption-phi-1}.
\end{trivlist}
Let $u_0$ be an $\mathscr{F}_0$-measurable $\rV$-valued random variable with $\mathbb{E}\vert u_0\vert_\rV^2 < \infty$.
Then there exists a unique global solution to the main problem \eqref{eqn-NS03}.
\end{theorem}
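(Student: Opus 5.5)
The proof has two stages: first a unique local maximal mild solution, then a Lyapunov argument showing its lifetime is infinite.

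\textbf{Stage 1: local existence and uniqueness.} We work with the truncated problem. Fix a smooth cutoff $\theta_R:[0,\infty)\to[0,1]$ equal to $1$ on $[0,R]$ and to $0$ on $[2R,\infty)$. Replace $B(u)$ by $\theta_R(|u|_{X_t})B(u)$ and $g(u)$ by $\theta_R(|u|_{X_t})g(u)$, and consider the map
\[
\Phi(u)(t)=S(t)u_0-\int_0^t S(t-r)\theta_R B(u(r))\,dr+\int_0^t S(t-r)\theta_R g(u(r))\,dW(r)
\]
on $\mathbb{M}^2(X_T)$. We will use maximal regularity of the Stokes semigroup in the following forms:
\[
|S(\cdot)u_0|_{X_T}\le C|u_0|_\rV,\qquad
\Big|\int_0^\cdot S(\cdot-r)f\,dr\Big|_{X_T}\le C|f|_{L^2(0,T;\rH)},
\]
\[
\mathbb{E}\Big|\int_0^\cdot S(\cdot-r)h\,dW\Big|_{X_T}^2\le C\,\mathbb{E}\int_0^T|h|_\rV^2\,dr .
\]
Combining these with \autoref{lem-inequality} and Hölder in time gives
\[
\Big(\int_0^T|B(u)-B(\rv)|_\rH^2\Big)^{1/2}\le CRT^{1/4}|u-\rv|_{X_T}.
\]
For the noise term, \autoref{ass-g-loc Lip} gives $\int_0^T|g(u)-g(\rv)|_\rV^2\le C_6(2R)^2T|u-\rv|_{X_T}^2$. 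With the cutoff handled by the standard argument, $\Phi$ is a contraction for $T$ small, depending only on $R$. Iterating on successive intervals gives a global solution $u_R$ of the truncated equation. Set $\tau_R=\inf\{t:|u_R|_{X_t}\ge R\}$; then $u_R$ solves \eqref{eq-local mild solution} up to $\tau_R$. Uniqueness of truncated solutions makes the family consistent, so $\tau=\lim_R\tau_R$ together with the glued process gives the local maximal solution. Moreover, on $\{\tau<\infty\}$ we have $\limsup_{t\uparrow\tau}|u|_{X_t}=\infty$. By \autoref{prop-equivalence}, Itô's formula may be applied in the strong form.

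\textbf{Stage 2: no blow-up via a Lyapunov function.} Apply Itô's formula to $F(u)=\log(1+|\rA^{1/2}u|_\rH^2)$, or to $|\rA^{1/2}u|^2$ composed with a concave function. Write $y=|\rA^{1/2}u|_\rH^2$. Then
\[
dy=\big(-2|\rA u|^2-2\langle B(u),\rA u\rangle+\phi(u)^2y\big)dt+2\phi(u)y\,dW .
\]
For $\log(1+y)$ the Itô correction is $-\tfrac{2\phi^2y^2}{(1+y)^2}$, so the drift is
\[
\frac{-2|\rA u|^2+2|\langle B(u),\rA u\rangle|}{1+y}+\frac{\phi^2y}{1+y}-\frac{2\phi^2y^2}{(1+y)^2}.
\]
By Agmon and Young we have $2|\langle B(u),\rA u\rangle|\le |\rA u|^2+Cy^3$, which bounds the first term by $Cy^3/(1+y)$. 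The $\phi^2$ terms combine to $\phi^2 y(1-y)/(1+y)^2\approx-\phi^2$ for large $y$. Since $\phi^2\ge b^2y^\kappa$ by \eqref{eqn-A1}, the drift is $\le Cy^2-b^2y^{\kappa}/2+C$ for large $y$, which is bounded above when $\kappa>2$ (or $\kappa=2$ with $b$ large). This step is delicate and the exact choice of Lyapunov function may need adjusting. The concern is that the $\phi^2 y$ term appears with a plus sign, and only the logarithm's concavity makes the net sign negative; this is the noise-induced damping, and it is the main obstacle.

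\textbf{Conclusion in each case.} With $\tau_N=\inf\{t:y(t)\ge N\}\wedge\tau$, taking expectations gives $\mathbb{E}\log(1+y(t\wedge\tau_N))\le\mathbb{E}\log(1+|u_0|_\rV^2)+Ct$. Hence $\mathbb{P}(\tau_N\le t)\log(1+N)\le C(t)$, and $\sup_{s<\tau}y(s)<\infty$ a.s. on $\{\tau<t\}$. It remains to upgrade this to finiteness of $|u|_{X_t}$, i.e. of $\int_0^\tau|\rA u|^2$, since Stage 1's blow-up criterion is in $X_t$.

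In case (i) I would first bound $\int|\rA u|^2$ via the energy identity for $y$ with $\int|\langle B,\rA u\rangle|\le\frac12\int|\rA u|^2+C\int y^3$. The remaining martingale $\int\phi y\,dW$ is controlled using \eqref{eqn-A3} on $[0,\tau_N)$ together with BDG, which gives finiteness of $|u|_{X^0_{\tau}}$, equivalent to the $X$ norm by \eqref{eq-X_T^0}.

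In case (ii) Poincaré fails, so the $\rH$-norm must be handled separately. Here $d|u|_\rH^2=(-2|\rA^{1/2}u|^2+\phi^2|u|^2)dt+2\phi|u|^2dW$, and $\phi$ is controlled by $|u|_{X^0}$ through \autoref{assumption-phi-1}. By a Gronwall argument on $\log|u|_\rH^2$ (its Itô correction again cancels the $\phi^2$ term) the $\rH$-norm stays finite, so finiteness of $|u|_{X^0_\tau}$ controls the full $X_\tau$ norm. This contradicts the blow-up criterion, so $\tau=\infty$ a.s.
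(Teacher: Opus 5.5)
Your strategy is the paper's in substance. You build a unique maximal local solution with a blow-up alternative. You then apply It\^o's formula to a concave function of $y=|\rA^{1/2}u|_\rH^2$, so that the It\^o correction turns the $+\phi^2y$ term into damping. When $\dom=\mathbb{R}^3$ you add a separate logarithmic estimate for $|u|_\rH$. Your computation for $\log(1+y)$ is correct: it is the $p\to0$ limit of the paper's $l(\rho)=\rho^p$, and for $\kappa=2$ it gives the limiting threshold $b^2>C_0$.

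The execution differs from the paper in three places:
\begin{itemize}
\item You make the truncated solutions global by iteration, so the blow-up alternative comes for free. The paper proves \autoref{prop-maximal solution implies blowup} separately.
\item You conclude pathwise, by contradiction with blow-up on $\{\tau<\infty\}$; logarithmic tails suffice. The paper proves tail bounds of order $R^{-p}$, uniform in $m$, and shows $\mathbb{P}(\bar\tau_m\le T)\to0$.
\item You discard the dissipative term $-|\rA u|_\rH^2/(1+y)$ and recover $\int_0^\tau|\rA u|_\rH^2$ from the energy identity for $y$. The paper keeps the analogous term $c(1+\rho^2)^{(p-2)/2}|\rA u|_\rH^2$ and bounds $\int|\rA u|_\rH^2$ by Markov's inequality, so it never has to control $\phi$ along the path in that step.
\end{itemize}

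That last choice is where your argument, as written, is circular. On $[0,\tau_N)$ you only know $y\le N$. But \eqref{eqn-A3} needs a bound on the whole $|u|_{X_T}$, including $\int|u|_{D(\rA)}^2$, which is exactly what you are trying to prove. In case (ii) the same objection applies to \autoref{assumption-phi-1}, because $|u|_{X^0_T}$ contains $\int|\rA u|_\rH^2$.

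What you need is a bound on $\phi$ in terms of $|\rA^{1/2}u|_\rH$ alone.
\begin{itemize}
\item In case (i), \autoref{lem-g-boundedness on balls} gives $|\phi(w)|\le C_6(r)$ for $0<|w|_\rV\le r$; combine this with the Poincar\'e inequality.
\item In case (ii), apply \autoref{assumption-phi-1} to the path $t\mapsto S(t)w$ on $[0,T]$ and let $T\downarrow0$. Since $\int_0^T|\rA S(t)w|_\rH^2\,dt\to0$ and $\Gamma$ is continuous, this gives $|\phi(w)|^2\le\Gamma(|\rA^{1/2}w|_\rH^2)$.
\end{itemize}
With this bound, the following all go through on $\{\sup_{s<\tau}y(s)\le N\}$: your energy-identity argument, the convergence of $\int\phi y\,dW$ and of $\int\phi\,dW$, and the exponential bound for $|u|_\rH^2$. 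The same bound is also what makes your drift bounded for $y\le1$, where $\phi^2y(1-y)/(1+y)^2$ is positive; the paper uses this fact for $\rho\le2R_0$. Alternatively, keep the dissipative term, as the paper does.

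Two localization points also need fixing:
\begin{itemize}
\item You must also localize with $\bar\tau_m$, because $u$ lives only on $[0,\tau)$ and the martingale property needs $\phi$ bounded.
\item The event you actually control is $\{\sup_{s<t\wedge\tau}y(s)\ge N\}$, not $\{\tau_N\le t\}$. The latter contains $\{\tau\le t,\ \sup_{s<\tau}y(s)<N\}$, on which $\tau_N=\tau$, so the bound $\mathbb{P}(\tau_N\le t)\log(1+N)\le C(t)$ is false as stated.
\end{itemize}
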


\section{Approximated problem}\label{sec-A priori estimates}
To prepare for the proof of local existence of solutions to \eqref{eqn-NS03}, we first introduce an approximated problem.

From this section to \autoref{subsec-local maximal}, we do not use the special form \eqref{eqn-g} of function $g$ and we use Assumption \ref{ass-g-loc Lip}.

Assumption \ref{ass-g-loc Lip}  implies the following boundedness on balls condition.
\begin{lemma}\label{lem-g-boundedness on balls}
For every $R>0$, there exists a constant $C_6:=C_6(R)>0$ such that
for all $u \in V$, if $ \vert u\vert_{\rV} \leq R$  then
\begin{align}\label{eqn-g bounded-100}
 \vert g(u) \vert_\rV  \leq C_6\vert u\vert_\rV.
\end{align}
\end{lemma}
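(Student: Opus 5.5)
The natural route is to apply Assumption \ref{ass-g-loc Lip} with $\rv=0$. The key observation is that $g(0)=\phi(0)\cdot 0=0$ by the special form \eqref{eqn-g}. Since Assumption \ref{ass-g-loc Lip} is stated for all pairs in the ball of radius $R$, and $|0|_\rV=0\le R$, we may take $\rv=0$ in \eqref{eqn-g Lipschitz-100}. This gives
\begin{align*}
|g(u)|_\rV=|g(u)-g(0)|_\rV\le C_6(R)\,|u-0|_\rV=C_6(R)\,|u|_\rV,\qquad |u|_\rV\le R,
\end{align*}
which is exactly \eqref{eqn-g bounded-100}, with the same constant $C_6(R)$.

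The only delicate point is the claim $g(0)=0$. The section explicitly says it does not use the special form \eqref{eqn-g}. If one wants a statement for a general $g$ satisfying only Assumption \ref{ass-g-loc Lip}, then $g(0)$ need not vanish. In that case I would instead write
\begin{align*}
|g(u)|_\rV\le |g(0)|_\rV+C_6(R)\,|u|_\rV ,
\end{align*}
which is a bound of affine type, $C(R)(1+|u|_\rV)$, rather than the linear bound \eqref{eqn-g bounded-100}.

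For the lemma as stated I would therefore rely on $g(u)=\phi(u)u$, which is well defined at $u=0$ because $\phi:\rV\to\mathbb{R}$ is finite there. I would add a one-line remark that $g(0)=0$ is the only structural input used. With that settled, the proof is a single application of the Lipschitz estimate and involves no real obstacle.
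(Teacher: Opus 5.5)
Your proof is correct and is essentially the paper's own argument: the paper also observes that $g(0)=0$ and applies the local Lipschitz assumption with $\rv=0$, with only an extra triangle-inequality step. Your remark is also accurate: $g(0)=0$ comes from the special form $g(u)=\phi(u)u$, even though the section claims not to use that form, and it is exactly the one structural input the paper's proof relies on without flagging it.
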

\begin{proof}[Proof of \autoref{lem-g-boundedness on balls}]
Note that $g(0)=0$. By Assumption \ref{ass-g-loc Lip}, we infer that
\begin{align*} \vert g(u) \vert_\rV  &=
 \vert g(0)+ g(u) -g(0)\vert_\rV
 \leq
\vert g(0)\vert_\rV  +  \vert g(u) -g(0)\vert_\rV
 \\ &=  \vert g(u) -g(0)\vert_\rV \leq C_6\vert u-0\vert_\rV= C_6\vert u\vert_\rV.
\end{align*}
The proof is complete.
\end{proof}

As a first step, we define a smooth truncation function that will be used later.

\begin{definition}\label{def-theta}
Let $\theta: \mathbb{R}_{+} \rightarrow[0,1]$ be a non-increasing smooth function with compact support such that
\begin{align}
&\inf _{x \in \mathbb{R}_{+}} \theta^{\prime}(x) \geq-2,\label{eqn-theta prime}\\
&\theta(x)=
\begin{cases}
1,\;\;x \in[0,1]\\
0,\;\;x \in[2, \infty).
\end{cases} \label{eq-thata}
\end{align}

For $n \geq 1$ set
\[
\theta_n(\cdot):=\theta(\frac{\cdot}{n}),
\]
\end{definition}
We have the following easy lemma about $\theta$ as a consequence of previous description, see e.g.
\cite[Lemma 4.3]{Brz+Millet_2014}.

\begin{lemma}\label{lemma-theta}
If $h: \mathbb{R}_{+} \rightarrow \mathbb{R}_{+}$is a non-decreasing function, then for every $x, y \in \mathbb{R}$,
\begin{align}
\theta_n(x) h(x) \leq h(2 n), \quad\vert\theta_n(x)-\theta_n(y)\vert \leq \frac{2}{n}\vert x-y\vert.
\end{align}
\end{lemma}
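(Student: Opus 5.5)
The statement is elementary, so I will argue directly from the properties of $\theta$ in Definition~\ref{def-theta}, treating the two inequalities separately. Throughout I read $\theta_n$ as defined on $\mathbb{R}_+$. For $x,y\in\mathbb{R}$ one can either restrict to $x,y\ge 0$, which is the only case used later since the arguments will be norms, or extend $\theta$ evenly. Neither choice affects the argument.

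\textbf{First inequality.} I split into two cases according to the support of $\theta_n$. If $x\ge 2n$, then $x/n\ge 2$, so $\theta_n(x)=0$ by \eqref{eq-thata}. Hence $\theta_n(x)h(x)=0\le h(2n)$, using that $h$ takes values in $\mathbb{R}_+$. If $0\le x<2n$, then $0\le\theta_n(x)\le 1$ because $\theta$ takes values in $[0,1]$. Since $h$ is non-decreasing, $h(x)\le h(2n)$. Therefore $\theta_n(x)h(x)\le h(x)\le h(2n)$.

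\textbf{Second inequality.} I use the mean value theorem. Since $\theta$ is smooth with compact support and non-increasing, its derivative satisfies $\theta'\le 0$. Combined with \eqref{eqn-theta prime}, this gives $|\theta'(s)|\le 2$ for all $s\ge 0$. By the chain rule $\theta_n'(s)=\frac1n\theta'(s/n)$, so $|\theta_n'|\le \frac 2n$ on $\mathbb{R}_+$. For $x,y\ge0$ the mean value theorem then yields $|\theta_n(x)-\theta_n(y)|\le \sup|\theta_n'|\,|x-y|\le\frac 2n|x-y|$.

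The only point needing care is the domain issue for negative arguments mentioned at the start. Note that only the lower bound $\theta'\ge -2$ and the monotonicity of $\theta$ are needed to control $|\theta'|$.
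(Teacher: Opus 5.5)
Your proof is correct: splitting into $x\ge 2n$ and $0\le x<2n$ gives the first bound. For the second, $\theta'\le 0$ together with $\inf\theta'\ge -2$ gives $|\theta_n'|\le 2/n$, and the mean value theorem then yields the Lipschitz bound; your reading of the statement on $\mathbb{R}_+$ is the right one, since $h$ is only defined there. The paper gives no argument of its own and simply cites \cite[Lemma 4.3]{Brz+Millet_2014}, where this same elementary reasoning is intended, so your approach is essentially the standard one.
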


Recall that $\rA$ is the Stokes operator defined in \eqref{operator A}. $S(t)=e^{-t\rA}, t \geq 0$ is the analytic semigroup generated by operator $-\rA$.

We consider the following approximated problem to \eqref{eqn-NS03}:
\begin{equation}\label{NS-approximated}
\begin{aligned}
u^n(t)= & S(t) u_0-\int_0^t S(t-r) \theta_n\left(\vert u^n\vert_{X_r}\right) B\left(u^n(r)\right) d r \\
& +\int_0^t S(t-r)\theta_n\left(\vert u^n\vert_{X_r}\right) g\left(u^n(r)\right) \,d W(r), \quad t \in[0, T]
\end{aligned}
\end{equation}

In the following, we summarize several important properties of the semigroup $S(t)$ for $t \geq 0$ that will be used later, see e,g. \cite{Brz+Hussain_2020}, \cite{Agranovich+Vishik_1964}, and \cite{Pardoux_1979}.

\begin{prp}\label{prp-S}
Assume that $T_0>0$. Then there exists  constants $C_7=C_7(T_0)$, $C_8=C_8(T_0)$ and $C_9=C_9(T_0)$ such that the following holds.
\begin{trivlist}
\item[(i)]  For every $T \in (0,T_0]$ and $f \in L^2(0, T ; \mathrm{H})$, function $u=S\ast f$ defined by
\[
u(t)=\int_0^t S(t-r) f(r) \,d r, \quad t \in[0, T]
\]
belongs to $X_T$ and satisfies
\begin{align}\label{eqn-S*f norm}
\vert u\vert _{X_T}^2 \leq C_7\Vert f\Vert _{L^2(0, T ; \rH)}^2.
\end{align}
Note that the map $S \ast: f \mapsto S \ast f$ from $L^2(0, T ; \rH)$ into $X_T$ is linear and bounded.
\item[(ii)]For every $T \in (0,T_0]$ and every process $\xi \in \mathbb{M}^2(0, T ; \rV)$, the process $u=S \diamond \xi$ defined by
\[
u(t)=\int_0^t S(t-r) \xi(r) \,d W(r), \quad t \in[0, T]
\]
belongs to $\mathbb{M}^2\left(X_T\right)$ and satisfies
\begin{align}\label{eqn-S diamond xi norm}
\vert u\vert_{\mathbb{M}^2\left(X_T\right)}^2 \leq C_8\vert \xi\vert_{\mathbb{M}^2(0, T ; \rV)}^2 .
\end{align}
Note that $S \diamond: \xi \mapsto S \diamond \xi$ is a linear and bounded map from $\mathbb{M}^2(0, T ; \rV)$ into $\mathbb{M}^2\left(X_T\right)$.
\item[(iii)] For every $T \in (0,T_0]$ and every $u_0 \in \rV$, a function $u=S u_0$ defined by
\begin{align}
u(t)=S(t) u_0, \quad t \in[0, T]
\end{align}
belongs to $X_T$ and satisfies
\begin{align}\label{eqn-10.14}
\vert u\vert_{X_T} \leq C_9\vert u_0\vert_\rV.
\end{align}
\end{trivlist}
Moreover, if the domain $\dom$ is a bounded domain or an unbounded domain satisfying the Poincar\'e condition, then the above is true with $T_0=\infty$.
\end{prp}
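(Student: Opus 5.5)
We prove the three parts of \autoref{prp-S} by spectral calculus for the nonnegative self-adjoint operator $\rA$, writing $S(t)=e^{-t\rA}$ and $|x|_\rV^2=|x|_\rH^2+|\rA^{1/2}x|_\rH^2$ for $x\in\rV$. Each $X_T$ norm splits into a part controlled by $\rA$ (the $\rA^{1/2}$ and $\rA$ terms) and a lower-order $\rH$ part. The $\rA$-part will be bounded uniformly in $T$. The $\rH$-part is where the dependence on $T_0$ enters; in Poincar\'e domains it disappears by \eqref{eqn-Poincar\'e} and \eqref{eqn-Poincar\'e-1}.

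\emph{Part (iii).} For $u_0\in\rV$ we have $\sup_t|S(t)u_0|_\rV\le |u_0|_\rV$ by contractivity. The energy identity
\[
\tfrac12\tfrac{d}{dt}|\rA^{1/2}S(t)u_0|_\rH^2=-|\rA S(t)u_0|_\rH^2
\]
gives $\int_0^T|\rA S(t)u_0|_\rH^2\,dt\le \tfrac12|\rA^{1/2}u_0|_\rH^2$. It can also be checked directly, since $\int_0^\infty\lambda^2e^{-2t\lambda}\,dt=\lambda/2$. The remaining term is $\int_0^T|S(t)u_0|_\rH^2\,dt\le T_0|u_0|_\rH^2$. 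Continuity of $t\mapsto S(t)u_0$ in $\rV$ follows from strong continuity of $S$ on $\rV=D(\rA^{1/2})$.

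\emph{Part (i).} We apply the standard maximal $L^2$-regularity for self-adjoint generators: the map $f\mapsto \rA\,S\ast f$ is bounded on $L^2(0,\infty;\rH)$ with norm $\le1$. One proof uses the spectral resolution and Plancherel in time, since the multiplier $\lambda/(i\tau+\lambda)$ has modulus $\le1$. A more elementary route takes $f$ smooth, so that $u$ is a strong solution of $u'+\rA u=f$. Testing with $\rA u$ gives
\[
\tfrac12\tfrac{d}{dt}|\rA^{1/2}u|_\rH^2+|\rA u|_\rH^2=\langle f,\rA u\rangle\le \tfrac12|f|_\rH^2+\tfrac12|\rA u|_\rH^2 .
\]
This yields both $\sup_t|\rA^{1/2}u|^2$ and $\int|\rA u|^2$ bounded by $\|f\|^2_{L^2(0,T;\rH)}$, and the general case follows by density. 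The $\rH$ part is handled by testing with $u$, which gives $\sup|u|_\rH^2\le T_0\|f\|^2$ by Cauchy--Schwarz. Continuity into $\rV$ comes from the Lions--Magenes lemma applied to the pair $(D(\rA),\rH)$, whose interpolation space is $\rV$.

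\emph{Part (ii).} This is the main obstacle, because the $\sup_t$ of a stochastic convolution cannot be handled by a pathwise argument. We will apply Itô's formula to $|\rA^{1/2}u|_\rH^2$, where $u$ is the strong solution of $du=-\rA u\,dt+\xi\,dW$. To make this rigorous, we first take $\xi$ in a dense class, or use Yosida/Galerkin approximations $\rA_k$, and pass to the limit using the linear bound. Itô's formula gives
\[
|\rA^{1/2}u(t)|^2+2\int_0^t|\rA u|^2\,ds=\int_0^t|\rA^{1/2}\xi|^2\,ds+2\int_0^t\langle \rA^{1/2}u,\rA^{1/2}\xi\rangle\,dW .
\]
Taking expectations bounds $\mathbb{E}\int|\rA u|^2$. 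For the supremum we use the Burkholder--Davis--Gundy inequality:
\[
\mathbb{E}\sup_t\Big|\int\langle \rA^{1/2}u,\rA^{1/2}\xi\rangle\,dW\Big|\le C\,\mathbb{E}\Big(\sup|\rA^{1/2}u|^2\int|\rA^{1/2}\xi|^2\Big)^{1/2}\le \tfrac14\mathbb{E}\sup|\rA^{1/2}u|^2+C\,\mathbb{E}\int|\xi|_\rV^2 .
\]
The first term is then absorbed into the left-hand side. The $\rH$ part is done the same way, or via the mild form with $T_0$. Progressive measurability and path continuity in $\rV$ come from the factorization method (Da Prato--Kwapie\'n--Zabczyk) or again from an Itô/Lions--Magenes argument.

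For Poincar\'e domains, all the $\rH$ contributions are dominated by the $\rA^{1/2}$ and $\rA$ terms uniformly in time. This makes every constant independent of $T$, which gives the statement with $T_0=\infty$.
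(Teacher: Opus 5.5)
Your proposal is correct and follows the same classical route the paper relies on. The paper gives no proof of its own: in Remark~\ref{rem-S} it defers to maximal $L^2$-regularity for self-adjoint generators for (i) and (iii), and to Pardoux's It\^o formula in the triple $D(\rA)\subset\rV\subset\rH$ for (ii), which is exactly your It\^o-plus-Burkholder--Davis--Gundy estimate for $|\rA^{1/2}u|_\rH^2$, with the Poincar\'e-domain norm equivalences removing the $T_0$-dependence as you describe.

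One small caution: on unbounded domains such as $\dom=\mathbb{R}^3$ the Stokes operator has no eigenbasis, so in (ii) regularize with Yosida approximations or spectral projections $\mathbf{1}_{[0,k]}(\rA)$ rather than Galerkin. With the spectral projections, $S\diamond \mathbf{1}_{[0,k]}(\rA)\xi$ is Cauchy in $\mathbb{M}^2(X_T)$ by your own linear bound. This gives at once the a priori finiteness needed for the absorption step and path continuity in $\rV$.
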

\begin{remark}\label{rem-S}
Let us point out that the above conditions are classical. Conditions (i) and (iii) are true for a self-adjoint operator $A$ in separable Hilbert space, see the book \cite[Chapter 1]{Vishik+Fursikov_1988}, see also \cite[Theorem 2]{Brz_1991}. Condition (ii) follows from Pardoux \cite{Pardoux_1979} in the Gelfand triple $D(A) \subset D(A^{1/2}) \subset H$.
\end{remark}

Before we discuss the following lemmas and properties, we define the following stopping times which we will use in the rest of \autoref{sec-A priori estimates}.

For every $T>0$, let us define the following functions, which incorrectly can be called "stopping times":
\begin{equation}\label{eq-tau_u}
\begin{aligned}
\tau_{2n}: X_T \ni u\mapsto \tau_{2n}(u) &:=\inf \{t\in[0,T]: \vert u\vert_{X_t}\geq 2n \}\wedge T.
\end{aligned}
\end{equation}
Since, for every $u \in X_T$ the map $[0, T] \ni t \mapsto\vert u\vert_{X_t}$ is a non-decreasing function, we deduce that
\begin{align}\label{eqn-1098}
\vert u\vert_{X_t}\geq 2n,\;\; \mbox{ if } t>\tau_{2n}(u)  \mbox{  and  }\vert u\vert_{X_t}\leq 2n,\;\; \mbox{ if }  t\leq\tau_{2n}(u).
\end{align}
Hence,  by  the definition of $\theta_n$, see \autoref{def-theta}, we infer that
\begin{align}\label{eq-theta=0}
\theta_n(\vert u\vert_{X_t})=0, \text{ for } t>\tau_{2n}(u),
\end{align}
and
\begin{align}\label{eqn-theta leq 1}
\vert\theta_n(\vert u\vert_{X_t})\vert\leq 1,\;\;t\in[0,\tau_{2n}(u)].
\end{align}
Moreover, by \eqref{eqn-1098} and the definition of the norm $\vert\cdot\vert_{X_T}$, see \eqref{norm X_T}, we infer
\begin{equation}\label{eqn-u_V DA-bound}
\begin{aligned}
&\sup _{0\leq t\leq \tau_{2n}(u)}\vert u(t) \vert_\rV^2\leq 4n^2,\\
&\int_0^{\tau_{2n}(u)} \vert u(t)\vert_{D(\rA)}^2 \,d t\leq 4n^2.
\end{aligned}
\end{equation}

In the following, we show that the truncated version of the nonlinear term $B$ is both bounded and Lipschitz continuous.

\begin{lemma}\label{lem-Phi_T,F^n-bounded}
Define $\Phi_{T,B}^n:X_T \rightarrow L^2(0,T;\rH)$ by
\begin{align}\label{eq-Phi_T,B}
(\Phi_{T,B}^nu)(t):=\theta_n(\vert u\vert_{X_t})B(u(t)),\;\;t\in[0,T].
\end{align}
Then,
\begin{trivlist}
\item[(i)] the map
$\Phi_{T,B}^n(u)$ is bounded, i.e. there exist a constant $C >0$, such that
for every $u \in X_T$,
\begin{align}\label{eqn-Phi_{T,F}^n norm}
\Vert\Phi_{T,B}^n(u)\Vert_{L^2(0,T;\rH)} \leq Cn^2T^{1/4}.
\end{align}
\item[(ii)]
$\Phi_{T,B}^n(u)$ is Lipschitz. More precisely,   there exists a constant $C>0$ such that
for all $u,\rv \in X_T$,
\begin{align}\label{eqn-Phi_{T,F}^n Lipschitz}
\Vert \Phi_{T,B}^n(u)-\Phi_{T,B}^n(\rv)\Vert_{L^2(0,T;\rH)} \leq C T^{1/4}n \vert u-\rv\vert_{X_T}.
\end{align}
\end{trivlist}
\end{lemma}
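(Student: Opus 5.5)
Both parts follow the same scheme: use the cut-off to reduce to the time interval $[0,\tau_{2n}(u)]$, on which \eqref{eqn-u_V DA-bound} gives a priori bounds. Then combine \autoref{lem-inequality} with Hölder in time; this is where the factor $T^{1/4}$ comes from.

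\emph{Part (i).} By \eqref{eq-theta=0}, the integrand vanishes for $t>\tau_{2n}(u)$, and by \eqref{eqn-theta leq 1} we have $\theta_n\le1$ on $[0,\tau_{2n}(u)]$. Writing $\tau:=\tau_{2n}(u)$, we get
\[
\Vert\Phi^n_{T,B}(u)\Vert^2_{L^2(0,T;\rH)}\le\int_0^{\tau}\vert B(u(t))\vert_\rH^2\,dt\le C\int_0^{\tau}\vert u(t)\vert_{D(\rA)}\vert u(t)\vert_\rV^3\,dt ,
\]
using \eqref{eqn-Fu-0}. Bound $\sup_{t\le\tau}\vert u\vert_\rV^3\le(2n)^3$. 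For the remaining factor, Cauchy--Schwarz and \eqref{eqn-u_V DA-bound} give
\[
\int_0^\tau\vert u\vert_{D(\rA)}\,dt\le \tau^{1/2}\Big(\int_0^\tau\vert u\vert^2_{D(\rA)}\,dt\Big)^{1/2}\le 2n\,T^{1/2}.
\]
Together this is $\le Cn^4T^{1/2}$. Taking square roots yields \eqref{eqn-Phi_{T,F}^n norm}.

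\emph{Part (ii).} Let $\tau_u:=\tau_{2n}(u)$ and $\tau_\rv:=\tau_{2n}(\rv)$, and assume by symmetry that $\tau_u\le\tau_\rv$. On $(\tau_\rv,T]$ both terms vanish. Split the difference as
\[
\theta_n(\vert u\vert_{X_t})\big(B(u)-B(\rv)\big)+\big(\theta_n(\vert u\vert_{X_t})-\theta_n(\vert \rv\vert_{X_t})\big)B(\rv).
\]
Since $\theta_n(\vert u\vert_{X_t})=0$ for $t>\tau_u$, the first term lives on $[0,\tau_u]$. The second lives on $[0,\tau_\rv]$, because for $t>\tau_\rv\ge\tau_u$ both cut-offs vanish.

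For the second term, \autoref{lemma-theta} and the reverse triangle inequality give
\[
\vert\theta_n(\vert u\vert_{X_t})-\theta_n(\vert\rv\vert_{X_t})\vert\le \tfrac2n\vert u-\rv\vert_{X_t}\le\tfrac2n\vert u-\rv\vert_{X_T}.
\]
By part (i) applied on $[0,\tau_\rv]$, $\big(\int_0^{\tau_\rv}\vert B(\rv)\vert^2_\rH\,dt\big)^{1/2}\le Cn^2T^{1/4}$. The product is $\le Cn\,T^{1/4}\vert u-\rv\vert_{X_T}$.

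For the first term, use \eqref{eqn-Fu-Fv-0} on $[0,\tau_u]$. The piece $\vert u\vert_{D(\rA)}^{1/2}\vert u\vert^{1/2}_\rV\vert u-\rv\vert_\rV$ has squared $L^2$-norm at most
\[
2n\,\vert u-\rv\vert_{X_T}^2\int_0^{\tau_u}\vert u\vert_{D(\rA)}\,dt\le 4n^2T^{1/2}\vert u-\rv\vert^2_{X_T},
\]
which is of the required size. The piece $\vert u-\rv\vert_{D(\rA)}^{1/2}\vert u-\rv\vert_\rV^{1/2}\vert\rv\vert_\rV$ is the main obstacle, because $\vert\rv\vert_\rV$ must be controlled on $[0,\tau_u]$ while only $\tau_u\le\tau_\rv$ is known. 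Since $\tau_u\le\tau_\rv$, we nevertheless have $\vert\rv\vert_\rV\le 2n$ there. Then Hölder gives
\[
\int_0^{\tau_u}\vert u-\rv\vert_{D(\rA)}\vert u-\rv\vert_\rV\,dt\le T^{1/2}\vert u-\rv\vert_{X_T}^2,
\]
so this piece also contributes $\le CnT^{1/4}\vert u-\rv\vert_{X_T}$. This ordering trick (WLOG $\tau_u\le\tau_\rv$) is exactly what lets the second summand of \eqref{eqn-Fu-Fv-0} be controlled by $\vert\rv\vert_\rV$; otherwise one would swap the roles of $u$ and $\rv$ in \eqref{eqn-Fu-Fv-0}, which is symmetric up to relabeling. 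Summing the three contributions gives \eqref{eqn-Phi_{T,F}^n Lipschitz}.
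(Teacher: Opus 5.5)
Your proof is correct and is essentially the paper's argument. Part (i) is identical. In part (ii) you use the same decomposition: a difference of cut-offs times $B$ of the function with the later exit time, controlled by part (i), plus a single cut-off times $B(u)-B(\rv)$ on the shorter interval, where both $u$ and $\rv$ obey the $2n$ bounds and \eqref{eqn-Fu-Fv-0} applies. The only change is that the roles of $u$ and $\rv$ are interchanged relative to the paper's normalization $\tau_{2n}(u)\geq\tau_{2n}(\rv)$.
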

\begin{proof}[Proof of (i) in \autoref{lem-Phi_T,F^n-bounded}]
Inequality \eqref{eqn-Fu-0} implies that
\begin{equation}\label{eqn-4.29}
\begin{aligned}
\Vert\Phi_{T,B}^n(u)\Vert_{L^2(0,T;\rH)}^2&= \int_0^T \vert \theta_n(\vert u\vert_{X_t})\vert^2 \vert B(u(t))\vert_\rH^2\,dt\\
&\leq C\int_0^T \vert\theta_n(\vert u\vert_{X_t})\vert^2\vert u (t)\vert _{D(\rA)}\vert u(t)\vert_\rV^{3}\,dt.
\end{aligned}
\end{equation}
By \eqref{eq-theta=0} and \eqref{eqn-theta leq 1} we infer
\begin{equation}\label{eqn-4.32}
\begin{aligned}
\int_0^T \vert \theta_n(\vert u\vert_{X_t})\vert^2\vert u (t)\vert _{D(\rA)}\vert u(t)\vert_\rV^{3}\,dt&=\int_0^{\tau_{2n}(u)} \vert \theta_n(\vert u\vert_{X_t})\vert^2\vert u  (t)\vert _{D(\rA)}\vert u (t) \vert_\rV^{3}\,dt\\
&\leq \int_0^{\tau_{2n}(u)} \vert u  (t)\vert _{D(\rA)}\vert u (t) \vert_\rV^{3}\,dt.
\end{aligned}
\end{equation}
Then by inequalities \eqref{eqn-4.29}, \eqref{eqn-4.32} and \eqref{eqn-u_V DA-bound}, and the H\"{o}lder inequality we infer that
\begin{equation}\label{eqn-Phi_T,B}
\begin{aligned}
\Vert\Phi_{T,B}^n(u)\Vert_{L^2(0,T;\rH)}^2
&\leq C\int_0^{\tau_{2n}(u)} \vert u(t) \vert _{D(\rA)}\vert u (t) \vert_\rV^{3}\,dt\\
&\leq C\sup_{t\in [0,\tau_{2n}(u))} \vert u(t) \vert_\rV^3\int_0^{\tau_{2n}(u)} \vert u(t) \vert _{D(\rA)}\,dt\\
&\leq C T ^{1/2}\sup_{t\in [0,\tau_{2n}(u))} \vert u(t) \vert_\rV^3(\int_0^{\tau_{2n}(u)} \vert u(t)\vert_{D(\rA)}^2\,dt)^{1/2}\\
&\leq CT ^{1/2}\vert u\vert_{X_{\tau_{2n}(u)}}^4\leq C n^4 T ^{1/2}.
\end{aligned}
\end{equation}
We complete the proof of (i) in \autoref{lem-Phi_T,F^n-bounded}
\end{proof}
\begin{proof}[proof of (ii) in \autoref{lem-Phi_T,F^n-bounded}]
Without loss of generality we assume that $\tau_{2n}(u) \geq \tau_{2n}(\rv)$. Then by \eqref{eq-theta=0} we infer
\begin{align*}
&\theta_n(\vert u\vert_{X_t})=\theta_n(\vert \rv\vert_{X_t})=0,\;\;t>\tau_{2n}(u),\\
&\theta_n(\vert \rv\vert_{X_t})=0,\;\;t>\tau_{2n}(\rv).
\end{align*}
Therefore, by the Minkowski inequality, \autoref{lemma-theta} and \eqref{eqn-theta leq 1}, we infer
\begin{align}\label{eqn-Phi_T,B-difference}
&\Vert \Phi_{T,B}^n(u)-\Phi_{T,B}^n(\rv)\Vert_{L^2(0,T;\rH)}\\
&= \left(\int_0^{T} \vert(\theta_n(\vert u\vert_{X_t})-\theta_n(\vert \rv\vert_{X_t})) B(u(t))+\theta_n(\vert \rv\vert_{X_t}) (B(u(t))-B(\rv(t)))\vert_\rH^2\,dt\right)^{1/2}\nonumber\\
&\leq \left(\int_0^{\tau_{2n}(u)} \vert\theta_n(\vert u\vert_{X_t})-\theta_n(\vert \rv\vert_{X_t})\vert^2 \vert B(u(t))\vert_\rH^2\,dt\right)^{1/2}\nonumber\\
&+\left(\int_0^{\tau_{2n}(\rv)} \vert \theta_n(\vert \rv\vert_{X_t})\vert^2 \vert B(u(t))-B(\rv(t))\vert_\rH^2\,dt\right)^{1/2}\nonumber\\
&\leq \left(\int_0^{\tau_{2n}(u)} \frac{4}{n^2} \Big\vert\vert u\vert_{X_t}-\vert \rv\vert_{X_t}\Big\vert^2\vert B(u(t))\vert_\rH^2\,dt\right)^{1/2}+\left(\int_0^{\tau_{2n}(\rv)}  \vert B(u(t))-B(\rv(t))\vert_\rH^2\,dt\right)^{1/2}.\nonumber
\end{align}
By \autoref{lem-inequality}, the triangle inequality, the H\"older inequality and \eqref{eqn-u_V DA-bound}, we infer that
\begin{equation}\label{eqn-I_1}
\begin{aligned}
&\int_0^{\tau_{2n}(u)} \frac{4}{n^2} \Big\vert\vert u\vert_{X_t}-\vert \rv\vert_{X_t}\Big\vert^2\vert B(u(t))\vert_\rH^2\,dt\\
&\leq C\int_0^{\tau_{2n}(u)}\frac{1}{n^2} \Big\vert\vert u\vert_{X_t}-\vert \rv\vert_{X_t}\Big\vert^2 \vert u(t) \vert _{D(\rA)}\vert u(t) \vert _\rV^{3}\,dt\\
&\leq C\frac{1}{n^2}\vert u-\rv\vert_{X_T}^2 \sup_{0\leq t\leq \tau_{2n}(u)}\vert u(t) \vert _\rV^3\int_0^{\tau_{2n}(u)} \vert u(t) \vert _{D(\rA)}\,dt\\
&\leq CT^{1/2}\frac{1}{n^2}\vert u-\rv\vert_{X_T}^2 \sup_{0\leq t\leq \tau_{2n}(u)}\vert u(t) \vert _\rV^3\left(\int_0^{\tau_{2n}(u)} \vert u(t) \vert ^2_{D(\rA)}\,dt\right)^{1/2}\\
&\leq CT^{1/2}n^2\vert u-\rv\vert_{X_T}^2.
\end{aligned}
\end{equation}

By  \autoref{lem-inequality}, we infer that
\begin{align*}
&\int_0^{\tau_{2n}(\rv)}  \vert B(u(t))-B(\rv(t))\vert_\rH^2\,dt\\
&\leq C\int_{0}^{\tau_{2n}(\rv)}
(\vert u(t) \vert _{D(\rA)}^{1/2}\vert u (t)\vert_\rV^{1/2}\vert u(t)-\rv(t) \vert_\rV+\vert u(t)-\rv(t) \vert _{D(\rA)}^{1/2}\vert u(t)-\rv(t) \vert _\rV^{1/2}\vert \rv (t)\vert_\rV )^2 \,dt\\
&\leq C\int_{0}^{\tau_{2n}(\rv)}
\vert u(t) \vert _{D(\rA)}\vert u(t) \vert _\rV\vert u(t)-\rv(t) \vert _\rV^2 \,dt\\
&+C\int_{0}^{\tau_{2n}(\rv)} \vert u(t)-\rv(t) \vert _{D(\rA)}\vert u(t)-\rv(t) \vert _\rV\vert \rv(t) \vert _\rV^2 \,dt\\
&\leq C \sup_{0\leq t\leq \tau_{2n}(\rv)} \vert u(t) \vert _\rV \sup_{0\leq t\leq \tau_{2n}(\rv)} \vert u(t)-\rv(t) \vert _\rV^2 \int_{0}^{\tau_{2n}(\rv)}
\vert u(t) \vert _{D(\rA)}\,dt\\
&+C \sup_{0\leq t\leq \tau_{2n}(\rv)} \vert u(t)-\rv (t)\vert _\rV  \sup_{0\leq t\leq \tau_{2n}(\rv)} \vert \rv(t) \vert_\rV^2
\int_{0}^{\tau_{2n}(\rv)}
\vert u(t)-\rv(t) \vert _{D(\rA)} \,dt.
\end{align*}
Therefore, by the H\"older inequality and \eqref{eqn-u_V DA-bound} we infer
\begin{align}\label{eqn-I_2}
&\int_0^{\tau_{2n}(\rv)}  \vert B(u(t))-B(\rv(t))\vert_\rH^2\,dt\\
\nonumber
&\leq CT^{1/2} \sup_{0\leq t\leq \tau_{2n}(\rv)} \vert u(t) \vert _\rV \sup_{0\leq t\leq \tau_{2n}(\rv)} \vert u(t)-\rv(t) \vert_\rV^2 \left(\int_{0}^{\tau_{2n}(\rv)}
\vert u(t) \vert ^2_{D(\rA)}\,dt\right)^{1/2}\\
\nonumber
&+CT^{1/2} \sup_{0\leq t\leq \tau_{2n}(\rv)} \vert u(t)-\rv (t)\vert_\rV  \sup_{0\leq t\leq \tau_{2n}(\rv)} \vert \rv(t) \vert_\rV^2
\left(\int_{0}^{\tau_{2n}(\rv)}
\vert u(t)-\rv(t) \vert ^2_{D(\rA)} \,dt\right)^{1/2}\\
\nonumber
&\leq CT^{1/2}n^2 \vert u-\rv \vert_{X_T}^2.
\end{align}

By \eqref{eqn-Phi_T,B-difference}, \eqref{eqn-I_1} and \eqref{eqn-I_2} we conclude that
\begin{equation}
 \begin{aligned}
\Vert \Phi_{T,B}^n(u)-\Phi_{T,B}^n(\rv)\Vert_{L^2(0,T;\rH)}
&\leq C T^{1/4}n \vert u-\rv\vert_{X_T}.
 \end{aligned}
\end{equation}
This proves the desired Lipschitz bound.
\end{proof}

We now prove that the truncated diffusion term in \eqref{NS-approximated} is bounded and Lipschitz continuous.
\begin{prp}\label{prp-phi g-1}
Define $\Phi_{T,g}^n:X_T \rightarrow L^2(0,T;\rV)$ by
\begin{align}\label{eq-Phi_{T,g}^n-1}
\Phi_{T,g}^n(u)(t):= \theta_n(\vert u\vert_{X_t})g(u(t)),\;\;t\in[0,T].
\end{align}
Then $\Phi_{T,g}^n(u)$ is bounded and Lipschitz. More precisely,
for all  $u,\rv \in X_T$, there exists constants $C=C(n)>0$ such that
\begin{align}
\Vert\Phi_{T,g}^n(u)\Vert_{L^2(0,T;\rV)}^2 &\leq C T,  \label{eqn-Phi_{T,g}^n norm bound-1}\\
\Vert \Phi_{T,g}^n(u)-\Phi_{T,g}^n(\rv)\Vert_{L^2(0,T;\rV)}& \leq CT^{1/2}\vert u-\rv
\vert_{X_T}.\label{eqn-Phi_{T,g}^n Lipschitz-1}
\end{align}
\end{prp}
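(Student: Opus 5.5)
The plan mirrors the proof of \autoref{lem-Phi_T,F^n-bounded}, but uses \autoref{ass-g-loc Lip} and \autoref{lem-g-boundedness on balls} with $R=2n$ in place of \autoref{lem-inequality}. Throughout we use the ``stopping times'' $\tau_{2n}(\cdot)$ from \eqref{eq-tau_u}. By \eqref{eq-theta=0}, \eqref{eqn-theta leq 1} and \eqref{eqn-u_V DA-bound}, the truncation vanishes after $\tau_{2n}(u)$. Before that time we have $\vert u(t)\vert_\rV\le 2n$, so $u(t)$ lies in the ball of radius $R=2n$. All constants will depend on $n$ through $C_6(2n)$.

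\emph{Boundedness.} Since $\theta_n\in[0,1]$ and $\theta_n(\vert u\vert_{X_t})=0$ for $t>\tau_{2n}(u)$, \autoref{lem-g-boundedness on balls} with $R=2n$ gives
\[
\Vert\Phi_{T,g}^n(u)\Vert_{L^2(0,T;\rV)}^2\le\int_0^{\tau_{2n}(u)}\vert g(u(t))\vert_\rV^2\,dt\le C_6(2n)^2\int_0^{\tau_{2n}(u)}\vert u(t)\vert_\rV^2\,dt\le 4n^2C_6(2n)^2\,T .
\]
This is \eqref{eqn-Phi_{T,g}^n norm bound-1} with $C=4n^2C_6(2n)^2$.

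\emph{Lipschitz bound.} Assume without loss of generality that $\tau_{2n}(u)\ge\tau_{2n}(\rv)$. We split
\[
\Phi_{T,g}^n(u)-\Phi_{T,g}^n(\rv)=\big(\theta_n(\vert u\vert_{X_t})-\theta_n(\vert \rv\vert_{X_t})\big)g(u(t))+\theta_n(\vert \rv\vert_{X_t})\big(g(u(t))-g(\rv(t))\big).
\]
The first term is supported in $[0,\tau_{2n}(u)]$, and the second in $[0,\tau_{2n}(\rv)]$.

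For the first term, \autoref{lemma-theta} and the reverse triangle inequality give $\vert\vert u\vert_{X_t}-\vert\rv\vert_{X_t}\vert\le\vert u-\rv\vert_{X_t}\le\vert u-\rv\vert_{X_T}$. Here we use that $\vert\cdot\vert_{X_t}$ is a norm. Combining this with \autoref{lem-g-boundedness on balls}, the first term is bounded in $L^2(0,T;\rV)$ by $\frac{2}{n}\vert u-\rv\vert_{X_T}\,2nC_6(2n)\,T^{1/2}$.

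For the second term, on $[0,\tau_{2n}(\rv)]$ both $\vert u(t)\vert_\rV\le 2n$ and $\vert \rv(t)\vert_\rV\le 2n$, because $\tau_{2n}(\rv)\le\tau_{2n}(u)$. Hence \eqref{eqn-g Lipschitz-100} with $R=2n$ bounds it by $C_6(2n)\,T^{1/2}\sup_t\vert u(t)-\rv(t)\vert_\rV\le C_6(2n)T^{1/2}\vert u-\rv\vert_{X_T}$.

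Minkowski's inequality then yields \eqref{eqn-Phi_{T,g}^n Lipschitz-1} with $C=5C_6(2n)$.

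The only delicate point is the second term: the local Lipschitz constant may only be used while both paths stay in the ball of radius $2n$. This is exactly why the WLOG ordering of the stopping times is needed, so that the cutoff of the smaller one, $\theta_n(\vert\rv\vert_{X_t})$, multiplies the difference. Apart from that, the argument is routine.
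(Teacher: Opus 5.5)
Your proof is correct and follows the paper's argument essentially step for step: the same localisation via $\tau_{2n}(\cdot)$, the same WLOG ordering $\tau_{2n}(u)\ge\tau_{2n}(\rv)$, and the same split into a $\theta_n$-difference term (controlled by \autoref{lemma-theta} and \autoref{lem-g-boundedness on balls}) and a $g$-difference term (controlled by \autoref{ass-g-loc Lip} with $R=2n$). Your explicit constants are slightly more careful than the paper's, which writes $T\,2nC_6(2n)$ where $T\,(2nC_6(2n))^2$ is meant and has a sign slip in the decomposition.
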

\begin{proof}
Let us choose and fix $u\in X_T$, by the definition of the stopping times \eqref{eq-tau_u}, we infer
\begin{align}\label{eqn-u Xtau norm}
\vert u\vert_{X_{\tau_{2n}(u)}}\leq 2n.
\end{align}
By definition of $X_{\tau_{2n}(u)}$ we infer that
\begin{align}\label{eqn-u Xtau norm-2}
\sup_{t \in [0, \tau_{2n}(u)]} \vert u(t)\vert_\rV\leq 2n    .
\end{align}
Thus,  by property  \eqref{eqn-g bounded-100} we infer that
\begin{align}\label{eqn-Phi bound}
\sup_{t\in[0,\tau_{2n}(u)]}\vert g(u(t))\vert_\rV   \leq 2nC_6(2n).
\end{align}
Therefore, by \eqref{eq-theta=0}, \eqref{eqn-theta leq 1} and \eqref{eqn-u_V DA-bound}, there exists $C=C(n)>0$ such that
\begin{equation}
\begin{aligned}
\Vert\Phi_{T,g}^n(u)\Vert_{L^2(0,T;\rV)}^2&= \int_0^T\Big \vert \theta_n(\vert u\vert_{X_t})g(u(t))\Big \vert_\rV^2\,dt   \leq \int_0^{\tau_{2n}(u)} \vert g(u (t)) \vert_\rV^2\,dt \\
&\leq T 2nC_6(2n)=:CT.
\end{aligned}
\end{equation}

For the second inequality, let us choose and fix $u\in X_T$.
Without loss of generality we assume $\tau_{2n}(u) \geq \tau_{2n}(\rv)$. By \eqref{eq-theta=0}, \eqref{eqn-theta leq 1} and the Minkowski inequality we obtain
\begin{align}\label{eqn-4.34}
&\Vert \Phi_{T,g}^n(u)-\Phi_{T,g}^n(\rv)\Vert_{L^2(0,T;\rV)}\\
&= \left(\int_0^T\Big \vert (\theta_n(\vert u\vert_{X_t})-\theta_n(\vert \rv\vert_{X_t}))g(u(t))-\theta_n(\vert \rv\vert_{X_t})(g(u(t))-g(\rv(t)))\Big \vert_\rV^2\,dt\right)^{1/2} \nonumber\\
&\leq \left(\int_0^{\tau_{2n}(u) }\vert \theta_n(\vert u\vert_{X_t})-\theta_n(\vert \rv\vert_{X_t}) \vert^2\vert g(u(t))\vert_\rV^2\,dt\right)^{1/2}+\left(\int_0^{\tau_{2n}(\rv) }\vert g(u(t))-g(\rv(t)) \vert_\rV^2\,dt\right)^{1/2}.\nonumber
\end{align}
In view of \autoref{lemma-theta}, the triangle inequality, inequalities \eqref{eqn-u_V DA-bound} and \eqref{eqn-Phi bound}, there exists $C=C(n)>0$ such that
\begin{align}\label{eqn-I11}
\int_0^{\tau_{2n}(u) }\vert \theta_n(\vert u\vert_{X_t})-\theta_n(\vert \rv\vert_{X_t}) \vert^2\vert g(u(t))\vert_\rV^2\,dt&\leq \int_0^{\tau_{2n}(u) }\frac{4}{n^2}\Big\vert \vert u\vert_{X_t}-\vert\rv
\vert_{X_t}\Big\vert^2\vert g(u(t))\vert^2\,dt\nonumber\\
&\leq CT\vert u-\rv
\vert_{X_T}^2\sup_{0\leq t\leq \tau_{2n}(u)}\vert g(u(t))\vert_{\rV}^2  \nonumber\\
&\leq CT\vert u-\rv
\vert_{X_T}^2.
\end{align}
By \autoref{ass-g-loc Lip} we infer that there exists $C=C(n)>0$ such that
\begin{align}\label{eqn-I21}
\int_0^{\tau_{2n}(\rv) }\vert g(u(t))-g(\rv(t)) \vert_\rV^2\,dt&\leq C\int_0^{\tau_{2n}(\rv) }\vert u(t)-\rv(t)\vert_\rV^2\,dt\leq CT\sup_{t\in[0,T]}\vert u(t)-\rv(t)
\vert_{\rV}^2 \nonumber\\
&\leq CT\vert u-\rv
\vert_{X_T}^2.
\end{align}
Therefore, substituting \eqref{eqn-I21} and \eqref{eqn-I11} into
\eqref{eqn-4.34} yields \eqref{eqn-Phi_{T,g}^n Lipschitz-1}.
We complete the proof of \autoref{prp-phi g-1}.
\end{proof}

Combining the semigroup estimates, see \autoref{prp-S}, with the above Lipschitz estimates, we define the map $\Psi_{T,u_0}^n$, which becomes a contraction for sufficiently small $T$.

\begin{prp}\label{prop-Lipschitz of Psi}
Define a map $\Psi_{T, u_0}^n$ : $\mathbb{M}^2\left(X_T\right) \rightarrow \mathbb{M}^2\left(X_T\right)$ by
\begin{align}\label{eq-Psi_{T, u_0}^n}
\Psi_{T, u_0}^n(u)=S u_0-S \ast \Phi_{T, B}^n(u)+S \diamond \Phi_{T,g}^n(u),
\end{align}
where $S \ast \cdot$ and $S \diamond\cdot$ is defined in \autoref{prp-S}. Then there exists $C_n^\ast:=C_n^\ast(n)>0$ such that for all $u, \rv \in \mathbb{M}^2\left(X_T\right)$,
\begin{align}\label{eqn-Psi_{T, u_0}^n contraction}
\vert\Psi_{T, u_0}^n\left(u\right)-\Psi_{T, u_0}^n\left(\rv\right)\vert_{\mathbb{M}^2\left(X_T\right)} \leq C_n^\ast(T^{1/2}+T^{1/4}) \vert u-\rv\vert_{\mathbb{M}^2\left(X_T\right)} .
\end{align}
\end{prp}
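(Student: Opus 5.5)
The plan is to write the difference $\Psi_{T,u_0}^n(u)-\Psi_{T,u_0}^n(\rv)$ and observe that the deterministic term $Su_0$ cancels. This leaves
\[
\Psi_{T,u_0}^n(u)-\Psi_{T,u_0}^n(\rv)= -S\ast\big(\Phi_{T,B}^n(u)-\Phi_{T,B}^n(\rv)\big)+S\diamond\big(\Phi_{T,g}^n(u)-\Phi_{T,g}^n(\rv)\big).
\]
The two maps $S\ast$ and $S\diamond$ are linear, so each difference of images is the image of a difference. By the triangle inequality in $\mathbb{M}^2(X_T)$ it then suffices to estimate the two terms separately. We fix $T\le T_0$ for some $T_0$, say $T_0=1$, so that the constants $C_7,C_8$ of \autoref{prp-S} are uniform in $T$. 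This is unnecessary for Poincar\'e domains, where $T_0=\infty$ is allowed.

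For the drift term we work pathwise, $\omega$ by $\omega$. Since $u(\omega),\rv(\omega)\in X_T$ almost surely, \autoref{prp-S}(i) and \autoref{lem-Phi_T,F^n-bounded}(ii) give
\[
\big|S\ast(\Phi_{T,B}^n(u)-\Phi_{T,B}^n(\rv))\big|_{X_T}^2\le C_7\,\Vert \Phi_{T,B}^n(u)-\Phi_{T,B}^n(\rv)\Vert_{L^2(0,T;\rH)}^2\le C_7C^2n^2T^{1/2}|u-\rv|_{X_T}^2 .
\]
Taking expectations and square roots yields the bound $C\,n\,T^{1/4}|u-\rv|_{\mathbb{M}^2(X_T)}$.

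For the stochastic term we first check that $\Phi_{T,g}^n(u)-\Phi_{T,g}^n(\rv)$ belongs to $\mathbb{M}^2(0,T;\rV)$. Progressive measurability follows because $t\mapsto |u|_{X_t}$ is adapted and continuous, and $\theta_n$ and $g$ are continuous. Square integrability follows from the deterministic bound \eqref{eqn-Phi_{T,g}^n norm bound-1}. We then apply \autoref{prp-S}(ii) followed by \autoref{prp-phi g-1} pathwise:
\[
\mathbb{E}\big|S\diamond(\Phi_{T,g}^n(u)-\Phi_{T,g}^n(\rv))\big|_{X_T}^2\le C_8\,\mathbb{E}\Vert\Phi_{T,g}^n(u)-\Phi_{T,g}^n(\rv)\Vert_{L^2(0,T;\rV)}^2\le C_8C(n)^2T\,\mathbb{E}|u-\rv|_{X_T}^2 .
\]
This gives the bound $C(n)T^{1/2}|u-\rv|_{\mathbb{M}^2(X_T)}$.

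Summing the two bounds gives \eqref{eqn-Psi_{T, u_0}^n contraction} with $C_n^\ast=\max\{C_7^{1/2}Cn,\;C_8^{1/2}C(n)\}$. The main point needing care is not the estimates, which are direct combinations of earlier lemmas. It is that $\Psi^n_{T,u_0}$ actually maps $\mathbb{M}^2(X_T)$ into itself, together with the measurability and adaptedness of $\Phi^n_{T,g}(u)$ required to use the stochastic convolution estimate. The self-mapping property follows from \autoref{prp-S}(iii) for $Su_0$, using $\mathbb{E}|u_0|_\rV^2<\infty$, together with the uniform bounds \eqref{eqn-Phi_{T,F}^n norm} and \eqref{eqn-Phi_{T,g}^n norm bound-1}. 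A second point is that the Lipschitz constants in \autoref{lem-Phi_T,F^n-bounded} and \autoref{prp-phi g-1} are deterministic and independent of $\omega$; this is exactly what allows passing from pathwise to $L^2(\Omega)$ estimates.
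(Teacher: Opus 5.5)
Your proposal is correct and follows the paper's proof essentially verbatim: you cancel $Su_0$, split by the triangle inequality, and combine \autoref{prp-S}(i),(ii) with \autoref{lem-Phi_T,F^n-bounded}(ii) and \autoref{prp-phi g-1} to get the $T^{1/4}$ and $T^{1/2}$ factors. Your additional care only makes explicit details the paper leaves implicit: restricting to $T\le T_0$ so that $C_7,C_8$ are uniform, writing $C_7^{1/2},C_8^{1/2}$ (rather than $C_7,C_8$) after taking square roots, and checking progressive measurability of $\Phi^n_{T,g}(u)$ and the self-mapping property.
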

\begin{proof}[Proof of Proposition \ref{prop-Lipschitz of Psi}]
By the triangle inequality, \autoref{prp-S}, \autoref{lem-Phi_T,F^n-bounded} and \autoref{prp-phi g-1} we infer that there exists $C_n^\ast:=C_n^\ast(n)>0$ such that for all $u, \rv \in \mathbb{M}^2\left(X_T\right)$,
\begin{equation}\label{eqn-Phi-difference}
\begin{aligned}
&\vert\Psi_{T, u_0}^n\left(u\right)-\Psi_{T, u_0}^n\left(\rv\right)\vert_{\mathbb{M}^2\left(X_T\right)} \\
&\leq
[\mathbb{E}\vert S \ast [\Phi_{T, B}^n(u)-\Phi_{T, B}^n(\rv)]\vert_{X_T}^2]^{1/2}+ \vert S \diamond [\Phi_{T, g}^n(u)-\Phi_{T, g}^n(\rv)]\vert_{\mathbb{M}^2\left(X_T\right)}\\
&\leq C_7[\mathbb{E}\Vert \Phi_{T, B}^n(u)-\Phi_{T, B}^n(\rv) \Vert_{L^2(0,T;\rH)}^2]^{1/2}+C_8\vert \Phi_{T, g}^n(u)-\Phi_{T, g}^n(\rv) \vert_{\mathbb{M}^2(0, T ; \rV)}\\
&\leq C_n^\ast T^{1/4} [\mathbb{E}\vert u-\rv\vert_{X_T}^2]^{1/2} + C_n^\ast T^{1/2}[\mathbb{E}\vert u-\rv
\vert_{X_T}^2]^{1/2}\\
&= C_n^\ast(T^{1/2}+T^{1/4})\vert u-\rv\vert_{\mathbb{M}^2\left(X_T\right)}.
\end{aligned}
\end{equation}
The proof is complete.
\end{proof}

With the Lipschitz continuity of $\Psi_{T,u_0}^n$ established in \autoref{prop-Lipschitz of Psi}, we apply the Banach fixed-point theorem to obtain a unique local solution to \eqref{NS-approximated}.
\begin{cor}\label{cor-existence of u^n}
For every $n\in\mathbb{N}$, there exists $T_n>0$ such that the approximated problem \eqref{NS-approximated} has a local solution $(u^n, T_n)$.

Moreover, if $T^{(2)} \leq T^{(1)} \leq T_n$ and $u^{n,i} \in X_{T^{(i)}}$ satisfies
$\Psi_{T^{(i)}, u_0}^n(u^{n,i})=u^{n,i}$, $i=1,2$, then the restriction of $u^{n,1}$ to $[0,T^{(2)}]$ is equal to $u^{n,2}$.
\end{cor}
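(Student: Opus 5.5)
The plan is to apply the Banach fixed-point theorem to the map $\Psi_{T,u_0}^n$ from \autoref{prop-Lipschitz of Psi} on the space $\mathbb{M}^2(X_T)$, and to read off both assertions of \autoref{cor-existence of u^n} from it.

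\textbf{Step 1: $\Psi_{T,u_0}^n$ maps $\mathbb{M}^2(X_T)$ into itself.} Fix $n$ and $T\in(0,1]$, and take $u\in\mathbb{M}^2(X_T)$.
\begin{itemize}
\item Since $\mathbb{E}\vert u_0\vert_\rV^2<\infty$, \autoref{prp-S}(iii) gives $Su_0\in\mathbb{M}^2(X_T)$ with norm at most $C_9(\mathbb{E}\vert u_0\vert_\rV^2)^{1/2}$. Progressive measurability is clear because $u_0$ is $\mathscr F_0$-measurable.
\item By \autoref{lem-Phi_T,F^n-bounded}(i), $\Phi^n_{T,B}(u)$ lies in $L^2(0,T;\rH)$ pathwise with a deterministic bound $Cn^2T^{1/4}$. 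Then \autoref{prp-S}(i) puts $S\ast\Phi^n_{T,B}(u)$ in $\mathbb{M}^2(X_T)$.
\item By \autoref{prp-phi g-1}, $\Phi^n_{T,g}(u)$ is bounded in $L^2(0,T;\rV)$. It is also progressively measurable: $t\mapsto \vert u\vert_{X_t}$ is adapted and continuous, and $g$ is continuous on $\rV$. So $\Phi^n_{T,g}(u)\in\mathbb{M}^2(0,T;\rV)$, and \autoref{prp-S}(ii) gives $S\diamond\Phi^n_{T,g}(u)\in\mathbb{M}^2(X_T)$.
\end{itemize}

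\textbf{Step 2: contraction and existence.} Let $C_n^\ast$ be the constant of \autoref{prop-Lipschitz of Psi}. Because the constants in \autoref{prp-S} are uniform for $T\le T_0=1$, $C_n^\ast$ does not depend on $T\in(0,1]$. Choose $T_n\in(0,1]$ so small that $C_n^\ast(T_n^{1/2}+T_n^{1/4})\le \tfrac12$. Then $\Psi^n_{T_n,u_0}$ is a strict contraction on the Banach space $\mathbb{M}^2(X_{T_n})$. The Banach fixed-point theorem yields a unique fixed point $u^n$, which is exactly a solution of \eqref{NS-approximated} on $[0,T_n]$. The same choice works for every $T\le T_n$, since $T\mapsto T^{1/2}+T^{1/4}$ is increasing.

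\textbf{Step 3: consistency under restriction.} Let $T^{(2)}\le T^{(1)}\le T_n$ and let $u^{n,i}$ be fixed points on $[0,T^{(i)}]$. The key observation is that the maps are causal. For $t\le T^{(2)}$, the truncation $\theta_n(\vert u\vert_{X_t})$ and the integrands on $[0,t]$ depend only on $u\vert_{[0,t]}$, and the norm $\vert u\vert_{X_t}$ is computed on $[0,t]$ only. Hence
\[
\Psi^n_{T^{(1)},u_0}(u^{n,1})\big\vert_{[0,T^{(2)}]}=\Psi^n_{T^{(2)},u_0}\big(u^{n,1}\vert_{[0,T^{(2)}]}\big).
\]
This requires the stochastic convolution to be compatible with restriction in time. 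That holds because $S\diamond\xi$ on $[0,t]$ uses only $\xi\vert_{[0,t]}$, and the identity is understood a.s.\ as an equality of processes. It follows that $u^{n,1}\vert_{[0,T^{(2)}]}$ is a fixed point of $\Psi^n_{T^{(2)},u_0}$ in $\mathbb{M}^2(X_{T^{(2)}})$. Since $T^{(2)}\le T_n$, that map is a contraction, so its fixed point is unique and the restriction equals $u^{n,2}$.

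\textbf{Main obstacle.} I expect the hardest step to be justifying that $C_n^\ast$ can be taken independent of $T$ on $(0,1]$ and that $\Psi^n$ preserves progressive measurability. In particular, one needs $\omega\mapsto\vert u\vert_{X_t}$ to be $\mathscr F_t$-measurable. I would get this from continuity of paths in $\rV$ together with adaptedness: the supremum over $[0,t]$ reduces to a supremum over rationals in $[0,t]$, and the integral term is an adapted integral. The contraction estimate itself is already provided by \autoref{prop-Lipschitz of Psi}.
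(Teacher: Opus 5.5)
Your proposal is correct and follows the paper's argument: choose $T_n$ with $C_n^\ast(T_n^{1/2}+T_n^{1/4})\le\frac12$, apply the Banach fixed-point theorem to $\Psi^n_{T,u_0}$ on $\mathbb{M}^2(X_T)$, and get the restriction property from uniqueness of the fixed point. You also make explicit several points the paper leaves implicit: that $\Psi^n_{T,u_0}$ maps $\mathbb{M}^2(X_T)$ into itself with the required measurability; that $C_n^\ast$ is uniform for $T\le 1$, which matters because the constants in \autoref{prp-S} depend on $T_0$ outside Poincar\'e domains; and the causality identity that allows uniqueness on $[0,T^{(2)}]$ to be applied.
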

\begin{proof}
Let us take $T_n>0$ such that
\begin{align}\label{eqn-C_n^* T_n}
C_n^\ast(T_n^{1/2}+T_n^{1/4})\leq \frac{1}{2}.
\end{align}
Then in view of \autoref{prop-Lipschitz of Psi}, for every $T \in\left[0, T_n\right]$, the map $\Psi_{T, u_0}^n$ is $\frac{1}{2}-$strict contraction. By applying the Banach Fixed Point Theorem, see \cite[Theorem 9.23]{Rudin+Walter_1976}, the map $\Psi_{T, u_0}^n$ has a unique fixed point, i.e. there exists a unique $u^n \in X_T$, such that $\Psi_{T, u_0}^n(u^n)=u^n$. The uniqueness of the fixed point ensures that the solutions agree on common parts of the intervals. The proof is complete.
\end{proof}

\section{Existence of a local solution}\label{sec_Definition and existence of a local solution}
In this section, we proceed to show that, by introducing a suitable stopping time $\tau_n$, the approximated solution defined in \autoref{sec-A priori estimates} yields a local mild solution to the original problem \eqref{eqn-NS03}.

\begin{prop}\label{prop-local solution}
Assume that $\mathbb{E}\vert u_0\vert_\rV^2 <\infty$. Then the pair  $\left(u^n, \tau_n\right)$ is a local mild solution to the main problem \eqref{eqn-NS03}. The stopping time $\tau_n$ is defined by the following equality:
\begin{equation}\label{eq-tau_n}
\begin{aligned}
\tau_n:=\inf \left\{t \in[0, T_n]: \vert u^n\vert_{X_t} \geq n\right\} ,
\end{aligned}
\end{equation}
where $T_n>0$ is given by \autoref{cor-existence of u^n} and $u^n$ is the solution of the approximated problem \eqref{NS-approximated}. If $\vert u^n\vert_{X_t}(\omega) < n$ for all $t\in[0,T_n]$, we put $\tau_n(\omega)=T_n$.
\end{prop}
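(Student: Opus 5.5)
We verify the items of \autoref{def-local solution} for the pair $(u^n,\tau_n)$, where $u^n\in\mathbb{M}^2(X_{T_n})$ is the fixed point from \autoref{cor-existence of u^n}.

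\textbf{Step 1: $\tau_n$ is an accessible stopping time.} Since $u^n$ has a.s.\ trajectories in $X_{T_n}$ and is $\mathbb{F}$-progressively measurable, the map $t\mapsto |u^n|_{X_t}$ is continuous, non-decreasing and adapted. Hence $\tau_n$, as the hitting time of the closed set $[n,\infty)$, is a stopping time; the usual conditions of \autoref{ass-prob} make this rigorous. As an approximating sequence we take
\[
\tau_{n,m}:=\inf\{t\in[0,T_n]: |u^n|_{X_t}\ge n-\tfrac1m\}\wedge\big(T_n-\tfrac1m\big)^+ .
\]
By continuity, $\tau_{n,m}<\tau_n$ on $\{\tau_n>0\}$ and $\tau_{n,m}\uparrow\tau_n$. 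One caveat: if $|u_0|_\rV\ge n$, then $\tau_n=0$, and the definition only requires these properties on $\{\tau_n>0\}$. A strict inequality may need a small adjustment, for instance using the level $n-1/m$ together with continuity.

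\textbf{Step 2: admissibility and the integrability bound \eqref{eq-local mild solution-1}.} Admissibility holds because $u^n$ is adapted with continuous $\rV$-valued paths. The bound \eqref{eq-local mild solution-1} follows from $|u^n|_{X_{t\wedge\tau_{n,m}}}\le n$. Alternatively, it follows directly from $u^n\in\mathbb{M}^2(X_{T_n})$.

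\textbf{Step 3: the mild identity \eqref{eq-local mild solution}.} This is the main point. For $r\le\tau_n$ we have $|u^n|_{X_r}\le n$, so $\theta_n(|u^n|_{X_r})=1$ by \autoref{def-theta}. We evaluate \eqref{NS-approximated} at the random time $t\wedge\tau_{n,m}$. In the deterministic convolution the integrand for $r\le t\wedge\tau_{n,m}$ carries $\theta_n=1$, so it equals $\int_0^{t\wedge\tau_{n,m}}S(t\wedge\tau_{n,m}-r)B(u^n(r))\,dr$.

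The stochastic convolution is the delicate part, because $\int_0^{s}S(s-r)\xi(r)\,dW(r)$ is not a martingale in $s$. Evaluating it at a stopping time therefore requires the standard identity
\[
\int_0^{t\wedge\sigma}S(t\wedge\sigma-r)\xi(r)\,dW(r)=\int_0^{t\wedge\sigma}\mathbf 1_{[0,\sigma)}(r)S(t\wedge\sigma-r)\xi(r)\,dW(r),
\]
where the right-hand side is understood via the process $\int_0^s S(s-r)\mathbf 1_{[0,\sigma)}(r)\xi(r)\,dW(r)$ evaluated at $s=t\wedge\sigma$. We prove this identity by approximating $\sigma$ with stopping times taking finitely many values; the identity is clear for such times, and we pass to the limit using continuity of the stochastic convolution in $\rV$, given by \autoref{prp-S}(ii).

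Inserting $\mathbf 1_{[0,\tau_{n,m})}$ lets us replace $\theta_n(|u^n|_{X_r})g(u^n(r))$ by $g(u^n(r\wedge\tau_{n,m}))$, since $\theta_n=1$ on that set. This yields exactly \eqref{eq-local mild solution}.

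An alternative route avoids the random-time issue in the semigroup formulation. We first convert \eqref{NS-approximated} to the strong form with truncated coefficients, as in \autoref{prop-equivalence}, where stopping is harmless. We then remove the truncation on $[0,\tau_{n,m}]$ and convert back using \autoref{prop-equivalence}. I would use this route if the direct argument becomes messy.
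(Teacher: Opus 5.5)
Your proposal is correct and follows essentially the paper's argument. You evaluate the fixed-point identity at a stopped time, use $\theta_n(\vert u^n\vert_{X_r})=1$ for $r\le\tau_n$, and apply the stopping identity for the stochastic convolution; the paper cites that identity from \cite[Lemma A.1]{Brzezniak+Maslowski+Seidler_2005} rather than proving it by finitely-valued approximations as you propose. Your explicit approximating sequence $\tau_{n,m}$ usefully fills in the accessibility requirement, which the paper leaves implicit by verifying the mild identity only at $\tau_n$ itself. One small correction: $\vert u^n\vert_{X_{t\wedge\tau_{n,m}}}\le n$ fails on $\{\vert u_0\vert_\rV>n\}$, so justify \eqref{eq-local mild solution-1} by your alternative, $u^n\in\mathbb{M}^2(X_{T_n})$, which is what the paper does.
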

\begin{proof}
Let us choose and fix $n\in\mathbb{N}$. In view of \autoref{cor-existence of u^n}, for $t \in[0, T_n]$, $u^n$ satisfies the approximated equation
\begin{equation}\label{NS-approximated1}
\begin{aligned}
u^n(t)= & S(t) u_0-\int_0^t S(t-r) \theta_n\left(\vert u^n\vert_{X_r}\right)B\left(u^n(r)\right)\, d r \\
& +\int_0^t S(t-r)\theta_n\left(\vert u^n\vert_{X_r}\right)g\left(u^n(r)\right) \,d W(r), \quad \mathbb{P} \text {-a.s. }.
\end{aligned}
\end{equation}

Assume $M\geq 0$. Let $u_0$ be $\mathcal{F}_0$-measurable $\rV$-valued random variable satisfying $\mathbb{E}\vert u_0\vert_\rV^2 \leq M^2$. By \autoref{prp-S} we infer
\begin{align}\label{eqn-Su_0}
\vert S u_0\vert_{\mathbb{M}^2\left(X_T\right)}\leq  C_9 [\mathbb{E}\vert u_0\vert_\rV^2]^{1/2}\leq C_9 M.
\end{align}
Let $\rv=0$ in \autoref{prop-Lipschitz of Psi},  we infer that
\begin{equation}\label{eqn-Psi u-Su0}
\begin{aligned}
\vert\Psi_{T, u_0}^n\left(u\right)-Su_0\vert_{\mathbb{M}^2\left(X_T\right)} &\leq C_n^\ast(T^{1/2}+T^{1/4})\vert u\vert_{\mathbb{M}^2\left(X_T\right)},\;\;T\geq 0.
\end{aligned}
\end{equation}
Using the triangle inequality, by \eqref{eqn-Su_0} and \eqref{eqn-Psi u-Su0} we infer that for all $u \in \mathbb{M}^2\left(X_T\right)$ and $T\geq 0$,
\begin{equation}\label{eqn-psi u}
\begin{aligned}
\vert \Psi_{T, u_0}^n(u)\vert_{\mathbb{M}^2\left(X_T\right)} &\leq \vert S u_0\vert_{\mathbb{M}^2\left(X_T\right)}+\vert\Psi_{T, u_0}^n\left(u\right)-Su_0\vert_{\mathbb{M}^2\left(X_T\right)}\\
&\leq C_9 M+C_n^\ast(T^{1/2}+T^{1/4})\vert u\vert_{\mathbb{M}^2\left(X_T\right)}.
\end{aligned}
\end{equation}
In view of \autoref{cor-existence of u^n}, $C_n^\ast(T_n^{1/2}+T_n^{1/4})\leq \frac{1}{2}$ and $\Psi_{T, u_0}^n(u^n)=u^n$. Therefore, for every $T \in (0,T_n]$, from \eqref{eqn-psi u} we infer
\begin{align}
\vert u^n\vert_{\mathbb{M}^2\left(X_T\right)}=\vert\Psi_{T, u_0}^n\left(u^n\right)\vert_{\mathbb{M}^2\left(X_T\right)} \leq C_9 M+\frac{\vert u^n\vert_{\mathbb{M}^2\left(X_T\right)}}{2} ,
\end{align}
which implies
\begin{align}\label{eqn-u^n}
\vert u^n\vert_{\mathbb{M}^2\left(X_T\right)}\leq 2 C_9 M.
\end{align}
Since $t \wedge \tau_n \leq T_n$ for every $t\geq 0$, inequality \eqref{eqn-u^n} implies
\begin{equation}\label{eqn-u^n-t wedge tau_n}
\mathbb E
\left(
\sup_{s\in[0,t\wedge\tau_n]}|u^n(s)|_V^2
+
\int_0^{t\wedge\tau_n}|u^n(s)|_{D(A)}^2\,ds
\right)
=
|u^n|_{M^2(X_{t\wedge\tau_n})}^2
\leq 4C_9^2M^2,\quad t\geq 0.
\end{equation}

Note that processes involved in the both sides of (\ref{NS-approximated1}) are continuous, so this equality still holds when the deterministic time $t$ is replaced by $t \wedge \tau_n$,
\begin{align}\label{NS-approximated2}
u^n (t \wedge \tau_n)= & S(t\wedge \tau_n) u_0-\int_0^{t \wedge \tau_n} S (t \wedge \tau_n-r) \theta_n ( |u^n|_{X_r}) B (u^n(r))\, d r \\
& +\int_0^{t\wedge \tau_n} S(t\wedge \tau_n-r)\theta_n (\vert u^n\vert_{X_r})g (u^n(r)) \,d W(r), \quad \mathbb{P} \text {-a.s. },\;\;t\geq 0.\nonumber
\end{align}
Since $\vert u^n\vert_{X_{\tau_n}}\leq n $,
and the function $r \mapsto\vert u^n\vert_{X_r}$ is non-decreasing, we infer that for all $r \leq \tau_n$, $\vert u^n\vert_{X_r} \leq\vert u^n\vert_{X_{\tau_n}} \leq n$. By definition of $\theta_n$, we infer
\begin{align}\label{eq-theta}
\theta_n(\vert u^n\vert_{X_r})=1, \quad \text { for every } r \in\left[0, t \wedge \tau_n\right], t \geq 0.
\end{align}
Define
\begin{equation}
\begin{aligned}
I(t)&:=\int_0^{t} S(t-r)g\left(u^n(r)\right) \,d W(r),\\
I_{\tau_n}(t)&:=\int_0^{t} \mathbf{1}_{[0,\tau_n)}(r)S(t-r)g\left(u^n(r\wedge \tau_n)\right)\,d W(r).
\end{aligned}
\end{equation}
We claim for every $t \geq 0$,
\begin{equation}\label{eq-I}
\begin{aligned}
I(t\wedge \tau_n)=I_{\tau_n}(t\wedge \tau_n)\;\; \mathbb{P}\mbox{-a.s.}.
\end{aligned}
\end{equation}
Indeed, since $\tau_n$ is a stopping time, the indicator process $\mathbf{1}_{[0, \tau_n)}$ of the open interval $[0, \tau_n)$ is adapted and right-continuous, and therefore by \cite[Proposition 1.13]{Karatzas+Shreve_1991}, it is progressively measurable. And the stopped process $g\left(u^n(r\wedge \tau_n)\right)$, $r \geq 0$, is progressively measurable, see \cite[Proposition 2.18]{Karatzas+Shreve_1991}. Therefore, the process
\[
S(t-r)\left(\mathbf{1}_{[0, \tau_n)}(r) g\left(u^n(r\wedge \tau_n)\right)\right), \;\; r \in [ 0,t],\] is progressively measurable.
It was shown in \cite[Lemma A.1]{Brzezniak+Maslowski+Seidler_2005} that if we assume further that $I$ and $I_\tau$ have continuous paths almost surely, then (\ref{eq-I}) holds for every $t \geq 0$, $ \mathbb{P}$-a.s..
Then by (\ref{NS-approximated2}), (\ref{eq-theta}) and (\ref{eq-I}) we infer that for all $t \geq 0$
\begin{equation}\label{eq-u^n local mild solution}
\begin{aligned}
u^n (t \wedge \tau_n )= & S(t\wedge \tau_n) u_0-\int_0^{t \wedge \tau_n} S (t \wedge \tau_n-r )B (u^n(r) )\, d r \\
& +\int_0^{t\wedge \tau_n} \mathbf{1}_{[0,\tau_n)}(r)S(t\wedge \tau_n-r)g (u^n(r\wedge \tau_n) ) \,d W(r), \quad \mathbb{P} \text {-a.s. }.
\end{aligned}
\end{equation}
Therefore, $ (u^n, \tau_n )$ is a local mild solution to our main problem (\ref{eqn-NS03}).
In the following we aim to show that for every $\epsilon>0$ and $M>0$, there exists $T^\ast(\epsilon,M)>0$ and $n\in\mathbb{N}$ such that
\begin{align}\label{eqn-514}
\mathbb{P}(\tau_n\geq T^\ast)\geq 1-\epsilon.
\end{align}
Let us choose and fix $\varepsilon>0$ and $M>0$. Choose $n\in\mathbb N$ such that
\begin{equation}
n^2\geq \frac{4C_9^2M^2}{\varepsilon}.
\end{equation}
By the definition of $\tau_n$,
\begin{equation*}
\{\tau_n<T_n\}
\subset
\{|u^n|_{X_{T_n}}\geq n\}.
\end{equation*}
Therefore, by Chebyshev's inequality and inequality \eqref{eqn-u^n},
\begin{align*}
\mathbb P(\tau_n<T_n)
&\leq
\mathbb P\left(|u^n|_{X_{T_n}}\geq n\right) \leq
\frac{\mathbb E|u^n|_{X_{T_n}}^2}{n^2}
\leq
\frac{4C_9^2M^2}{n^2}
\leq \varepsilon.
\end{align*}
Taking $T^*(\varepsilon,M):=T_n$, we infer that \eqref{eqn-514} holds.
The proof is complete.
\end{proof}

\section{Local maximal solution} \label{subsec-local maximal}
This section is devoted to establishing the existence and uniqueness of the maximal local solution to problem \eqref{eqn-NS03}. In this section we will use Assumption \ref{ass-g-loc Lip}.

As a first step, we address the uniqueness of a local solution $(u,\tau)$.

\begin{theorem}\label{thm-local unique}
Local solution $(u,\tau)$ to problem \eqref{eqn-NS03} is unique. More precisely, let $(u,\tau)$ and $(\rv,\sigma)$ be two local mild solutions to problem \eqref{eqn-NS03} in the sense of \autoref{def-local solution}, with the same initial data $u_0$ such that $\mathbb{E}\vert u_0\vert_\rV^2 <\infty$. Then the following pathwise uniqueness property holds:
\[
u(t) = \rv(t),\;\; \mathbb{P}\text{-a.s.} \text{ on } \Omega_t(\tau \wedge \sigma).
\]
\end{theorem}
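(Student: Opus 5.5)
We compare $u$ and $\rv$ on a common random interval and show that their difference vanishes, using Itô's formula for the $\rV$-norm of the difference, localization by stopping times, and Gronwall's lemma.

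\textbf{Localization.} Let $(\tau_m)$ and $(\sigma_m)$ be approximating sequences for $\tau$ and $\sigma$ as in \autoref{def-local solution}. For $R>0$ we set
\[
\rho_{m,R}:=\tau_m\wedge\sigma_m\wedge\inf\Big\{t\ge0:\ \sup_{s\le t}\big(\vert u(s)\vert_\rV+\vert \rv(s)\vert_\rV\big)\ge R\Big\}.
\]
By admissibility (continuity of paths in $\rV$ before the lifetime), $\rho_{m,R}\uparrow\tau\wedge\sigma$ almost surely as first $R\to\infty$ and then $m\to\infty$. It therefore suffices to show that $w:=u-\rv$ vanishes on $[0,t\wedge\rho_{m,R}]$ almost surely, for every $t$, $m$ and $R$.

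\textbf{Itô's formula and the drift terms.} By \autoref{prop-equivalence}, both processes satisfy the strong formulation \eqref{eq-local strong solution} up to $\tau_m\wedge\sigma_m$. Subtracting gives
\[
dw=-\big[\rA w+B(u)-B(\rv)\big]\,dt+\big(g(u)-g(\rv)\big)\,dW, \qquad w(0)=0.
\]
We apply Itô's formula to $\vert w\vert_\rV^2=\vert w\vert_\rH^2+\vert \rA^{1/2}w\vert_\rH^2$. This is justified in the Gelfand triple $D(\rA)\subset\rV\subset\rH$, using Pardoux's framework as in \autoref{rem-S}, since \eqref{eq-local mild solution-1} gives $w\in L^2(0,t\wedge\rho;D(\rA))$ and the drift lies in $L^2(0,t\wedge\rho;\rH)$. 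The result is
\[
d\vert w\vert_\rV^2=\Big(-2\langle \rA w+B(u)-B(\rv),\,w+\rA w\rangle+\vert g(u)-g(\rv)\vert_\rV^2\Big)\,dt+dM_t,
\]
where $M$ is a local martingale. The dissipative part gives $-2\vert \rA^{1/2}w\vert_\rH^2-2\vert \rA w\vert_\rH^2$. For the nonlinear part we use \eqref{eqn-Fu-Fv-0}, the bound $\vert w\vert_{D(\rA)}^2\le \vert w\vert_\rH^2+\vert\rA w\vert_\rH^2$, and Young's inequality:
\[
2\vert B(u)-B(\rv)\vert_\rH\,\vert w+\rA w\vert_\rH\le \vert\rA w\vert_\rH^2+C\big(1+\vert u\vert_{D(\rA)}^2R^2+R^4\big)\vert w\vert_\rV^2 .
\]
Here the second summand of \eqref{eqn-Fu-Fv-0} is handled by Young's inequality with exponents $(4,4,2)$ in the form $\vert w\vert_{D(\rA)}^{1/2}\vert w\vert_\rV^{1/2}R\,\vert w\vert_{D(\rA)}\le \epsilon\vert w\vert_{D(\rA)}^2+C_\epsilon R^4\vert w\vert_\rV^2$. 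By \autoref{ass-g-loc Lip}, the Itô correction satisfies $\vert g(u)-g(\rv)\vert_\rV^2\le C_6(R)^2\vert w\vert_\rV^2$.

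\textbf{Gronwall step.} We expect the integrating factor to be the main obstacle, because $\int\vert u\vert_{D(\rA)}^2\,ds$ is only integrable and not bounded pathwise. We handle it with a stochastic integrating factor. Set
\[
\Lambda(t):=\int_0^{t}C\big(1+R^2\vert u(s)\vert_{D(\rA)}^2+R^4+C_6(R)^2\big)\,ds,
\]
which is finite almost surely on $[0,\rho_{m,R}]$. Applying the Itô product rule to $e^{-\Lambda(t)}\vert w(t)\vert_\rV^2$ yields
\[
d\big(e^{-\Lambda}\vert w\vert_\rV^2\big)\le e^{-\Lambda}\,dM .
\]
The stopped stochastic integral is a true martingale, since its integrand is bounded by a constant times $R^2\,\vert w\vert_\rV\,\vert g(u)-g(\rv)\vert_\rV\le C(R)$. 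Taking expectations therefore gives
\[
\mathbb{E}\big[e^{-\Lambda(t\wedge\rho)}\vert w(t\wedge\rho)\vert_\rV^2\big]\le 0 .
\]
Since $\Lambda(t\wedge\rho)<\infty$ almost surely, it follows that $w(t\wedge\rho)=0$ almost surely. Path continuity then gives $w\equiv0$ on $[0,t\wedge\rho_{m,R}]$. Letting $R\to\infty$ and $m\to\infty$ yields $u(t)=\rv(t)$ almost surely on $\Omega_t(\tau\wedge\sigma)$.

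As an alternative that avoids the exponential factor, one could add $\int_0^t\vert u\vert_{D(\rA)}^2\,ds\le R$ to the definition of the stopping time; the ordinary Gronwall lemma would then apply to $\mathbb{E}\vert w(t\wedge\rho)\vert_\rV^2$ directly.
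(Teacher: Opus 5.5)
Your proof is correct, but it takes a different route from the paper. The paper never leaves the mild formulation. It writes $w=-S\ast[B(u)-B(\rv)]+S\diamond\mathbf{1}_{[0,\varrho_{n,k})}[g(u)-g(\rv)]$ and estimates $w$ in $\mathbb{M}^2(X_{t\wedge\varrho_{n,k}})$ using the deterministic and stochastic maximal-regularity bounds of \autoref{prp-S}. It then absorbs an $\epsilon$-multiple of the full $\mathbb{M}^2(X)$-norm of $w$ into the left-hand side and applies the ordinary Gronwall lemma to $\mathbb{E}\vert w(t\wedge\varrho_{n,k})\vert_\rV^2$. For this it localizes with $\vert u\vert_{X_t},\vert \rv\vert_{X_t}\le k$, which bounds both $\sup\vert u\vert_\rV$ and $\int\vert u\vert_{D(\rA)}^2$; this is exactly your closing ``alternative''.

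You instead pass to the strong form via \autoref{prop-equivalence} and use the energy identity for $\vert w\vert_\rV^2$ in the triple $D(\rA)\subset\rV\subset\rH$. There the dissipation $-2\vert\rA w\vert_\rH^2$ absorbs the $D(\rA)$-terms pathwise, so only the $\rV$-norm needs to be localized. The unbounded coefficient $R^2\vert u\vert_{D(\rA)}^2$ is then handled by the weight $e^{-\Lambda}$. This is legitimate because $\Lambda$ is adapted, continuous and of bounded variation (so there is no cross-variation term), it is finite a.s.\ by \eqref{eq-local mild solution-1}, and the stopped martingale has a bounded integrand.

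Your route gives a cleaner pathwise argument. It does not need the stochastic-convolution estimate on a random interval, where the paper has to handle the indicator $\mathbf{1}_{[0,\varrho_{n,k})}$ and stopped integrands. The paper's route is more economical in tools: it reuses exactly the estimates from the fixed-point construction and avoids the It\^o formula in the shifted Gelfand triple.

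One small point: $u$ and $\rv$ are defined only on $[0,\tau)$ and $[0,\sigma)$, so the infimum in $\rho_{m,R}$ should be taken over $t<\tau\wedge\sigma$. This is harmless, since you already cap with $\tau_m\wedge\sigma_m$.
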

\begin{proof}
Let
\[\rho := \tau \wedge \sigma.\]
Since both $\tau$ and $\sigma$ are accessible, there exist approximating sequences $(\tau_n)_{n \in \mathbb{N}}$ and $(\sigma_n)_{n \in \mathbb{N}}$ for $\tau$ and $\sigma$ such that $\tau_n \nearrow \tau$ and $\sigma_n \nearrow \sigma$ almost surely. Define
\[\rho_n := \tau_n \wedge \sigma_n\]
so that $\rho_n \nearrow \rho$ and each $\rho_n$ is also an accessible stopping time.

Let us choose and fix $n \in \mathbb{N}$, by the definition of local mild solution \autoref{def-local solution} for $t\geq 0$
\begin{equation}\label{eq-local mild solution-u}
\begin{aligned}
& u\left(t \wedge \rho_n\right)=  S\left(t \wedge \rho_n\right) u_0-\int_0^{t \wedge \rho_n} S\left(t \wedge \rho_n-r\right) B(u(r)) \,d r \\
&+ \int_0^{t\wedge \rho_n} \mathbf{1}_{[0, \rho_n)}(r) S(t\wedge \rho_n-r) g\left(u\left(r \wedge \rho_n\right)\right)\, d W(r),  \quad
\mathbb{P} \text {-a.s. }
\end{aligned}
\end{equation}
and
\begin{equation}\label{eq-local mild solution-v}
\begin{aligned}
& \rv\left(t \wedge \rho_n\right)=  S\left(t \wedge \rho_n\right) u_0-\int_0^{t \wedge \rho_n} S\left(t \wedge \rho_n-r\right) B(\rv(r)) \,d r \\
&+ \int_0^{t\wedge \rho_n} \mathbf{1}_{[0, \rho_n)}(r) S(t\wedge \rho_n-r) g\left(\rv\left(r \wedge \rho_n\right)\right)\, d W(r),  \quad
\mathbb{P} \text {-a.s. }
\end{aligned}
\end{equation}
Let us choose and fix $k \in \mathbb{N}$. Define $\varrho_{n, k}=\tau_n \wedge \sigma_n \wedge \tilde{\tau}_k \wedge \tilde{\sigma}_k$, where
\[
\tilde{\tau}_k=\inf \left\{t \in[0, \infty):|u|_{X_t} \geq k\right\} \wedge \tau,
\]
and
\[
\tilde{\sigma}_k=\inf \left\{t \in[0, \infty):|\rv|_{X_t} \geq k\right\} \wedge \sigma.
\]
We observe that
\[\varrho_{n, k} \nearrow \sigma_n \wedge \boldsymbol{\tau}_n=\rho_n,\; \mathbb{P}-a.s., \;\;k \rightarrow \infty .\]
Define the difference
\[w(t) := u(t) - \rv(t),\;\;t\geq 0.\]
Since for every $t\geq 0$, $t\wedge \varrho_{n, k}\wedge \rho_n=t\wedge \varrho_{n, k}$. In view of \eqref{eq-local mild solution-u} and \eqref{eq-local mild solution-v}, $w$ satisfies:
\begin{align}\label{eq-local mild solution-w}
&w(t\wedge \varrho_{n, k}) =- \int_0^{t \wedge \varrho_{n, k}} S\left(t \wedge \varrho_{n, k}-r\right) [B(u(r)) - B(\rv(r))]\,dr \\
&+ \int_0^{t \wedge \varrho_{n, k}} \mathbf{1}_{[0, \varrho_{n, k})}(r) S(t\wedge \varrho_{n, k}-r) [g\left(u\left(r \wedge \varrho_{n, k}\right)\right) - g\left(\rv\left(r \wedge \varrho_{n, k}\right)\right)]\,dW(r),\;\;\mathbb{P} \text {-a.s. },\nonumber
\end{align}
which implies for $t\in[0,\varrho_{n, k})$
\[
w = -S \ast [B(u)-B(\rv)] + S \diamond \mathbf{1}_{[0, t)}[g(u) - g(\rv)].
\]
Therefore, by the triangle inequality, for every $t> 0$
\begin{align}\label{eqn-w}
\vert w\vert_{\mathbb{M}^2\left(X_{t\wedge \varrho_{n, k}}\right)}^2
&\leq \mathbb{E}\vert S \ast [B(u)-B(\rv)]\vert_{X_{t\wedge \varrho_{n, k}}}^2 +\vert S \diamond \mathbf{1}_{[0, t\wedge\varrho_{n, k})}[g(u) - g(\rv)]\vert_{\mathbb{M}^2\left(X_{t\wedge \varrho_{n, k}}\right)}^2.
\end{align}
Using \autoref{prp-S} and \autoref{lem-inequality}, we infer
\begin{align}\label{eqn-w-10}
&\mathbb{E}\vert S \ast [B(u)-B(\rv)]\vert_{X_{t\wedge \varrho_{n, k}}}^2\\&\leq C_7 \mathbb{E}\vert B(u)-B(\rv)\vert_{L^2(0,t\wedge\varrho_{n, k};\rH)}^2=C_7 \mathbb{E}\int_0^{t\wedge\varrho_{n, k}}\vert B(u(r))-B(\rv(r))\vert_\rH^2\,dr\nonumber\\
&\leq CC_7 \left[\mathbb{E}\int_0^{t\wedge \varrho_{n, k}}  \vert u(r) \vert _{D(\rA)}\vert u(r) \vert _\rV\vert w(r) \vert_\rV^2\,dr+  \mathbb{E}\int_0^{t\wedge \varrho_{n, k}}\vert w(r) \vert _{D(\rA)}\vert w(r) \vert_\rV\vert \rv(r) \vert_\rV^2\,dr\right].\nonumber
\end{align}
By the H\"older inequality and the Young inequality, we infer that for every $\epsilon>0$ there exists $C_\epsilon:=C(\epsilon)>0$ such that
\begin{equation}\label{eqn-w-11}
\begin{aligned}
&\mathbb{E}\int_0^{t\wedge \varrho_{n, k}}  \vert u(r) \vert _{D(\rA)}\vert u(r) \vert _\rV\vert w(r) \vert_\rV^2\,dr\\
&\leq  \mathbb{E}\left[ \sup_{r\in[0,t\wedge \varrho_{n, k}]} \vert u(r) \vert _\rV \left(\int_0^{t\wedge \varrho_{n, k}}  \vert u(r) \vert _{D(\rA)}^2\,dr\right)^{1/2}\left(\int_0^{t\wedge \varrho_{n, k}}  \vert w(r) \vert_\rV^4\,dr\right)^{1/2}\right]\\
&\leq \mathbb{E}\left[ \vert u\vert_{X_{t\wedge\varrho_{n, k}}}^2\left(\sup_{r\in[0,t\wedge \varrho_{n, k}]}\vert w(r) \vert_\rV\right) \left(\int_0^{t\wedge \varrho_{n, k}}  \vert w(r) \vert_\rV^2\,dr\right)^{1/2}\right]\\
&\leq \mathbb{E}\left[ \vert u\vert_{X_{t\wedge\varrho_{n, k}}}^2 \left(\epsilon\sup_{r\in[0,t\wedge \varrho_{n, k}]}\vert w(r) \vert_\rV^2+C_\epsilon\int_0^{t\wedge \varrho_{n, k}}  \vert w(r) \vert_\rV^2\,dr\right)\right].
\end{aligned}
\end{equation}

Similarly, by the H\"older inequality and the Young inequality, we infer that there exists $C_\epsilon^\prime:=C(\epsilon)>0$ such that
\begin{equation}\label{eqn-w-12}
\begin{aligned}
&  \mathbb{E}\int_0^{t\wedge \varrho_{n, k}}\vert w(r) \vert _{D(\rA)}\vert w(r) \vert_\rV\vert \rv(r) \vert_\rV^2\,dr\\
&\leq  \mathbb{E}\left[ \sup_{r\in[0,t\wedge \varrho_{n, k}]} \vert \rv(r) \vert _\rV ^2\left(\int_0^{t\wedge \varrho_{n, k}}  \vert w(r) \vert _{D(\rA)}^2\,dr\right)^{1/2}\left(\int_0^{t\wedge \varrho_{n, k}}  \vert w(r) \vert_\rV^2\,dr\right)^{1/2}\right]\\
&\leq  \mathbb{E}\left[ \vert \rv\vert_{X_{t\wedge\varrho_{n, k}}}^2[\epsilon\int_0^{t\wedge \varrho_{n, k}}  \vert w(r) \vert _{D(\rA)}^2\,dr+C_\epsilon^\prime\int_0^{t\wedge \varrho_{n, k}}  \vert w(r) \vert_\rV^2\,dr]\right].
\end{aligned}
\end{equation}
Since $\varrho_{n, k}\leq \tilde{\tau}_k \wedge \tilde{\sigma}_k$, for every $t>0$,
\[
\vert u\vert_{X_{t\wedge\varrho_{n, k}}}\leq k,\;\;\vert \rv\vert_{X_{t\wedge\varrho_{n, k}}}\leq k.
\]
Hence, by inequalities \eqref{eqn-w-10}, \eqref{eqn-w-11} and \eqref{eqn-w-12}, we infer
\begin{equation}\label{eqn-w-1}
\begin{aligned}
&\mathbb{E}\vert S \ast [B(u)-B(\rv)]\vert_{X_{t\wedge \varrho_{n, k}}}^2\\&\leq CC_7 k^2\mathbb{E}\left[ \epsilon\sup_{r\in[0,t\wedge \varrho_{n, k}]}\vert w(r) \vert_\rV^2+C_\epsilon\int_0^{t\wedge \varrho_{n, k}}  \vert w(r) \vert_\rV^2\,dr\right]\\
&+ CC_7 k^2\mathbb{E}\left[ \epsilon\int_0^{t\wedge \varrho_{n, k}}  \vert w(r) \vert _{D(\rA)}^2\,dr+C_\epsilon^\prime\int_0^{t\wedge \varrho_{n, k}}  \vert w(r) \vert_\rV^2\,dr\right]\\
&\leq CC_7 k^2\left[ \epsilon\vert w(r) \vert_{\mathbb{M}^2\left(X_t\wedge \varrho_{n, k}\right)}^2+(C_\epsilon+C_\epsilon^\prime)\mathbb{E}\int_0^{t\wedge \varrho_{n, k}}  \vert w(r) \vert_\rV^2\,dr\right].
\end{aligned}
\end{equation}

Using \autoref{prp-S}  and \autoref{ass-g-loc Lip}, we infer there exists $C_k=C(k)>0$ such that
\begin{equation}\label{eqn-w-2}
\begin{aligned}
\vert S \diamond \mathbf{1}_{[0, \varrho_{n, k})}[g(u) - g(\rv)]\vert_{\mathbb{M}^2\left(X_{t\wedge \varrho_{n, k}}\right)}^2&\leq C_8 \vert g(u) - g(\rv)\vert_{\mathbb{M}^2(0, t\wedge \varrho_{n, k} ; \rV)}^2\\
&=C_8\mathbb{E}\left(\int_0^{t\wedge \varrho_{n, k}}\vert g(u(r)) - g(\rv(r))\vert_\rV^2\,dr\right)\\
&\leq C_kC_8\mathbb{E}\left(\int_0^{t\wedge \varrho_{n, k}}\vert w(r)\vert_\rV^2\,dr\right).
\end{aligned}
\end{equation}
Therefore, in view of \eqref{eqn-w}, \eqref{eqn-w-1} and \eqref{eqn-w-2}, we infer there exists $C_{k,\epsilon}=C(k,\epsilon)>0$ such that
\begin{equation}\label{eqn-w-6}
\begin{aligned}
\vert w\vert_{\mathbb{M}^2\left(X_t\wedge \varrho_{n, k}\right)}^2
&\leq  C_{k,\epsilon}\int_0^{t\wedge \varrho_{n, k}} \mathbb{E} \vert w(t)\vert_\rV^2\,dt+ CC_7 k^2\epsilon \vert w\vert_{\mathbb{M}^2\left(X_t\wedge \varrho_{n, k}\right)}^2.
\end{aligned}
\end{equation}
Let us choose $\epsilon\in(0,\frac{1}{CC_7 k^2})$, which implies
\[
CC_7 k^2\epsilon\in(0,1).
\]
Therefore, \eqref{eqn-w-6} implies there exists $C_{k,\epsilon}^\prime=C(k,\epsilon)>0$ such that
\[
\mathbb{E} \vert w(t\wedge \varrho_{n, k})\vert_\rV^2\leq\vert w\vert_{\mathbb{M}^2\left(X_{t\wedge \varrho_{n, k}}\right)}^2\leq C_{k,\epsilon}^\prime\int_0^{t\wedge \varrho_{n, k}} \mathbb{E} \vert w(t)\vert_\rV^2\,dt.
\]
Applying Gronwall's lemma \cite{Gronwall_1919} we get
\[
\mathbb{E}  \vert w(t\wedge \varrho_{n, k})\vert_\rV^2 = 0,\;\;t>0.
\]
Letting $k \to \infty$ and then $n \to \infty$, we infer
\[
\mathbb{E}  \vert w(t\wedge \rho)\vert_\rV^2 = 0,\;\;t>0.
\]
which implies
\[
u(t) = \rv(t),\;\; \mathbb{P}\text{-a.s.} \text{ on } \Omega_t(\tau \wedge \sigma).
\]
We complete the proof of \autoref{thm-local unique}.
\end{proof}

Let us state the following important lemma (see \cite[Lemma 5.6]{Brz+Haus+Raza_2021}) and remark
(see \cite[Remark 5.7]{Brz+Haus+Raza_2021}).
\begin{lemma}\label{lemma-Amalgamation}
(The Amalgamation Lemma).
\begin{trivlist}
\item[(i)] Let  $\Lambda$ be a family of accessible stopping times taking values in $[0, \infty]$, such that the supremum of every finite subset of $\Lambda$ belongs to $\Lambda$. Then, a supremum of $\Lambda$, i.e. $\tau:=\sup \Lambda$, is an accessible stopping time with values in $[0, \infty]$ and there exists an increasing sequence $\left\{\alpha_n\right\}_{n=1}^{\infty}\subset \Lambda$ such that $\tau(\omega)=\lim _{n \rightarrow \infty} \alpha_n(\omega)$, for all $\omega \in \Omega$.
\item[(ii)] Assume also that for each $\alpha \in \Lambda$, $I_\alpha:[0, \alpha) \times \Omega \rightarrow \rV$ is an admissible process such that for all $\alpha, \beta \in \Lambda$ and every $t>0$,
\begin{align}\label{eq-3.1}
I_\alpha(t)=I_\beta(t) \quad \mathbb{P} \text {-a.s. on } \Omega_t(\alpha \wedge \beta).
\end{align}
Then, there exists an admissible process $\mathbf{I}:[0, \tau) \times \Omega \rightarrow \rV$, such that every $t>0$,
\begin{align}\label{eq-3.2}
\mathbf{I}(t)=I_\alpha(t) \quad \mathbb{P} \text {-a.s. on } \Omega_t(\alpha).
\end{align}
\item[(iii)]
Moreover, if $\tilde{\mathbf{I}}:[0, \tau) \times \Omega \rightarrow X$ is any process satisfying (\ref{eq-3.2}) then the process $\tilde{\mathbf{I}}$ is a version of the process $\mathbf{I}$, i.e. for all $t \in[0, \infty)$
\begin{align}
\mathbb{P}(\{\omega \in \Omega: t<\tau(\omega), \mathbf{I}(t, \omega) \neq \tilde{\mathbf{I}}(t, \omega)\})=0 .
\end{align}
In particular, if in addition $\tilde{\mathbf{I}}$ is an admissible process, then
\begin{align}
\mathbf{I}=\tilde{\mathbf{I}}.
\end{align}
\end{trivlist}
\end{lemma}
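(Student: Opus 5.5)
The Amalgamation Lemma is quoted from \cite[Lemma 5.6]{Brz+Haus+Raza_2021}, but I would prove it directly as follows.

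\emph{Part (i).} Since $\Omega$ is a probability space, the supremum of an arbitrary family of random variables need not be measurable. I would therefore use the essential supremum. Replace each $\alpha\in\Lambda$ by $\arctan\alpha$ to work with bounded variables. Set $s:=\sup\{\mathbb{E}\arctan\alpha:\alpha\in\Lambda\}$ and pick $\beta_n\in\Lambda$ with $\mathbb{E}\arctan\beta_n\to s$. Put $\alpha_n:=\beta_1\vee\dots\vee\beta_n$; by the assumption on finite suprema, $\alpha_n\in\Lambda$. The sequence $\alpha_n$ is increasing, and I define $\tau:=\lim_n\alpha_n$. Monotone convergence shows that $\alpha\le\tau$ a.s. for every $\alpha\in\Lambda$: otherwise $\alpha\vee\alpha_n\in\Lambda$ would have expectation (after $\arctan$) strictly exceeding $s$ for large $n$. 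So $\tau$ is the (essential) supremum, and it is a stopping time because it is an increasing limit of stopping times and the filtration is right continuous.

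For accessibility, take for each $n$ an approximating sequence $(\alpha_{n,m})_m$ of $\alpha_n$. Using Egorov-type and diagonal choices, pick $m_n$ with $\mathbb{P}(\alpha_{n,m_n}<\alpha_n-2^{-n}\wedge\cdots)\le2^{-n}$. Then set $\tau_n:=\max_{k\le n}\alpha_{k,m_k}$. This sequence is increasing, satisfies $\tau_n<\tau$ on $\{\tau>0\}$, and converges to $\tau$ a.s. by Borel--Cantelli. One subtlety is that the statement asks for convergence for all $\omega$. I would redefine everything on a null set, which is allowed since the filtration is complete.

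\emph{Part (ii).} On $\{t<\tau\}$ there is, for a.e.\ $\omega$, an index $n$ with $t<\alpha_n(\omega)$. I would define $\mathbf I(t,\omega):=I_{\alpha_n}(t,\omega)$ for the first such $n$. The consistency hypothesis \eqref{eq-3.1}, applied first for rational $t$ and then extended by path continuity of the admissible processes, shows that for a.e.\ $\omega$ the paths $I_{\alpha_n}(\cdot,\omega)$ and $I_{\alpha_k}(\cdot,\omega)$ agree on $[0,\alpha_n\wedge\alpha_k)$. Removing a single null set (countably many indices) makes $\mathbf I$ well defined with continuous paths on $[0,\tau(\omega))$. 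Adaptedness follows because $\{t<\alpha_n\}\in\mathscr F_t$ and each $I_{\alpha_n}(t)$ is $\mathscr F_t$-measurable there.

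To obtain \eqref{eq-3.2} for an arbitrary $\alpha\in\Lambda$, I use that $\alpha\le\tau$ a.s. Hence on $\{t<\alpha\}$ there is some $n$ with $t<\alpha_n\wedge\alpha$ (up to a null set, since $\alpha\vee\alpha_n\in\Lambda$ and the $\alpha_n$ approximate $\tau$). On that set \eqref{eq-3.1} gives $I_\alpha(t)=I_{\alpha_n}(t)=\mathbf I(t)$.

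\emph{Part (iii).} If $\tilde{\mathbf I}$ also satisfies \eqref{eq-3.2}, then on $\{t<\tau\}=\bigcup_n\{t<\alpha_n\}$ both processes equal $I_{\alpha_n}(t)$ a.s., which proves the version statement. If $\tilde{\mathbf I}$ is moreover admissible, the two processes agree a.s.\ at all rational times, and continuity then gives indistinguishability.

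The main obstacle I expect is the accessibility in part (i), namely producing announcing times that are strictly below $\tau$ everywhere on $\{\tau>0\}$ from announcing sequences of the $\alpha_n$. I would try the diagonal maximum construction described above, and if strictness fails on a null set, I would handle that set by the completeness modification.
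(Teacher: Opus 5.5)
The paper gives no proof of this lemma; it quotes it from \cite[Lemma 5.6]{Brz+Haus+Raza_2021}. Your argument is the standard one and it is correct: realise $\sup\Lambda$ as an essential supremum via a maximising sequence for $\mathbb{E}\arctan\alpha$ closed under finite maxima, build an announcing sequence by a diagonal/Borel--Cantelli choice, and glue the $I_{\alpha_n}$ along the countable sequence. A few details are worth tightening when you write it out.

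\emph{The choice of $m_n$.} Your placeholder should read $\mathbb{P}\bigl(\alpha_{n,m_n}<(\alpha_n-2^{-n})\wedge n\bigr)\le 2^{-n}$. The truncation at $n$ is what handles $\{\alpha_n=\infty\}$. Such an $m_n$ exists by continuity from above, since these events decrease in $m$ to a null set, so no Egorov-type argument is needed. Borel--Cantelli then gives $\tau_n\ge(\alpha_n-2^{-n})\wedge n$ eventually, hence $\tau_n\to\tau$ a.s.

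\emph{Strictness on $\{\tau>0\}$.} The problem on $\{\alpha_k=0<\tau\}$ disappears if you use $\alpha_{k,m}\wedge\alpha_k$ instead of $\alpha_{k,m}$. Each term is then $<\alpha_k\le\tau$ on $\{\alpha_k>0\}$ and equal to $0<\tau$ on $\{\alpha_k=0<\tau\}$. So $\tau_n<\tau$ holds surely on $\{\tau>0\}$ and no null-set surgery is required.

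\emph{The clause ``for all $\omega$''.} Since you define $\tau:=\lim_n\alpha_n$, this clause is automatic, and the announcing sequence only needs a.s.\ convergence, so no redefinition is needed there either. The essential-supremum reading is indeed the right one: the literal pointwise supremum of an uncountable $\Lambda$ need not be measurable or attained along any sequence.

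\emph{$\tau$ is a stopping time.} The identity $\{\tau\le t\}=\bigcap_n\{\alpha_n\le t\}$ shows this directly, without right continuity of the filtration.
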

\begin{remark}\label{remark-I}
Let us note that because both processes $\mathbf{I}:[0, \tau) \times \Omega \rightarrow \rV$ and $I_\alpha$ : $[0, \alpha) \times \Omega \rightarrow \rV$ are admissible (and hence with almost sure continuous trajectories),
\begin{trivlist}
\item[(i)] since $\alpha \leq \tau$, condition (\ref{eq-3.2}) is equivalent to the following one:
\begin{align}
\mathbf{I}_{\mid[0, \alpha) \times \Omega}=I_\alpha.
\end{align}
\item[(ii)]
Similarly, condition (\ref{eq-3.1}) is equivalent to the following one:
\begin{align}
I_{\alpha \vert[0, \alpha \wedge \beta) \times \Omega}=I_{\beta\vert[0, \alpha \wedge \beta) \times \Omega}  .
\end{align}
\end{trivlist}
\end{remark}

Next, we introduce a partial order on the set of all local mild solutions to \eqref{eqn-NS03}, which will allow us to define and construct a maximal local solution.
\begin{definition}\label{def-partial ordered set}
Consider a family $\Xi$ of all local mild solutions $(u, \tau)$ to the problem (\ref{eqn-NS03}). For two elements $(u, \tau),(\rv, \sigma) \in \Xi$, let us define an order $\preceq$ as $(u, \tau) \preceq(\rv, \sigma)$ if and only if $\tau \leq \sigma ,\, \mathbb{P}$-a.s. and $\rv\vert_{[0, \tau) \times \Omega} \sim u$. Note that if $(u, \tau) \preceq(\rv, \sigma)$ and $(\rv, \sigma) \preceq(u, \tau)$, then $(u, \tau) \sim(\rv, \sigma)$. We will say $(u, \tau) \prec(\rv, \sigma)$ if and only if $(u, \tau) \preceq(\rv, \sigma)$ and $(u, \tau) \nsim(\rv, \sigma)$.
\end{definition}
\begin{remark}
It is not difficult to see that the order $\preceq$ on $\Xi$ is reflexive, anti-symmetric and transitive. Therefore, $(\Xi, \preceq)$ is a partial ordered set. Moreover, for any non-empty chain $\left(u_1, \tau_1\right) \preceq\left(u_2, \tau_2\right) \preceq\left(u_3, \tau_3\right) \preceq \cdots$, of elements from $(\Xi, \preceq)$, there exists an upper bound $(u, \tau)$ for the chain. Hence, there exists a maximal element $(u, \tau)$ in $(\Xi, \preceq)$. The existence of an upper bound of every non-empty chain of $(\Xi, \preceq)$ is justified by a version of slightly generalized Amalgamation Lemma, see \autoref{lemma-Amalgamation}. Each maximal element $(u, \tau)$ in the set $(\Xi, \preceq)$ is a maximal local solution to the problem \eqref{eqn-NS03}.
\end{remark}

We can now state the existence and uniqueness of a maximal local solution to our main problem \eqref{eqn-NS03}.
\begin{prop}\label{prop-local solution maximal}
Assume that $\mathbb{E}\vert u_0\vert_\rV^2 <\infty$. Problem \eqref{eqn-NS03} has a unique maximal local solution $(\hat{u}, \hat{\tau})$, i.e.
for all other local solution $(\rv, \sigma)$, $\sigma \leq \hat\tau$ and $\hat{u}\vert_{[0, \sigma)}=\rv$.
\end{prop}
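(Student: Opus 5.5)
The plan is to build $(\hat u,\hat\tau)$ by amalgamating all local mild solutions with the Amalgamation Lemma (\autoref{lemma-Amalgamation}), and to get uniqueness from \autoref{thm-local unique}.

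First, the family $\Xi$ of local mild solutions is non-empty: \autoref{prop-local solution} shows that $(u^n,\tau_n)$ belongs to $\Xi$. Let $\Lambda$ be the set of all stopping times $\tau$ such that $(u,\tau)\in\Xi$ for some process $u$.

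The main step is to check that $\Lambda$ is closed under finite suprema. Take $(u,\tau),(\rv,\sigma)\in\Xi$.
\begin{itemize}
\item By \autoref{thm-local unique}, $u=\rv$ on $[0,\tau\wedge\sigma)$. So the process $w$ equal to $u$ on $[0,\tau)$ and to $\rv$ on $[0,\sigma)$ is well defined up to indistinguishability on $[0,\tau\vee\sigma)$.
\item Adaptedness of $w$ is clear. Continuity holds because the two pieces agree on the overlap.
\item The stopping time $\tau\vee\sigma$ is accessible, with approximating sequence $\tau_m\vee\sigma_m$.
\end{itemize}
The delicate point is the mild formulation \eqref{eq-local mild solution} up to $\tau_m\vee\sigma_m$, together with the integrability \eqref{eq-local mild solution-1}. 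I would check it on the events $\{\tau_m\ge\sigma_m\}$ and $\{\tau_m<\sigma_m\}$. Both events are $\mathscr F_{\tau_m\wedge\sigma_m}$-measurable, so multiplying the stochastic integral identities by their indicators is legitimate. Equivalently, one can use the strong form from \prop-equivalence, where localisation is transparent: on each event the identity for $w$ reduces to the identity for $u$ or for $\rv$. Integrability follows from the bound
\[
\vert w\vert_{X_{t\wedge(\tau_m\vee\sigma_m)}}^2\le \vert u\vert_{X_{t\wedge\tau_m}}^2+\vert \rv\vert_{X_{t\wedge\sigma_m}}^2 .
\]
To keep $\mathbb{P}$-a.s. statements uniform in $t$, I would use that all processes are continuous. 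This verification is the main obstacle, mostly a matter of bookkeeping.

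Next I apply \autoref{lemma-Amalgamation}(i) to $\Lambda$. This gives that $\hat\tau:=\sup\Lambda$ is accessible and is the limit of an increasing sequence $\alpha_n\in\Lambda$. Part (ii), with consistency supplied by \autoref{thm-local unique}, produces an admissible process $\hat u$ on $[0,\hat\tau)$ that agrees with each solution on its own interval. To show $(\hat u,\hat\tau)\in\Xi$, I would take approximating sequences $\alpha_{n,m}$ of the $\alpha_n$ and diagonalise them into an approximating sequence $\beta_n:=\max_{k\le n}\alpha_{k,n}$ for $\hat\tau$. Each $\beta_n$ is a finite maximum of stopping times at which the mild identity holds, by the gluing step above. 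The identity and the integrability bound then transfer to $\hat u$ up to $t\wedge\beta_n$.

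Maximality is then immediate: for any local solution $(\rv,\sigma)$ we have $\sigma\in\Lambda$, hence $\sigma\le\hat\tau$, and $\hat u|_{[0,\sigma)}=\rv$ by part (ii) of \autoref{lemma-Amalgamation} or directly by \autoref{thm-local unique}. Uniqueness also follows: if $(\tilde u,\tilde\tau)$ is another maximal solution, then $\tilde\tau\le\hat\tau\le\tilde\tau$ and the processes coincide. This means $(\tilde u,\tilde\tau)\sim(\hat u,\hat\tau)$, as also stated in part (iii) of \autoref{lemma-Amalgamation}.
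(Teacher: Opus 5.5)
Your proof is correct and follows essentially the paper's route. It uses the same three ingredients: the Amalgamation Lemma, local uniqueness from \autoref{thm-local unique}, and the gluing of two solutions on the events $\{\tau_m\ge\sigma_m\}$ and $\{\tau_m<\sigma_m\}$, which the paper carries out in Step~3 of its proof. The differences are organisational: you glue first to get closure of $\Lambda$ under finite suprema (used implicitly in the paper's Step~1), and you check the mild formulation for $(\hat u,\hat\tau)$ along the diagonal sequence $\beta_n=\max_{k\le n}\alpha_{k,n}$ rather than through the auxiliary process $\hat\eta$ and part (iii) of \autoref{lemma-Amalgamation}, which has the small advantage that \eqref{eq-local mild solution-1} is already available at each $\beta_n$. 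Uniqueness then follows at once from $\hat\tau=\sup\Lambda$, instead of by the paper's contradiction argument.
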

\begin{proof}
\textbf{Step 1}: Let us choose and fix a local solution $\left(u^*, \tau^\ast\right)$ to problem (\ref{eqn-NS03}) and let us consider the family $\Xi_1$ of all local solutions $(u, \tau)$ to the problem (\ref{eqn-NS03}) such that $\left(u^*, \tau^\ast\right) \preceq(u, \tau)$. By \autoref{prop-local solution} this set is non-empty. It follows from \autoref{prop-local solution}, \autoref{thm-local unique} and the Amalgamation Lemma, see \autoref{lemma-Amalgamation}, there exists an accessible stopping time
\begin{align}\label{eq-hat tau}
\hat{\tau}:=\sup \{\tau:(u, \tau) \in \Xi_1\},
\end{align}
and an admissible process $\hat{u}:[0, \hat{\tau}) \times \Omega \rightarrow \rV$, such that for all $(u, \tau) \in \Xi_1$ and $t>0$
\begin{align}\label{eq-hat u}
\hat{u}(t)=u(t) \quad \mathbb{P} \text {-a.s. on } \Omega_t(\tau),
\end{align}
where $\Omega_t(\tau):=\{\omega \in \Omega: t<\tau(\omega)\}$. Moreover, there exists an increasing sequence $\left(\hat{\tau}_n\right)$ of accessible stopping times such that $\hat{\tau}(\omega)=\lim _{n \rightarrow \infty} \hat{\tau}_n(\omega)$, for all $\omega \in \Omega$.

\textbf{Step 2}: In order to complete the proof of the existence of a maximal local solution, we shall prove that $(\hat{u}, \hat{\tau}) \in \Xi_1$.

Let us define an auxiliary process $\hat{\eta}=(\hat{\eta}(t)), t \in[0, \hat{\tau})$, such that for each $n \in \mathbb{N}$ and $t \geq 0$, the following equality holds:
\begin{equation}\label{def-hat eta}
\begin{aligned}
\hat{\eta}\left(t \wedge \hat{\tau}_n\right)= & S(t\wedge \hat{\tau}_n) u_0-\int_0^{t \wedge \hat{\tau}_n} S\left(t \wedge \hat{\tau}_n-r\right)B\left(\hat{u}(r)\right)\, d r \\
& +\int_0^{t\wedge \hat{\tau}_n} \mathbf{1}_{[0,\hat{\tau}_n)}(r)S(t\wedge \hat{\tau}_n-r)g\left(\hat{u}(r\wedge \hat{\tau}_n)\right) \,d W(r), \quad \mathbb{P} \text {-a.s. }
\end{aligned}
\end{equation}
Define a process $\eta=(\eta(t)), t \in[0, \tau)$ by the above formulae (\ref{def-hat eta}) with $\hat{u}$ replaced by $u$ and the approximating sequence $\left({\hat\tau}_n\right)_{n\in \mathbb{N}}$ of the accessible stopping time $\hat{\tau}$ replaced by approximating sequence $\left(\tau_n\right)_{n\in \mathbb{N}}$, of the accessible stopping time $\tau$, i.e.
\begin{equation}\label{def-eta}
\begin{aligned}
\eta\left(t \wedge {\tau}_n\right)= & S(t\wedge {\tau}_n) u_0-\int_0^{t \wedge {\tau}_n} S\left(t \wedge {\tau}_n-r\right)B\left({u}(r)\right)\, d r \\
& +\int_0^{t\wedge {\tau}_n} \mathbf{1}_{[0,{\tau}_n)}(r)S(t\wedge {\tau}_n-r)g\left({u}(r\wedge {\tau}_n)\right) \,d W(r), \quad \mathbb{P} \text {-a.s. }
\end{aligned}
\end{equation}
Since the pair $(u, \tau)$ is a local solution satisfying \eqref{eq-local mild solution}, it follows that the process $\eta(t), t \in[0, \tau)$ is a version of the process $u(t), t \in[0, \tau)$. Therefore, we infer
\begin{align}\label{eqn-?}
&u\left(t \wedge \hat{\tau}_n\wedge\tau_n\right)\\
&=  S(t\wedge \hat{\tau}_n\wedge\tau_n) u_0-\int_0^{t \wedge \hat{\tau}_n\wedge\tau_n} S\left(t \wedge \hat{\tau}_n\wedge\tau_n-r\right)B\left(u(r)\right)\, d r \nonumber\\
& +\int_0^{t\wedge \hat{\tau}_n\wedge\tau_n} \mathbf{1}_{[0,\hat{\tau}_n\wedge\tau_n)}(r)S(t\wedge \hat{\tau}_n\wedge\tau_n-r)g\left(u(r\wedge \hat{\tau}_n\wedge\tau_n)\right) \,d W(r),\;\;\mathbb{P} \text {-a.s. }.\nonumber
\end{align}
Since $\hat{u}$ satisfies \eqref{eq-hat u}, in view of \cite[Proposition
2.10]{brzezniak+Elworthy_2000}, we infer that
\begin{equation}\label{eq-brzezniak+Elworthy_2000}
\begin{aligned}
&\int_0^{t\wedge \hat{\tau}_n\wedge\tau_n} \mathbf{1}_{[0,\hat{\tau}_n\wedge\tau_n)}(r)S(t\wedge \hat{\tau}_n\wedge\tau_n-r)g\left(u(r\wedge \hat{\tau}_n\wedge\tau_n)\right) \,d W(r)\\
&=\int_0^{t\wedge \hat{\tau}_n\wedge\tau_n} \mathbf{1}_{[0,\hat{\tau}_n\wedge\tau_n)}(r)S(t\wedge \hat{\tau}_n\wedge\tau_n-r)g\left(\hat{u}(r\wedge \hat{\tau}_n\wedge\tau_n)\right) \,d W(r).
\end{aligned}
\end{equation}
Hence, by \eqref{eq-hat u}, \eqref{def-hat eta} and \eqref{eq-brzezniak+Elworthy_2000}, we infer that
\begin{equation}\label{eq-u-hat eta}
\begin{aligned}
u\left(t \wedge \hat{\tau}_n\wedge\tau_n\right)
=\hat{\eta}\left(t \wedge \hat{\tau}_n\wedge\tau_n\right),\;\; \mathbb{P} \text {-a.s. }.
\end{aligned}
\end{equation}
Letting $n\to \infty$, we infer
\begin{align}\label{eq-hat eta}
\hat{\eta}(t)=u(t) \quad \mathbb{P} \text {-a.s. on } \Omega_t(\tau), \quad \text { for } t>0 .
\end{align}
Hence, by the part (iii) of the Amalgamation Lemma, see \autoref{lemma-Amalgamation}, and \eqref{eq-hat u}, we infer that the process $\hat{\eta}(t), t \in[0, \hat{\tau})$, is a version of the process $\hat{u}(t), t \in[0, \hat{\tau})$ and therefore we can replace $\hat{\eta}$ by $\hat{u}$ on the left-hand side of (\ref{def-hat eta}), i.e.
\begin{equation}\label{eqn-hat u}
\begin{aligned}
\hat{u}\left(t \wedge \hat{\tau}_n\right)= & S(t\wedge \hat{\tau}_n) u_0-\int_0^{t \wedge \hat{\tau}_n} S\left(t \wedge \hat{\tau}_n-r\right)B\left(\hat{u}(r)\right)\, d r \\
& +\int_0^{t\wedge \hat{\tau}_n} \mathbf{1}_{[0,\hat{\tau}_n)}(r)S(t\wedge \hat{\tau}_n-r)g\left(\hat{u}(r\wedge \hat{\tau}_n)\right) \,d W(r), \quad \mathbb{P} \text {-a.s. }
\end{aligned}
\end{equation}
Therefore, we deduce that the pair $(\hat{u}, \hat{\tau})$ belongs to the class $\Xi_1$, i.e. $(\hat{u}, \hat{\tau})$ is a maximal local solution of \eqref{eqn-NS03}.

\textbf{Step 3}: In the following, we aim to prove the uniqueness of the maximal local solutions.

To see this, let us suppose that $(u, \tau)$ and $(\rv, \sigma)$ are two maximal local solutions. Let us put $\tilde{\rho}=\tau \vee \sigma$. Then, $\tilde{\rho}$ is an accessible stopping time with approximating sequence $\tilde{\rho}_n:=\tau_n \vee \sigma_n$, for all $n \in \mathbb{N}$, where $\left(\tau_n\right)_{n \in \mathbb{N}}$ and $\left(\sigma_n\right)_{n \in \mathbb{N}}$ are approximating sequences of $\tau$ and $\sigma$, respectively. By \autoref{thm-local unique}, we infer that
\begin{align}\label{eq-3.13}
\left(u\vert_{[0, \tau \wedge \sigma)}, \tau \wedge \sigma\right) \sim\left(\rv\vert_{[0, \tau \wedge \sigma)}, \tau \wedge \sigma\right).
\end{align}

We shall now prove that $\tau=\sigma, \, \mathbb{P}$-a.s.. Suppose by contradiction that $\mathbb{P}(\{\tau \neq \sigma\})>0$. Let $\Omega_1:=\{\tau \geq \sigma\}$ and $\Omega_2:=\{\sigma>\tau\}$. We define a process $(\tilde{u}, \tilde{\rho})$ by the following formula:
\begin{align}\label{eq-3.14}
\tilde{u}(t, \omega)= \begin{cases}u(t, \omega) & \text { if } \omega \in \Omega_1 \quad \text { and } \quad t \in[0, \tau(\omega)) \\ \rv(t, \omega) & \text { if } \omega \in \Omega_2 \quad \text { and } \quad t \in[0, \sigma(\omega))
\end{cases}
\end{align}

We now claim that the process $(\tilde{u}, \tilde{\rho})$ is a local solution to problem (\ref{eqn-NS03}).

Firstly, we claim that $\tilde{u}(t), t \in[0, \tilde{\rho})$ is admissible.
We observe that since obviously the paths of the process $\tilde{u}$ are continuous, it is sufficient to show that for each $t \geq 0$, the function $\tilde{u}(t)=\tilde{u}(t, \cdot):\{t<\tilde{\rho} \} \rightarrow \rV$ is $\mathcal{F}_t$ measurable.
Note that
\[\tilde{u}(t)=u(t) \mathbf{1}_{\Omega_1}+\rv(t) \mathbf{1}_{\Omega_2},
\]
where $\mathbf{1}_{\Omega_i}$ denotes the indicator function of the set $\Omega_i$, for $i = 1,2$. Since both $\tau$ and $\sigma$ are stopping times, the sets $\Omega_1$ and $\Omega_2$ belong to the $\sigma$-algebra $\mathcal{F}_t$. Moreover, since $u(t)$ and $\rv(t)$ are $\mathcal{F}_t$-measurable, we infer the $\mathcal{F}_t$-measurability of $\tilde{u}(t)$.

Secondly, we aim to show $(\tilde{u}, \tilde{\rho})$ belongs to the class $\Xi$.
Let us choose and fix $n \in \mathbb{N}$ and $t \geq 0$. Let $\Omega^n_1:=\{\tau_n \geq \sigma_n\}$ and $\Omega^n_2:=\{\sigma_n>\tau_n\}$.
Let us observe that on $\Omega_1\cap \Omega^n_1$, we have
\[
\tilde{\rho}_n =\tau_n \vee \sigma_n=\tau_n<\tau.
\]
Therefore, since $(u, \tau)$ is a local mild solution, we deduce from \eqref{eq-3.14} and \eqref{eq-local mild solution} that for $t\geq 0$,
\begin{align}\label{eq-tilde u-1}
\tilde{u}\left(t \wedge {\tilde{\rho}}_n\right)=&u\left(t \wedge \tau_n\right)\\
= & S(t\wedge \tau_n) u_0-\int_0^{t \wedge \tau_n} S\left(t \wedge \tau_n-r\right)B\left(u(r)\right)\, d r \nonumber\\
& +\int_0^{t\wedge \tau_n} \mathbf{1}_{[0,\tau_n)}(r)S(t\wedge \tau_n-r)g\left(u(r\wedge \tau_n)\right) \,d W(r), \quad \mathbb{P} \text {-a.s. on }\Omega_1\cap \Omega^n_1.\nonumber
\end{align}
Hence, by \cite[Proposition
2.10]{brzezniak+Elworthy_2000} we deduce that
\begin{equation}\label{eq-tilde u-2}
\begin{aligned}
& \tilde{u}\left(t \wedge \tilde{\rho}_n\right)=  S\left(t \wedge \tilde{\rho}_n\right) u_0-\int_0^{t \wedge \tilde{\rho}_n} S\left(t \wedge \tilde{\rho}_n-r\right) B(\tilde{u}(r)) d r \\
&+ \int_0^{t\wedge \tilde{\rho}_n} \mathbf{1}_{[0, \tilde{\rho}_n)}(r) S(t\wedge \tilde{\rho}_n-r) g\left(\tilde{u}\left(r \wedge \tilde{\rho}_n\right)\right)\, d W(r),  \quad
\mathbb{P} \text {-a.s. on } \Omega_1\cap \Omega^n_1.
\end{aligned}
\end{equation}

On $\Omega_1\cap \Omega^n_2$, we have
\[
\tilde{\rho}_n=\sigma_n<\sigma=\tau\wedge \sigma.
\]
By \eqref{eq-3.13}, we infer
\begin{align}\label{eq-u=v}
\tilde{u}(t)=u(t)=\rv(t),\;\; \text{ on }\Omega_t(\tau\wedge\sigma).
\end{align}
Therefore, since $(\rv, \sigma)$ is a local mild solution, we deduce from \eqref{eq-u=v} and \eqref{eq-local mild solution} that for $t\geq 0$,
\begin{align}\label{eq-tilde u-4}\tilde{u}\left(t \wedge {\tilde{\rho}}_n\right)=&\tilde{u}\left(t \wedge \sigma_n\right)=u\left(t \wedge \sigma_n\right)=\rv\left(t \wedge \sigma_n\right) \\
= & S(t\wedge {\sigma}_n) u_0-\int_0^{t \wedge {\sigma}_n} S\left(t \wedge {\sigma}_n-r\right)B\left({\rv}(r)\right)\, d r \nonumber\\
& +\int_0^{t\wedge {\sigma}_n} \mathbf{1}_{[0,{\sigma}_n)}(r)S(t\wedge {\sigma}_n-r)g\left({\rv}(r\wedge {\sigma}_n)\right) \,d W(r), \quad \mathbb{P} \text {-a.s. on }\Omega_1\cap \Omega^n_1.\nonumber
\end{align}
Since $\sigma_n=\tilde{\rho}_n$, by using \eqref{eq-u=v} and \cite[Proposition
2.10]{brzezniak+Elworthy_2000} we deduce that
\begin{equation}\label{eq-tilde u-3}
\begin{aligned}
& \tilde{u}\left(t \wedge \tilde{\rho}_n\right)=  S\left(t \wedge \tilde{\rho}_n\right) u_0-\int_0^{t \wedge \tilde{\rho}_n} S\left(t \wedge \tilde{\rho}_n-r\right) B(\tilde{u}(r)) d r \\
&+ \int_0^{t\wedge \tilde{\rho}_n} \mathbf{1}_{[0, \tilde{\rho}_n)}(r) S(t\wedge \tilde{\rho}_n-r) g\left(\tilde{u}\left(r \wedge \tilde{\rho}_n\right)\right)\, d W(r),  \quad
\mathbb{P} \text {-a.s. on } \Omega_1\cap \Omega^n_2.
\end{aligned}
\end{equation}
Hence, $(\tilde{u}, \tilde{\rho})$ satisfies  (\ref{eq-local mild solution}) on $\Omega_1$. In a similar way, we can also show that $(\tilde{u}, \tilde{\rho})$ satisfies (\ref{eq-local mild solution}) on $\Omega_2$. Hence, the pair $(\tilde{u}, \tilde{\rho})$ belongs to the class $\Xi$.

Therefore, $(\tilde{u}, \tilde{\rho})$ is a local solution to problem (\ref{eqn-NS03}). Now, by construction we have $(u, \tau) \preceq(\tilde{u}, \tilde{\rho})$ and $(\rv, \sigma) \preceq(\tilde{u}, \tilde{\rho})$. Moreover, since $\mathbb{P}(\{\tau \neq \sigma\})>0$, we have $(u, \tau) \nsim(\tilde{u}, \tilde{\rho})$ or $(\rv, \sigma) \nsim(\tilde{u}, \tilde{\rho})$. This contradicts the maximality of $(u, \tau)$ or $(\rv, \sigma)$.
This completes the proof of \autoref{prop-local solution maximal}.
\end{proof}

\section{Preparations for the proof of Main Result}\label{sec-generalization}
In this section, we will give some propositions and lemmas that will be used in \autoref{sec-proof of the main result} to establish the global existence of the solution. From this section, we use Assumption \ref{assumption-phi}.

Assume that $\tau: \Omega \to (0,\infty]$ is a stopping time and
\[
u:[0,\tau) \times \Omega \to \rV
\]
with continuous trajectories which is a strong local maximal solution of our SNEs (\ref{eqn-NS03}) in the sense of \autoref{def-local solution maximal}.
The following result is important.
\begin{prop}\label{prop-maximal solution implies blowup}
Assume that $\tau: \Omega \to (0,\infty]$ is a stopping time and
\[
u:[0,\tau) \times \Omega \to \rV
\]
with continuous trajectories which is a strong local maximal solution of our SNEs (\ref{eqn-NS03}). In particular,
\begin{equation}\label{eqn-explosion}
\limsup_{t \toup \tau} \vert u\vert_{X_t}=\infty,\;\mathbb{P}-a.s. \mbox{ on the set } \{\tau<\infty\}.
    \end{equation}
\end{prop}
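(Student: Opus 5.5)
We argue by contradiction. Suppose there is a set $\Omega_0\subset\{\tau<\infty\}$ with $\mathbb{P}(\Omega_0)>0$ on which $\limsup_{t\toup\tau}\vert u\vert_{X_t}<\infty$. Since $t\mapsto\vert u\vert_{X_t}$ is non-decreasing, this means $\sup_{t<\tau}\vert u\vert_{X_t}<\infty$ on $\Omega_0$. Shrinking $\Omega_0$, we may also assume $\tau\le T$ and $\sup_{t<\tau}\vert u\vert_{X_t}\le R$ on $\Omega_0$ for some fixed $T,R$.

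\textbf{Step 1: the limit $u(\tau)$ exists.} Let $\sigma_R:=\inf\{t<\tau:\vert u\vert_{X_t}\ge R+1\}\wedge\tau$, which equals $\tau$ on $\Omega_0$. Stopped at $\sigma_R$, the process satisfies the mild equation \eqref{eq-local mild solution} with integrands bounded in the relevant spaces. By \autoref{lem-inequality}, $B(u)\mathbf{1}_{[0,\sigma_R)}\in L^2(0,T;\rH)$. By \autoref{lem-g-boundedness on balls}, $g(u)\mathbf{1}_{[0,\sigma_R)}\in\mathbb{M}^2(0,T;\rV)$. Extending both integrands by zero after $\sigma_R$, \autoref{prp-S} shows that the right-hand side of the mild formula defines a process $\bar u\in\mathbb{M}^2(X_T)$ with continuous $\rV$-valued paths on all of $[0,T]$. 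It agrees with $u$ on $[0,\sigma_R)$, so $u(\sigma_R):=\lim_{t\toup\sigma_R}u(t)$ exists in $\rV$ almost surely and is $\mathscr{F}_{\sigma_R}$-measurable.

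\textbf{Step 2: restart from $\sigma_R$.} We solve \eqref{eqn-NS03} with the shifted Wiener process $\tilde W(t)=W(\sigma_R+t)-W(\sigma_R)$, the filtration $\mathscr{F}_{\sigma_R+t}$, and initial datum $\xi:=u(\sigma_R)\mathbf{1}_{\{\vert u(\sigma_R)\vert_\rV\le R+1\}}$, which satisfies $\mathbb{E}\vert\xi\vert^2_\rV<\infty$. By \autoref{prop-local solution} and \autoref{cor-existence of u^n} this gives a local solution $(\rv,\tilde\tau)$ with $\tilde\tau>0$ almost surely. We then concatenate: $w=u$ on $[0,\sigma_R)$ and $w(t)=\rv(t-\sigma_R)$ on $[\sigma_R,\sigma_R+\tilde\tau)$ on the event $\{\vert u(\sigma_R)\vert_\rV\le R+1\}$, and $w=u$ elsewhere. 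We must check that $(w,\sigma_R+\tilde\tau)$ is a local mild solution in the sense of \autoref{def-local solution}. Using the semigroup property $S(t)=S(t-\sigma_R)S(\sigma_R)$ and the strong Markov-type splitting of the stochastic convolution, the mild formula on $[0,\sigma_R)$ and the one restarted at $\sigma_R$ combine into a single mild formula. Equivalently, by \autoref{prop-equivalence} it suffices to concatenate the strong formulations, which is simpler. An accessible approximating sequence is $\sigma_R+\tilde\tau_m$, where $(\tilde\tau_m)$ approximates $\tilde\tau$. The integrability condition \eqref{eq-local mild solution-1} holds after further localising, if needed, by $X$-norm level stopping times.

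\textbf{Step 3: contradiction with maximality.} On $\Omega_0$ we have $\sigma_R=\tau$ and $\vert u(\tau)\vert_\rV\le R$, so the new solution has lifetime $\tau+\tilde\tau>\tau$ on $\Omega_0$, a set of positive probability. Taking the pair formed from $(w,\ \tau\vee(\sigma_R+\tilde\tau))$, patched with $u$ off $\Omega_0$ in the manner of Step 3 of \autoref{prop-local solution maximal}, gives a local solution strictly above $(u,\tau)$ in the order $\preceq$. This contradicts \autoref{prop-local solution maximal}.

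\textbf{Main obstacle.} The delicate part is the restart in Step 2. One has to set up the shifted filtration and Wiener process rigorously, verify that the concatenated process is admissible, and verify that its stochastic integral coincides with the original one; the last point uses \cite[Proposition 2.10]{brzezniak+Elworthy_2000}-type localisation. I would reduce these checks to the strong formulation \eqref{eq-local strong solution} via \autoref{prop-equivalence}, where concatenating It\^o integrals at a stopping time is standard.
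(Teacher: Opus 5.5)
Your proposal is correct and follows essentially the same route as the paper: represent $u$ up to the blow-up time through the linear (convolution) problem with $B(u)$ and $g(u)$ extended by zero, deduce that $\lim_{t\toup\tau}u(t)$ exists in $\rV$, then restart the equation from this limit and concatenate to contradict maximality. Your version is a more careful rendering of the same argument, since it localises by the stopping time $\sigma_R$ instead of the paper's non-adapted event $\widetilde\Omega_R$, makes the shifted Wiener process and filtration explicit, and uses a bounded restart datum to ensure $\tilde\tau>0$; the final patching is unnecessary, though, because maximality already forces every local solution to have lifetime at most $\tau$ a.s. (and patching along $\Omega_0$ itself would not be adapted).
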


\begin{remark}\label{rem-GHZ} Our result is similar in spirit to assertion (2.35) in the paper
\cite{Glatt-Holtz+Ziane_2009}, which however was not proved in that paper.  Even though it were proven we would not be able to use it.
    \end{remark}

\begin{proof}[Proof of \autoref{prop-maximal solution implies blowup}]
Assume by contradiction that
\[
\widetilde\Omega:=\Big\{\tau<\infty\Big\}\cap
\Big\{\limsup_{t\toup\tau}\vert u\vert_{X_t}<\infty\Big\}
\]
has positive probability, i.e. $\mathbb{P}(\widetilde{\Omega})>0$. Note that
\begin{equation}\label{eqn-prob tilde Omega}
\begin{aligned}
0< \mathbb{P}\left( \widetilde\Omega \right)&=\mathbb{P}( \Big\{\tau<\infty\Big\}\cap \left\{\limsup_{t\toup\tau}\vert u\vert_{X_t}< \infty\right\})
\\
&=       \mathbb{P}(\Big\{\tau<\infty\Big\}\cap \bigcup_{R=1}^\infty \left\{\limsup_{t\toup\tau}\vert u\vert_{X_t}
     \leq R\right\})
     \\
&=     \lim_{R \to \infty} \mathbb{P}( \Big\{\tau<\infty\Big\}\cap\left\{ \limsup_{t\toup \tau}\vert u\vert_{X_t} \leq R \right\})
\\
&=     \lim_{R \to \infty} \mathbb{P}( \Big\{\tau<\infty\Big\}\cap \left\{
\sup_{t<\tau}\vert u(t)\vert_\rV^2+
\int_0^\tau \vert u(s)\vert_{D(\rA)}^2\,ds\leq R^2 \right\}).
\end{aligned}
\end{equation}

Let us recall the following notation, see \eqref{norm X_T},
\[
\vert u\vert_{X_t}^2:=\sup_{s\in[0,t]}\vert u(s)\vert_\rV^2+\int_0^t\vert u(s)\vert_{D(\rA)}^2\,ds.
\]
Define an event
\begin{align}\label{eq-OmegaR}
\widetilde{\Omega}_R:= \left\{ \omega \in  \widetilde\Omega : \sup_{t<\tau}\vert u(t)\vert_\rV^2+
\int_0^\tau \vert u(s)\vert_{D(\rA)}^2\,ds\leq R^2\right\}.
\end{align}
In view of \eqref{eqn-prob tilde Omega}, we infer
\begin{align}\label{eqn-omega_R-lim}
\mathbb{P}\left( \widetilde\Omega \right)= \lim_{R \to \infty} \mathbb{P}(
\widetilde{\Omega}_R ).
\end{align}

Let us choose and fix $R>0$. Let us define two auxiliary processes $\xi$ and $f$ on   $ \widetilde{\Omega}_R$,  by
\begin{align}\label{eq-xi f}
\xi(t) :=
\begin{cases}
g(u(t)), & t<\tau, \\
0, & t \geq \tau,
\end{cases}
\;\;\;\;
f(t) :=
\begin{cases}
B(u(t)), & t<\tau, \\
0, & t \geq \tau.
\end{cases}
\end{align}
 In view of the defiontion \eqref{eq-xi f}, Lemma \ref{lem-g-boundedness on balls} and the definition of $\Omega_R$ \eqref{eq-OmegaR} we infer that for every $T>0$,
\begin{equation}\label{eqn-E xi}
\begin{aligned}
\mathbb{E}   \int_0^T \vert \xi(s) \vert_{\rV}^2 \, ds&=\mathbb{E}   \int_0^{T\wedge\tau} \vert g(u(s)) \vert_{\rV}^2 \, ds\leq C_6\mathbb{E}   \int_0^{\tau} \vert u \vert_{\rV}^2 \, ds\\
&\leq C_6\mathbb{E}   \sup_{t<\tau}\vert u \vert_{\rV}^2 \leq C_6 R^2.
\end{aligned}
\end{equation}
Moreover, by the defiontion \eqref{eq-xi f}, Lemma \ref{lem-inequality} and the definition \eqref{eq-OmegaR} we infer that for every $T>0$,
\begin{equation}\label{eqn-E f}
\begin{aligned}
\mathbb{E}   \int_0^T \vert f(s) \vert_{\rH}^2 \, ds&=\mathbb{E}   \int_0^\tau \vert B(u(s)) \vert_{\rH}^2 \, ds\leq C\mathbb{E}   \int_0^{\tau} \vert u \vert_{\rV}^3\vert u\vert_{D(\rA)} \, ds\\
&\leq C\mathbb{E}   \sup_{t<\tau}\vert u \vert_{\rV}^3 \int_0^{\tau}\vert u\vert_{D(\rA)} \, ds\leq C R^4.
\end{aligned}
\end{equation}
Hence, inequality \eqref{eqn-E xi} and \eqref{eqn-E f} implies
\begin{equation}
    \mathbb{E} \left[   \int_0^T \vert \xi(s) \vert_{\rV}^2 \, ds+ \int_0^T \vert f(s) \vert_{\rH}^2 \, ds  \right] < \infty ,\;\;T>0
\end{equation}
and therefore
the following linear stochastic equation
\begin{equation}\label{eq-linear}
dv(t) + \rA v(t)\,dt + f(t)\,dt = \xi(t)\, dW(t),
\;\; v(0)=u_0.
\end{equation}
 has a unique strong solution $v$ which satisfies that for every $T>0$,
\[
v \in C([0,T];\rV)\cap L^2(0,T;D(\rA)), \;\; \mathbb{P}\text{-a.s.}
\]
see e.g. \cite[Proposition 6.11]{DaPrato+Zabczyk_2014}. Arguing as in the proof of Theorem 1.1 in \cite{Brzezniak+Maslowski+Seidler_2005}, we infer that

\begin{equation}\label{eq-vu equal}
v(t)=u(t), \;\; t<\tau ,\;\mathbb{P}-a.s.\mbox{ on }\widetilde{\Omega}_R
\end{equation}
where $u $ is the solution to   our main problem \eqref{eqn-NS03}, i.e.
\begin{align*}
&du(t)+\big[\rA u(t)+B(u(t))\big]\,dt=g(u(t))  \,dW(t), \;\; t \in[0, \infty),\\
&u(0)=u_0.
\end{align*}
Indeed,
\begin{equation}\label{eq-local mild solution-3}
\begin{aligned}
v\left(t \wedge \tau\right)=&   S\left(t \wedge \tau\right) u_0-\int_0^{t \wedge \tau} S\left(t \wedge \tau-r\right) f(r) \,d r \\
&+ \int_0^t \mathbf{1}_{\left[0, \tau\right)}(r) S(t\wedge \tau-r) \xi(r)\, d W(r)\\
=&   S\left(t \wedge \tau\right) u_0-\int_0^{t \wedge \tau} S\left(t \wedge \tau-r\right) B(u(r)) \,d r \\
&+ \int_0^{t\wedge \tau} \mathbf{1}_{\left[0, \tau\right)}(r) S(t\wedge \tau-r) g\left(u\left(r \wedge \tau\right)\right)\, d W(r)\\
=&u\left(t \wedge \tau\right),  \quad
\mathbb{P} \text {-a.s. }.
\end{aligned}
\end{equation}
Therefore, \eqref{eq-vu equal} follows. Thus,
\[
\lim_{t \toup \tau} u(t) = \lim_{t \toup \tau} v(t).
\]

We put
\begin{align}\label{eq-z_0}
z_0 := \lim_{t\toup \tau} u(t).
\end{align}
Since $v \in C([0,T];\rV)$, we infer that
\[
z_0\in\rV.
\]

Next we consider NSE with random noise:
\[
dz + \rA z\, dt + Bz(t)\,dt = g(z(t))\, dW(t), \;\; t>\tau,
\]
with initial data
\[
z(\tau) = z_0.
\]
By the local existence theorem \autoref{prop-local solution}, there exists $\tau^\ast>\tau$ and an unique local solution $(z,\tau^\ast)$ satisfies
\begin{align}\label{eq-solution z}
z \in C([\tau,\tau^\ast);\rV)\cap L^2(\tau,\tau^\ast;D(\rA)), \;\; \mathbb{P}\text{-a.s.}.
\end{align}

Put
\[
U(s) =
\begin{cases}
u(s), & s \in [0,\tau),\\
z(s), & s \in [\tau,\tau^\ast).
\end{cases}
\]
Therefore, in view of \eqref{eq-z_0} and \eqref{eq-solution z}
\begin{equation}
\begin{aligned}
&U:[0,\tau^\ast)\to \rV \text{ is continuous}, \\
&\int_0^{\tau^\ast} \vert U(s)\vert_{D(\rA)}^2 \,ds=\int_0^\tau \vert u(s)\vert_{D(\rA)}^2 \,ds+\int_\tau^{\tau^\ast} \vert z(s)\vert_{D(\rA)}^2 \,ds < \infty,
\;\; \mathbb{P}\text{-a.s.}.
\end{aligned}
\end{equation}

Thus $U$ is a solution of stochastic NSE \eqref{eqn-NS03} on $[0,\tau^\ast)$.
This contradicts the maximality of $u$ on $[0,\tau)$. Therefore, $\mathbb{P}(\widetilde\Omega_R)=0$ for every $R>0$. In view of \eqref{eqn-omega_R-lim},
\[
\mathbb{P}(\widetilde\Omega)= \lim_{R \to \infty} \mathbb{P}(
\widetilde{\Omega}_R )=0.\]
The proof of \autoref{prop-maximal solution implies blowup} is complete.
\end{proof}

\begin{prop}\label{prop-tau_m}
Define a sequence of stopping times $(\bar{\tau}_m)_{m\in \mathbb{N}}$ by
\begin{align}\label{eqn-tau_m}
\bar{\tau}_m:=\inf \left\{t \in[0, \tau):\vert u\vert_{X_t} \geq m\right\},\;\;m\in \mathbb{N}.
\end{align}
Then the following properties hold:
\begin{trivlist}
\item[(i)] $\bar{\tau}_m\toup\tau,\;\;\mathbb{P}-$ a.s.;
\item[(ii)] For every $T>0$,
\begin{equation}\label{eqn-tau-tau_m}
\mathbb{P}(\tau \leq  T) = \lim_{m \to \infty} \mathbb{P}\left(\bar{\tau}_m\leq T\right).
\end{equation}
\end{trivlist}
\end{prop}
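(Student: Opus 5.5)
The plan has two parts. First we establish (i), the monotonicity and convergence of $\bar\tau_m$, using \autoref{prop-maximal solution implies blowup}. Then (ii) follows from a set-theoretic monotone limit argument.

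\textbf{Part (i).} We use the convention $\inf\emptyset=\tau$.

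The map $t\mapsto \vert u\vert_{X_t}$ is non-decreasing and continuous on $[0,\tau)$. Continuity holds because $u\in C([0,t];\rV)$ and the integral term is continuous in $t$. Hence $\bar\tau_m$ is a stopping time: it is the hitting time of the closed set $[m,\infty)$ by an adapted continuous process. Moreover $\bar\tau_m\le \bar\tau_{m+1}\le\tau$, since reaching level $m+1$ requires passing level $m$. So $\bar\tau:=\lim_m\bar\tau_m\le\tau$ exists.

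Suppose that $\bar\tau(\omega)<\tau(\omega)$ on a set of positive probability. Then $\bar\tau_m(\omega)<\tau(\omega)$ for all $m$, so by continuity $\vert u\vert_{X_{\bar\tau_m}}=m$. On the other hand, monotonicity gives $\vert u\vert_{X_{\bar\tau_m}}\le \vert u\vert_{X_{\bar\tau}}<\infty$ because $\bar\tau<\tau$. Letting $m\to\infty$ gives a contradiction. Hence $\bar\tau=\tau$ almost surely.

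Strict increase $\bar\tau_m<\tau$ needs separate treatment.
\begin{itemize}
\item On $\{\tau<\infty\}$, \autoref{prop-maximal solution implies blowup} gives $\sup_{t<\tau}\vert u\vert_{X_t}=\infty$. So every level $m$ is reached before $\tau$, and $\bar\tau_m<\tau$.
\item On $\{\tau=\infty\}$, each $\bar\tau_m$ is either finite, hence $<\tau$, or equals $\infty=\tau$. Only convergence is needed there anyway.
\end{itemize}
This is where we rely on the explosion criterion. I expect this to be the only delicate point: making sure the blow-up of $\vert u\vert_{X_t}$ as $t\nearrow\tau$ from \autoref{prop-maximal solution implies blowup} is used on $\{\tau<\infty\}$. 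That blow-up is what forces the hitting times to stay strictly below $\tau$ and, as a by-product, makes $\tau$ accessible.

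\textbf{Part (ii).} Fix $T>0$. Since $\bar\tau_m\le\tau$, we have $\{\tau\le T\}\subset\{\bar\tau_m\le T\}$ for every $m$. The events $\{\bar\tau_m\le T\}$ are decreasing in $m$, because $\bar\tau_m$ is non-decreasing.

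We claim that
\[
\bigcap_m\{\bar\tau_m\le T\}=\{\tau\le T\}
\]
up to a null set. If $\bar\tau_m\le T$ for all $m$, then $\tau=\lim_m\bar\tau_m\le T$ by (i). Continuity of $\mathbb{P}$ along decreasing sequences then yields $\mathbb{P}(\tau\le T)=\lim_{m\to\infty}\mathbb{P}(\bar\tau_m\le T)$, which is \eqref{eqn-tau-tau_m}.
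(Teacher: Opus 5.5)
Your proposal is correct and follows essentially the paper's route. For (i), the paper also argues that $\vert u\vert_{X_t}<\infty$ for every $t<\tau$ forces $\lim_m\bar\tau_m=\tau$; note that the paper derives this finiteness explicitly from the approximating sequence $\tau_n\nearrow\tau$ in \autoref{def-local solution}(iii), whereas you assume it implicitly when asserting continuity of $t\mapsto\vert u\vert_{X_t}$ on $[0,\tau)$. For (ii), the paper uses the same monotone limit of the events $\{\bar\tau_m\le T\}$. Your extra appeal to \autoref{prop-maximal solution implies blowup} to get $\bar\tau_m<\tau$ on $\{\tau<\infty\}$ is valid but not needed, since the paper only claims monotone convergence.
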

\begin{proof}[Proof of part (i)]
Note that $\bar{\tau}_m\leq\tau$ and $m\mapsto\bar{\tau}_m$ is non-decreasing, hence
$\lim_{m\to \infty}\bar{\tau}_m$ exists and
\begin{align}\label{eq-lim bat tau leq tau}
\lim_{m\to \infty}\bar{\tau}_m\leq\tau.
\end{align}
In view of \autoref{def-local solution}, there exists an approximating sequence $\left(\tau_m\right)_{m \in \mathbb{N}}$ of $\tau$,
such that $\tau_n\toup\tau,\;\mathbb{P}-$a.s. and  for all $m \in \mathbb{N}$ and $t \geq 0$, we have
\[
\vert u\vert_{X_{\tau_m}}<\infty,\;\;\mathbb{P}-a.s..
\]
Define the sets
\begin{equation}
\begin{aligned}
&A:=\{\tau_n \toup \tau\},\\
&B_n:=\{\vert u\vert_{X_{\tau_n}}<\infty\},\ n\in\mathbb{N},\\
&\Omega_0:=\Big\{\tau_n\toup\tau\Big\}\ \cap\ \bigcap_{n\geq1}\Big\{\vert u\vert_{X_{\tau_n}}<\infty\Big\}=A\cap \bigcap_{n\geq1} B_n.
\end{aligned}
\end{equation}
Note that $\mathbb{P}(A)=1$ and $P(B_n)=1$ for every $n\in\mathbb{N}$, we infer
\[
\mathbb{P}(\Omega_0^c)
=\mathbb{P}\!\left( A^c \ \cup\  \bigcup_{n\ge1} B_n^c \right)
\leq \mathbb{P}(A^c) + \sum_{n\ge1}\mathbb{P}(B_n^c)
=0.
\]
which implies $\mathbb{P}(\Omega_0)=1$.

Let us choose and fix $\omega\in\Omega_0$.

Firstly, assume that $\tau(\omega)<\infty$. Let us choose and fix  $\eps\in(0,\tau(\omega))$.  Since $\tau_n\toup\tau$ on $\Omega_0$, there exists $k\in \mathbb{N}$ such that
\[
\vert u\vert_{X_{\tau(\omega)-\eps}}\leq \vert u\vert_{X_{\tau_{k}(\omega)}}<\infty.
\]
Therefore, there exists $m:=m(\eps)\in\mathbb{N}$ such that
\[
\vert u\vert_{X_{\tau(\omega)-\eps}}<m.
\]
Since $ t\mapsto \vert u\vert_{X_t}$ is non-decreasing,
$\vert u\vert_{X_t}<m$ for all $t\leq\tau(\omega)-\eps$. Hence, by the definition of $\bar{\tau}_m$ \eqref{eqn-tau_m} we infer
\[
\bar{\tau}_m(\omega)>\tau(\omega)-\eps.
\]
Note that $m\mapsto \bar{\tau}_m$ is non-decreasing,
\[
\lim_{m\to\infty}\bar{\tau}_m(\omega)>\tau(\omega)-\eps.
\]
Since $\eps$ is arbitrary, we infer that
\[
\lim_{m\to\infty}\bar{\tau}_m(\omega)\geq\tau(\omega).
\]
Together with \eqref{eq-lim bat tau leq tau} we infer that $\lim_{m\to\infty}\bar{\tau}_m=\tau$ on $\Omega_0$.
Thus, $\bar{\tau}_m\toup\tau,\mathbb{P}-$ a.s.

Now we consider $\tau(\omega)=\infty$.
Recall that for $\omega\in\Omega_0$,
\[
\tau_n(\omega) \toup \tau(\omega)=\infty \mbox{ and } \vert u\vert_{X_{\tau_n(\omega)}}<\infty \mbox{ for every } n\in\mathbb{N}.
\]
Let us choose and fix $L>0$. Therefore, there exists $n\in\mathbb{N}$ such that $\tau_n(\omega)>L$, we infer $\vert u\vert_{X_L}\leq\vert u\vert_{X_{\tau_n(\omega)}}<\infty$. Thus, there exists $m_L>0$ such that $\vert u\vert_{X_L}<m_L$. Since $ t\mapsto \vert u\vert_{X_t}$ is non-decreasing, we infer that for all $m\geq m_L$ and $t\leq L$, $\vert u\vert_{X_t}<m$, thus $\bar{\tau}_m(\omega)>L$. Since $L>0$ is arbitrary, it follows that $\bar{\tau}_m(\omega)\toup\infty=\tau(\omega)$ on $\Omega_0$. Therefore, $\bar{\tau}_m\toup\tau,\mathbb{P}-$ a.s..

The proof of of part (i) is thus complete.
\end{proof}
\begin{proof}[Proof of part (ii)]
Let us choose and fix $T>0$ and $\omega\in \Omega_0$.

Since $\bar{\tau}_m(\omega)\toup\tau(\omega)$, assume that $\tau\leq T$, we infer that $\bar{\tau}_m\leq \tau\leq T$ for all $m$, which implies $\{\tau\leq T\} \cap \Omega_0 \subseteq\bigcap_{m=1}^\infty \{\bar{\tau}_m\leq T\} \cap \Omega_0$.

Conversely, if $\bar{\tau}_m(\omega)\leq T$ for all $m$, taking limits yields $\tau(\omega)=\lim_m \bar{\tau}_m(\omega)\leq T$, which implies $\bigcap_{m=1}^\infty \{\bar{\tau}_m\leq T\}\cap \Omega_0 \subseteq\{\tau\leq T\}\cap \Omega_0 $. Hence, we infer
\[
\{\tau\leq T\}\cap \Omega_0=\bigcap_{m=1}^\infty \{\bar{\tau}_m\leq T\}\cap \Omega_0.
\]
Therefore, recall that $\mathbb{P}(\Omega_0)=1$ and $m\mapsto\bar{\tau}_m$ is non-decreasing, we infer that $\mathbf{1}_{\{\bar{\tau}_m\leq T\}}\downarrow \mathbf{1}_{\{\tau\leq T\}},\mathbb{P}-$a.s.. Taking expectations and applying the monotone convergence theorem, we infer that
\begin{equation}\label{eqn-tau-tau_m-2}
\mathbb{P}(\tau \leq  T) = \lim_{m \to \infty} \mathbb{P}\left(\bar{\tau}_m\leq T\right).
\end{equation}
The proof of \autoref{prop-tau_m} is complete.
\end{proof}

\begin{remark}
In order to establish the global existence of the solution, our goal is to prove that
\begin{align}\label{eqn-tau=infity}
\tau=\infty,\;\;\mathbb{P}-a.s..
\end{align}
It is sufficient to show that
\[
\lim_{m \to \infty} \mathbb{P}\left(\bar{\tau}_m\leq  T\right)=0,\;\;T>0.
\]
Indeed,
\[
\mathbb{P}(\tau<\infty)=\mathbb{P}\Big(\bigcup_{q\in\mathbb{Q}_+}\{\tau\leq q\}\Big)\leq \sum_{q\in\mathbb{Q}_+}\mathbb{P}(\tau\leq q)=\sum_{q\in\mathbb{Q}_+}\lim_{m \to \infty} \mathbb{P}(\bar{\tau}_m\leq q)=0,
\]
which implies $\mathbb{P}(\tau=\infty)=1$.
\end{remark}

Let us denote
\begin{align}\label{eq-C_0}
C_0 :=\frac{27}{16} C^4,
\end{align}
where $C$ is the constant from the Agmon inequality \eqref{eqn-the Agmon inequality}. We need the following proposition.
\begin{prp}\label{prop-Assumption1-1}
The following inequality holds.
\begin{align}\label{eqn-Generalized dissipativit-1}
-2\langle \rA u+B(u),  \rA u\rangle_{L^2} \leq -\vert \rA u\vert_{\rH}^2+C_0\vert \rA^\frac{1}{2}u\vert_\rH^6,\;\;u\in D(\rA).
\end{align}
\end{prp}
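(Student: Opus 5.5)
Expanding the left-hand side gives $-2\vert \rA u\vert_\rH^2 - 2\langle B(u),\rA u\rangle_{L^2}$, so the claim is equivalent to
\[
2\vert\langle B(u),\rA u\rangle_{L^2}\vert \leq \vert \rA u\vert_\rH^2 + C_0\vert \rA^{\frac12}u\vert_\rH^6, \;\; u\in D(\rA).
\]
We bound the trilinear term with the Agmon inequality and then split it with Young's inequality; the only real work is tracking the constant so that it comes out exactly as $C_0=\frac{27}{16}C^4$.

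\textbf{Step 1: bound the trilinear term.} By Cauchy--Schwarz and \eqref{eqn-B-H norm L2},
\[
\vert\langle B(u),\rA u\rangle\vert\leq \Vert (u\cdot\nabla)u\Vert_{L^2}\vert \rA u\vert_\rH\leq \Vert u\Vert_{L^\infty}\Vert\nabla u\Vert_{L^2}\vert \rA u\vert_\rH .
\]
We use \autoref{lem-Agmon} in its homogeneous form \eqref{eqn-the Agmon inequality} rather than \autoref{cor-Agmon}, since only $\rA$-norms should appear. Together with $\Vert\nabla u\Vert_{L^2}=\vert \rA^{1/2}u\vert_\rH$ this gives
\[
2\vert\langle B(u),\rA u\rangle\vert\leq 2C\,\vert \rA^{\frac12}u\vert_\rH^{3/2}\,\vert \rA u\vert_\rH^{3/2}.
\]

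\textbf{Step 2: split with Young's inequality.} Apply $ab\leq \frac{a^p}{p}+\frac{b^q}{q}$ with $p=\frac43$, $q=4$, $a=(\eta\vert \rA u\vert_\rH)^{3/2}$ and $b=2C\eta^{-3/2}\vert \rA^{1/2}u\vert_\rH^{3/2}$. Choose $\eta$ so that $\frac34\eta^2=1$, i.e.\ $\eta^2=\frac43$; the first term is then exactly $\vert \rA u\vert_\rH^2$. The second term is
\[
\frac14(2C)^4\eta^{-6}\vert \rA^{\frac12}u\vert_\rH^6=4C^4\Big(\frac34\Big)^3\vert \rA^{\frac12}u\vert_\rH^6=\frac{27}{16}C^4\vert \rA^{\frac12}u\vert_\rH^6 ,
\]
which is $C_0$ as defined in \eqref{eq-C_0}.

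\textbf{Step 3: combine.} Substituting into $-2\vert \rA u\vert_\rH^2+2\vert\langle B(u),\rA u\rangle\vert$ yields \eqref{eqn-Generalized dissipativit-1}. The one point that needs care is that the Agmon inequality \eqref{eqn-the Agmon inequality} is available in the homogeneous form in both domain cases (i) and (ii) of \autoref{lem-Agmon}, which we take as given.
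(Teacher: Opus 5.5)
Your proof is correct and follows essentially the same argument as the paper. Both proofs use Cauchy--Schwarz with $\Vert (u\cdot\nabla)u\Vert_{L^2}\le \Vert u\Vert_{L^\infty}\Vert\nabla u\Vert_{L^2}$, then the homogeneous Agmon inequality, then Young's inequality with exponents $4/3$ and $4$. The only cosmetic difference is that you put coefficient $1$ on $\vert\rA u\vert_\rH^2$ when bounding $2\vert\langle B(u),\rA u\rangle\vert$, whereas the paper bounds $\vert\langle B(u),\rA u\rangle\vert$ by $\frac12\vert\rA u\vert_\rH^2+\frac12 C_0\vert\rA^{1/2}u\vert_\rH^6$ and then doubles, giving the same constant $C_0=\frac{27}{16}C^4$.
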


Let us recall the following notation
\begin{align}
     \Vert u\Vert_{L^{\infty}}^2&:= \esssup \vert u(x) \vert^2
  =\esssup \sum_{i=1}^3\vert u^i(x) \vert^2 \\
  \Vert \nabla u\Vert_{L^2}^2 &:=
  \int_{\dom} \sum_{i,j=1}^3
    \vert   D_j u^i(x)\vert^2\, dx.
\end{align}
It will be used below.

\begin{proof}[Proof of \autoref{prop-Assumption1-1}]
By the Cauchy–Schwarz inequality in $\mathbb{R}^3$ we have
\begin{equation}\label{eqn-(u cdot nabla)u-1}
\begin{aligned}
\Vert (u \cdot \nabla) u\Vert_{L^2}^2&= \int_{\dom}
\vert (u(x) \cdot \nabla) u(x)\vert^2\, dx= \int_{\dom} \sum_{i=1}^3 \vert \sum_{j=1}^3 u^j(x) D_j u^i(x) \vert^2\, dx \\
&\leq \int_{\dom} \sum_{i=1}^3 \left( \sum_{j=1}^3 |u^j(x)|^2 \right) \left( \sum_{j=1}^3 \vert D_j u^i(x)\vert^2 \right) \,dx \\
&= \int_{\dom} \vert u(x)\vert^2 \sum_{i,j=1}^3  \vert D_j u^i(x)\vert^2  \,dx \\
&\leq \Vert u\Vert_{L^\infty}^2 \int_{\dom} \sum_{i,j=1}^3  \vert D_j u^i(x)\vert^2  \,dx
= \Vert u\Vert_{L^\infty}^2 \Vert\nabla u\Vert_{L^2}^2.
\end{aligned}
\end{equation}

Hence, by the Cauchy-Scharz and Young inequalities,  the Agmon inequality \autoref{lem-Agmon} we get
\begin{equation}\label{eqn-BA-L2}
\begin{aligned}
-\langle B(u), \rA u\rangle_{L^2}
&\leq \Vert \rA u\Vert _{L^2} \Vert (u \cdot \nabla) u\Vert_{L^2} \leq
 \Vert \rA u\Vert _{L^2} \Vert u\Vert_{L^{\infty}}\Vert \nabla u\Vert_{L^2} \\
&
 \leq C  \vert \rA u\vert_{\rH}^{3/2} \Vert \nabla u\Vert_{L^2}^{3/2}
\leq  \frac{1}{2}\vert \rA u\vert_{\rH}^2+\frac{27}{32}C^4\Vert \nabla u\Vert_{L^2}^6 \\
&=  \frac{1}{2}\vert \rA u\vert_{\rH}^2+\frac{1}{2}C_0\vert \rA^\frac{1}{2}u\vert_\rH^6,
\end{aligned}
\end{equation}
where  $C_0$ is the constant defined in equality \eqref{eq-C_0}.

Therefore, we get
\begin{equation}\label{eqn-8.20}
\begin{aligned}
& -\langle \rA u+B(u), \rA u\rangle_{L^2}=-\langle \rA u,\rA u\rangle_{L^2}-\langle B(u),\rA u\rangle_{L^2}\\
\leq &-\vert \rA u\vert_{\rH}^2+ \frac{1}{2}\vert \rA u\vert_{\rH}^2+\frac{1}{2}C_0\vert \rA^\frac{1}{2}u\vert_\rH^6
= -\frac{1}{2}\vert \rA u\vert_{\rH}^2+\frac{1}{2}C_0\vert \rA^\frac{1}{2}u\vert_\rH^6,\,\, u\in D(\rA),
\end{aligned}
\end{equation}
which implies \eqref{eqn-Generalized dissipativit-1} holds.
\end{proof}

The next lemma will be essential in the proof of the global solution.
\begin{lemma}\label{lem-prob}
Assume that $(u,\tau)$ is the local maximal solution to problem \eqref{eqn-NS03} and $\bar{\tau}_m$ is defined in \eqref{eqn-tau_m}. Let $p\in(0,1)$. If $\kappa=2$, we further assume that  $(1-p)b^2>C_0$, where $C_0$ is the constant defined in \eqref{eq-C_0}. Then for every $R\geq 2^\frac{2}{2-p}$ and $T\geq 0$, there exist constants $\hat{C}^1_T:=\hat{C}^1_T(p,R_0,T)>0$ and $\hat{C}^2_T:=\hat{C}^2_T(p,R_0,T)>0$, independent of $m$ and $R$, such that the following estimates hold:
\begin{trivlist}
\item[(i)]
\begin{equation}
\begin{aligned}\label{eqn-6.23-1}
& \mathbb{P}\left(\sup _{t \in[0, T\wedge \bar{\tau}_m]}\vert \rA^\frac{1}{2}u(t)\vert_\rH^2 \geq R^2 \right) \leq\frac{\hat{C}^1_T }{R^p} .
\end{aligned}
\end{equation}
\item[(ii)]
\begin{equation}\label{eqn-Au-L2-prob-1}
 \begin{aligned}
\mathbb{P}\left(\int_0^{T\wedge \bar{\tau}_m}\vert \rA u(t)\vert_\rH^2 \, d t \geq R^2\right) \leq  \frac{\hat{C}^2_T }{R^p} .
\end{aligned}
\end{equation}
\end{trivlist}
\end{lemma}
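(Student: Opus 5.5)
The plan is to apply It\^o's formula to a concave Lyapunov function of the enstrophy, namely
\[
V(u):=\big(1+\vert \rA^{\frac12}u\vert_\rH^2\big)^{p/2},
\]
along the stopped solution on $[0,T\wedge\bar\tau_m]$. On this interval $\vert u\vert_{X_t}\le m$, so \autoref{prop-equivalence} gives the strong form \eqref{eq-local strong solution}. It\^o's formula for $\vert \rA^{1/2}u\vert_\rH^2$ in the Gelfand triple $D(\rA)\subset\rV\subset\rH$ is then justified. Write $y(t):=\vert \rA^{1/2}u(t)\vert_\rH^2$. Since $g(u)=\phi(u)u$, we obtain
\[
dy=\big(-2\langle \rA u+B(u),\rA u\rangle+\phi(u)^2 y\big)\,dt+2\phi(u)\,y\,dW .
\]
Composing with $F(y)=(1+y)^{p/2}$, whose derivatives are $F'=\tfrac p2(1+y)^{p/2-1}$ and $F''=\tfrac p2(\tfrac p2-1)(1+y)^{p/2-2}$, the It\^o correction is $\tfrac12F''\cdot 4\phi^2y^2$. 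It is negative and of size $-p(1-\tfrac p2)\phi^2 y^2(1+y)^{p/2-2}$.

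The key step is to combine the drift terms. By \autoref{prop-Assumption1-1} the deterministic part contributes at most $F'(y)\big(-\vert \rA u\vert_\rH^2+C_0y^3\big)$. The noise contributes
\[
\phi^2\Big[F'(y)y-p(1-\tfrac p2)y^2(1+y)^{p/2-2}\Big]=\tfrac p2\phi^2y(1+y)^{p/2-2}\big[(1+y)-(2-p)y\big]=\tfrac p2\phi^2y(1+y)^{p/2-2}\big[1-(1-p)y\big].
\]
For large $y$ this is approximately $-\tfrac p2(1-p)\phi^2y^2(1+y)^{p/2-2}$. By \autoref{assumption-phi}(i), $\phi^2\ge b^2y^\kappa$, so the negative term is of order $-(1-p)b^2y^{2+\kappa}(1+y)^{p/2-2}$. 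It dominates $C_0y^3(1+y)^{p/2-1}\cdot \tfrac p2$ for large $y$ when $\kappa>2$, and also when $\kappa=2$ precisely under $(1-p)b^2>C_0$. For bounded $y$, all terms are bounded by a constant $K=K(p)$. Taking the ($\kappa$-dependent) positive term $\tfrac p2\phi^2y(1+y)^{p/2-2}$ for $y\le 1/(1-p)$ requires care: there $\phi$ is not bounded by \autoref{assumption-phi}(i) alone. However, the whole bracket $1-(1-p)y$ is negative for $y>1/(1-p)$, so for small $y$ I would bound $\phi$ on $\{y\le 1/(1-p)\}$ using \autoref{assumption-phi}(ii) applied to the stopped path, with $\vert u\vert_{X_{T\wedge\bar\tau_m}}\le m$ if necessary. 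I expect this to be the main obstacle, since the constant must be independent of $m$; the intended resolution is that the leading growth allows absorbing it. We therefore obtain
\[
dV\le \big(K-\tfrac p2(1+y)^{p/2-1}\vert \rA u\vert_\rH^2\big)dt+dM,
\]
with $M$ a local martingale.

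Next, take expectations, using $\bar\tau_m$ and a further localisation so that $M$ is a true martingale. This yields
\[
\mathbb{E}V(u(t\wedge\bar\tau_m))+\tfrac p2\mathbb{E}\int_0^{t\wedge\bar\tau_m}(1+y)^{p/2-1}\vert \rA u\vert_\rH^2\,ds\le \mathbb{E}V(u_0)+KT.
\]
For the supremum in (i), I would not use BDG, since the martingale term is hard to control. Instead, because $V\ge0$ and the drift is bounded by $K$, I apply the stopping-time argument at $\sigma_R:=\inf\{t:y(t)\ge R^2\}\wedge T\wedge\bar\tau_m$. Then
\[
\mathbb{P}(\sup y\ge R^2)(1+R^2)^{p/2}\le \mathbb{E}V(u(\sigma_R))\le C_T,
\]
which gives $\hat C^1_T/R^p$.

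For (ii), on the event $\{\sup_{[0,T\wedge\bar\tau_m]}y<R^2\}$ we have $(1+y)^{p/2-1}\ge(1+R^2)^{p/2-1}$. Markov's inequality then gives
\[
\mathbb{P}\Big(\int\vert \rA u\vert_\rH^2\ge R^2,\ \sup y<R^2\Big)\le \frac{C_T(1+R^2)^{1-p/2}}{R^2}\lesssim R^{-p}.
\]
Adding the bound from (i) finishes the argument. The condition $R\ge2^{2/(2-p)}$ is used to replace $1+R^2$ by a multiple of $R^2$ cleanly.
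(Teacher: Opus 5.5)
Your overall scheme matches the paper's. You use a concave Lyapunov function of $y=|\rA^{1/2}u|_\rH^2$ that grows like $y^{p/2}$; your smooth $(1+y)^{p/2}$ replaces the paper's piecewise $l(\rho)$, which equals $\rho^2$ near $0$ and $\rho^p$ for $\rho>2R_0$. The negative It\^o correction beats $C_0y^3$ via \autoref{assumption-phi}(i). Then the stopping time $\sigma_R$ gives (i), and the split plus Markov gives (ii). Your noise bracket $\tfrac p2\phi^2y(1+y)^{p/2-2}[1-(1-p)y]$ and your large-$y$ comparison, including $(1-p)b^2>C_0$ for $\kappa=2$, are correct.

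The gap is exactly the step you flagged, and your proposed remedy does not close it. On $\{y\le 1/(1-p)\}$ that bracket term is positive, so you need $\phi(u)^2y\le K$ with $K$ independent of $m$. Applying \autoref{assumption-phi}(ii) to the stopped solution path only gives $\sup|\phi|\le C(m)$. No ``leading growth'' can absorb this: on a bounded sublevel set of $y$ there is no large parameter, and \autoref{assumption-phi}(i) is only a lower bound on $|\phi|$. With $K=K(m)$ the lemma is useless in the proof of \autoref{thm-main-bounded}, where it is applied with $R\sim m\to\infty$. There a bound of the form $(C+K(m)T)m^{-p}$ need not tend to zero.

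The missing ingredient is a path-independent bound on $\phi$ over sublevel sets of $y$, which comes from $g(u)=\phi(u)u$ and \autoref{ass-g-loc Lip}. By Lemma~\ref{lem-g-boundedness on balls}, if $|u|_\rV\le r$ then
\[
|\phi(u)|\,|\rA^{1/2}u|_\rH=|\rA^{1/2}g(u)|_\rH\le |g(u)|_\rV\le C_6(r)\,|u|_\rV .
\]
In a Poincar\'e domain $|u|_\rV\le C_4|\rA^{1/2}u|_\rH$. So on $\{y\le 1/(1-p)\}$ you get $\phi(u)^2y\le C_6(r_p)^2C_4^2/(1-p)$ with $r_p:=C_4(1-p)^{-1/2}$, independent of $m$ and of the path. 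This is what the paper does on $\{\rho\le 2R_0\}$.

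For $\dom=\mathbb{R}^3$, $|u|_\rV$ is not controlled by $|\rA^{1/2}u|_\rH$, and the paper also appeals to \autoref{assumption-phi-1}. To use it pointwise, apply it to the constant path $v(t)\equiv v\in D(\rA)$ on $[0,T]$ and let $T\downarrow 0$. By continuity of $\Gamma$ this gives $|\phi(v)|^2\le\Gamma(|\rA^{1/2}v|_\rH^2)$, hence $\phi^2y\le\Gamma(1/(1-p))/(1-p)$ on that set. This is enough, since $u(t)\in D(\rA)$ for a.e.\ $t$ and the term only enters the $dt$-integral.

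With this substitution the rest of your argument goes through. The $m$-dependent bound from \autoref{assumption-phi}(ii) is then needed only to make the stopped stochastic integral a true martingale, which is harmless.
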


\begin{proof}
Let $p\in(0,1)$. If $\kappa=2$, we further assume that
\begin{align*}
    (1-p)b^2>C_0,
\end{align*}
where $C_0$ is the constant defined in \eqref{eq-C_0}. Let us choose and fix $0<R_0<2^\frac{p}{2-p}$. Define the Lyapunov function by
\[
F(u):=l\left(\vert\rA^{\frac12}u\vert_{\rH}\right),
\;\; u\in D(\rA^{\frac12}),
\]
where $l\in C^2([0,\infty);[0,\infty))$ is non-decreasing such that
\begin{align}
l(\rho)=
\begin{cases}
\rho^2,&\;\; 0\leq \rho<R_0,\\
\rho^p,&\;\; \rho>2R_0.
\end{cases}
\end{align}

In view of \eqref{eqn-NS03} and \eqref{eqn-g}, for $t\leq\bar{\tau}_m$
\[
d\rA^{\frac12}u(t)
+
\left[
\rA^{\frac32}u(t)+\rA^{\frac12}B(u(t))
\right]dt
=
\phi(u(t))\rA^{\frac12}u(t)\,dW(t),
\;\; \mbox{in}\;\; \rV^\prime.
\]
Here $\rA^{\frac32}u$ and $\rA^{\frac12}B(u)$ are understood as
elements of $\rV'$ in the sense that, for every $v\in\rV$,
\begin{equation}
{}_{\rV'}\langle \rA^{\frac32}u,v\rangle_{\rV}
:=
\langle \rA u,\rA^{\frac12}v\rangle_{\rH},
\end{equation}
and
\begin{equation}
{}_{\rV'}\langle \rA^{\frac12}B(u),v\rangle_{\rV}
:=
\langle B(u),\rA^{\frac12}v\rangle_{\rH}.
\end{equation}

Then by the Itô formula for the function  $\vert \cdot\vert_\rH^2$, see \cite[Lemma 1.4]{Pardoux_1979},  for every $t>0$,
\begin{align}\label{eqn-u H1 norm-1}
&\left\vert \rA^\frac12u(t\wedge \bar{\tau}_m)\right\vert_\rH^2=\left\vert \rA^\frac12u(0)\right\vert_\rH^2\\
&+\int_0^{t\wedge \bar{\tau}_m }\left(2{}_{\rV^\prime}\left\langle
-[\rA^{\frac{3}{2}}u(s)+\rA^{\frac{1}{2}}B(u(s))], \rA^{\frac{1}{2}}u(s)\right\rangle_\rV+\vert\phi(u(s))\vert^2\vert\rA^{\frac{1}{2}}u(s)\vert_\rH ^2\right)\, d s \nonumber\\
& +2 \int_0^{t\wedge \bar{\tau}_m }\left\langle \rA^{\frac{1}{2}}u(s), \phi(u(s))\rA^{\frac{1}{2}}u(s) \right\rangle\,d W(s).\nonumber
\end{align}

Let
\[
Y(t):=|\rA^{\frac12}u(t)|_{\rH}^2,\;\;\rho(t):=\vert\rA^{\frac12}u(t)\vert_{\rH}.
\]
It follows from \eqref{eqn-u H1 norm-1} that, for
$t\leq\bar{\tau}_m$,
\begin{align*}
dY(t)
&=
\left[
-2\left\langle
\rA u(t)+B(u(t)),\rA u(t)
\right\rangle_{\rH}
+
|\phi(u(t))|^2Y(t)
\right]dt
+
2\phi(u(t))Y(t)\,dW(t).
\label{eqn-dY}
\end{align*}

Define
\begin{equation}
    h(y):=l(\sqrt{y}),
    \qquad y\geq0.
\end{equation}
Since $l(\rho)=\rho^2$ for $0\leq\rho<R_0$, we have
$h(y)=y$ for $0\leq y<R_0^2$. Moreover, $h\in C^2([0,\infty))$.
For $y>0$, writing $\rho=\sqrt{y}$, we have
\begin{equation*}
    h^{\prime}(y)=\frac{l^{\prime}(\rho)}{2\rho},
\end{equation*}
and
\begin{equation*}
    h^{\prime\prime}(y)
    =
    \frac{l^{\prime\prime}(\rho)}{4\rho^2}
    -
    \frac{l^{\prime}(\rho)}{4\rho^3}.
\end{equation*}
Applying It\^o's formula to $F(u(t))=h(Y(t))$,
for $t\in[0,\bar{\tau}_m]$,
\begin{equation}
\begin{aligned}
dF(u(t))
=&
-\frac{l^{\prime}(\rho(t))}{\rho(t)}
\left\langle \rA u(t)+B(u(t)),\rA u(t)\right\rangle dt  \\
&+
\frac12 l^{\prime\prime}(\rho(t))|\phi(u(t))|^2\rho^2(t)\,dt
+l^{\prime}(\rho(t))\phi(u(t))\rho(t)\,dW(t)\\
=&LF(u(t))\,dt+l^{\prime}(\rho(t))\phi(u(t))\rho(t)\,dW(t),
\end{aligned}
\end{equation}
where $L$ is  defined  by
\[
LF(u(t)):=-\frac{l^{\prime}(\rho(t))}{\rho(t)}
\left\langle \rA u(t)+B(u(t)),\rA u(t)\right\rangle +
\frac12 l^{\prime\prime}(\rho(t))|\phi(u(t))|^2\rho^2(t),\;\;t\geq 0.
\]
By \autoref{prop-Assumption1-1} we infer that
\begin{equation}
\begin{aligned}
-\frac{l^{\prime}(\rho)}{\rho}
\left\langle \rA u+B(u),\rA u\right\rangle \leq
-\frac12\frac{l^{\prime}(\rho)}{\rho}|\rA u|_{\rH}^2
+\frac12 C_0\frac{l^{\prime}(\rho)}{\rho}\rho^6,
\end{aligned}
\end{equation}
where  $C_0$ is the constant defined in equality \eqref{eq-C_0}.

Therefore,
\begin{align}\label{eqn-LF}
L F(u)
\leq
-\frac12\frac{l^{\prime}(\rho)}{\rho}|\rA u|_{\rH}^2
+\frac12 C_0\frac{l^{\prime}(\rho)}{\rho}\rho^6
+\frac12 l^{\prime\prime}(\rho)|\phi(u)|^2\rho^2 .
\end{align}

Firstly, we assume that $\rho>2R_0$, then $l(\rho)=\rho^p$. By part (i) of \autoref{assumption-phi}, we infer that
\begin{equation}
\begin{aligned}
L F(u)
&\leq
-\frac p2\rho^{p-2}|\rA u|_{\rH}^2+\frac12 C_0p\rho^{p+4}-\frac12p(1-p)b^2\rho^{p+2\kappa}.
\end{aligned}
\end{equation}
For $\kappa>2$, we have $p+4<p+2\kappa$. Hence, by Young's inequality, for
every $\xi>0$ and $x\geq0$,
\[
C_0x^{p+4}\leq \xi x^{p+2\kappa}+C_\xi,
\]
where
\[
C_\xi
=
\frac{p+4}{2\kappa-4}
\left(
\frac{2\kappa-4}{p+4}
\right)^{\frac{p+4}{2\kappa-4}}
C_0^{\frac{p+4}{2\kappa-4}}
\xi^{-\frac{p+4}{2\kappa-4}}.
\]
Taking
\begin{equation}
\xi=\frac12(1-p)b^2,
\end{equation}
we infer that for $\kappa>2$, there exists a constant $C>0$ such that
\begin{equation}
LF(u)
\leq
C-\frac p2\rho^{p-2}|\rA u|_{\rH}^2.
\end{equation}

If $\kappa=2$, since $(1-p)b^2>C_0$,
\begin{align}
LF(u)
&\leq
-\frac p2\rho^{p-2}|\rA u|_{\rH}^2
-\frac p2
\left[
(1-p)b^2-C_0
\right]\rho^{p+4}
\leq
-\frac p2\rho^{p-2}|\rA u|_{\rH}^2.
\end{align}
Since $p-2<0$,
\begin{equation}
\rho^{p-2}
\geq
(1+\rho^2)^{\frac{p-2}{2}},
\qquad \rho>0.
\end{equation}
Consequently, there exist $C:=C(p)>0$ and $c>0$ such that
\begin{align}\label{eqn-LF-1}
L F(u)
\leq
C-c(1+\rho^2)^{\frac{p-2}{2}}
|\rA u|_{\rH}^2,
\qquad
\rho>2R_0.
\end{align}

If $0\leq \rho\leq 2R_0$, there exists a constant $C>0$ such that
\[
\vert\frac{l^{\prime}(\rho)}{\rho}\vert+|l^{\prime\prime}(\rho)|+\rho\leq C.
\]
Moreover, in view of \autoref{ass-g-loc Lip} and \autoref{assumption-phi-1}, for $\rho\leq 2R_0$, there exists a constant $C_{R_0}>0$ such that
\[
|\phi(u)|^2\rho^2
\leq C_{R_0}.
\]
Therefore, there exists $C:=C(R_0)$ such that
\begin{align}\label{eqn-LF-11}
C_0\frac{l^{\prime}(\rho)}{\rho}\rho^6
+
\frac12 l^{\prime\prime}(\rho)|\phi(u)|^2\rho^2
\leq C,
\end{align}
Note that for $0<\rho<R_0$
\[
\frac{l^{\prime}(\rho)}{\rho}(1+\rho^2)^{\frac{2-p}{2}}=2(1+\rho^2)^{\frac{2-p}{2}}\geq 2,
\]
and for $R_0\leq \rho\leq 2R_0$, let
\[
c_l:=\min_{\rho\in[R_0,2R_0]}\frac{l^{\prime}(\rho)}{\rho}(1+\rho^2)^{\frac{2-p}{2}}>0.
\]
Taking
\[
c:=\min\{2,c_l\} >0,
\]
then we infer that
\[
\frac{l^{\prime}(\rho)}{\rho}
\geq
c(1+\rho^2)^{\frac{p-2}{2}},\;\; 0< \rho\leq 2R_0.
\]
Therefore, inequality \eqref{eqn-LF} and \eqref{eqn-LF-11} implies there exists $c>0$ such that
\begin{align}\label{eqn-LF-2}
L F(u)
\leq C-c(1+\rho^2)^{\frac{p-2}{2}}|\rA u|_{\rH}^2,
\;\;0\leq \rho\leq 2R_0.
\end{align}

Consequently, by inequality \eqref{eqn-LF}, \eqref{eqn-LF-1} and \eqref{eqn-LF-2}, for $\rho\geq 0$ and $t\in[0,\bar{\tau}_m]$, there exists $C:=C(p,R_0)$ and $c>0$ such that
\[
dF(u(t))
+c(1+\rho^2(t))^{\frac{p-2}{2}}\vert\rA u(t)\vert_{\rH}^2\,dt\leq C\,dt
+l^{\prime}(\rho(t))\phi(u(t))\rho(t)\,dW(t).
\]
Taking expectations, for every $T>0$,
\begin{equation}\label{eqn-EF}
\begin{aligned}
&\mathbb{E} F(u(T\wedge\bar{\tau}_m))+c\mathbb{E}\int_0^{T\wedge\bar{\tau}_m}
(1+\rho^2(s))^{\frac{p-2}{2}}|\rA u(s)|_{\rH}^2\,ds \leq \mathbb{E} F(u_0)+CT.
\end{aligned}
\end{equation}

In the following, we aim to prove \eqref{eqn-6.23-1}. Since $R\geq 2^\frac{2}{2-p}>2R_0>0$. Define
\[
\sigma_R:=\inf\left\{t\in[0,\tau):\rho(t)\geq R\right\}.
\]
Therefore,
\begin{equation}\label{eqn-6.23-1-1}
\begin{aligned}
\mathbb{P}\left(
\sup_{t\in[0,T\wedge\bar{\tau}_m]}\vert\rA^{\frac12}u(t)\vert_{\rH}\geq R\right)
\leq \mathbb{P}\left(\sigma_R\leq T\wedge\bar{\tau}_m\right)
\end{aligned}
\end{equation}
Since $\rho(\sigma_R)=R>2R_0$ on $\{\sigma_R\leq T\wedge\bar{\tau}_m\}$, we infer that
\[
F(u(\sigma_R))=l(R)=R^p,
\]
which implies
\[
F(u(T\wedge\sigma_R\wedge\bar{\tau}_m))
\geq
R^p\mathbf{1}_{\{\sigma_R\leq T\wedge\bar{\tau}_m\}}.
\]
Hence, in view of \eqref{eqn-EF}, there exists $\hat{C}^1_T>0$ such that
\begin{align}\label{eqn-6.23-1-2}
\mathbb{P}(\sigma_R\leq T\wedge\bar{\tau}_m)
\leq
\frac{1}{R^p}\mathbb{E} F(u(T\wedge\sigma_R\wedge\bar{\tau}_m))
\leq
\frac{1}{R^p}(\mathbb{E}F(u_0)+CT)=:\frac{\hat{C}^1_T}{R^p}.
\end{align}
Therefore, \eqref{eqn-6.23-1-1} and \eqref{eqn-6.23-1-2} implies \eqref{eqn-6.23-1} holds.

In the following, we aim to prove \eqref{eqn-Au-L2-prob-1}. Since $R\geq 2^\frac{2}{2-p}>2R_0>0$,
\begin{equation}
\begin{aligned}
&\mathbb{P}\left(
\int_0^{T\wedge\bar{\tau}_m}\vert\rA u(t)\vert_{\rH}^2\,dt\geq R^2
\right) \\
&\leq
\mathbb{P}\left(
\sup_{t\in[0,T\wedge\bar{\tau}_m]}\rho(t)\geq R
\right)+
\mathbb{P}\left(
\int_0^{T\wedge\bar{\tau}_m}
\vert\rA u(t)\vert_{\rH}^2\,dt
\geq R^2,\,
\sup_{t\in[0,T\wedge\bar{\tau}_m]}\rho(t)<R
\right).
\end{aligned}
\end{equation}
Recall that $p\in (0,1)$. Note that if $\sup_{t\in[0,T\wedge\bar{\tau}_m]}\rho(t)<R$, then
\[
(1+\rho^2(t))^{\frac{p-2}{2}} \geq (1+R^2)^{\frac{p-2}{2}},
\]
which implies
\[
\int_0^{T\wedge\bar{\tau}_m}
\vert\rA u(t)\vert_{\rH}^2\,dt
\leq(1+R^2)^{\frac{2-p}{2}}
\int_0^{T\wedge\bar{\tau}_m}(1+\rho^2(t))^{\frac{p-2}{2}}\vert\rA u(t)\vert_{\rH}^2\,dt.
\]
Therefore, by Markov's inequality, there exists $\hat{C}^2_T>0$ such that
\begin{equation}
\begin{aligned}
&\mathbb{P}\left(\int_0^{T\wedge\bar{\tau}_m}\vert\rA u(t)\vert_{\rH}^2\,dt\geq R^2\right) \\
&\leq
\frac{\hat{C}^1_T}{R^p}
+\mathbb{P}\left(\int_0^{T\wedge\bar{\tau}_m}(1+\rho^2(t))^{\frac{p-2}{2}}\vert\rA u(t)\vert_{\rH}^2\,dt
\geq\frac{R^2}{(1+R^2)^{\frac{2-p}{2}}}\right) \\
&\leq
\frac{\hat{C}^1_T}{R^p}+\frac{(\mathbb{E}F(u_0)+CT)(1+R^2)^{\frac{2-p}{2}}}{cR^2}\leq  \frac{\hat{C}^2_T }{R^p} .
\end{aligned}
\end{equation}

The proof of \autoref{lem-prob} is complete.

\end{proof}

\section{Completion of the proof of Main Result}\label{sec-proof of the main result}

The aim of this section is to prove that the local maximal solution from Proposition \ref{prop-local solution maximal} is global.

The proof is divided into two parts. In \autoref{subsec-Poincar\'e domain-global}, we will proof the first case of our main result \autoref{thm-main}, where $\dom$ is either a bounded domain or an unbounded domain satisfying the Poincar\'e condition. The second case, when $\dom=\mathbb{R}^3$, will be treated in \autoref{subsec-Full space-global}.
In the whole section we use Assumption \ref{assumption-phi} and we will further use \autoref{assumption-phi-1} in \autoref{subsec-Full space-global}.

\subsection{Poincar\'e domain}\label{subsec-Poincar\'e domain-global}
In this subsection, we proof the first case of our main result \autoref{thm-main}. We assume in this subsection that $\dom$ is a bounded domain or an unbounded domain satisfying the Poincar\'e condition.

\begin{theorem}\label{thm-main-bounded}
Assume that $u_0$ is an $\mathscr{F}_0$-measurable $\rV$-valued random variable with $\mathbb{E}\vert u_0\vert_\rV^2 < \infty$ and the function $\phi$ satisfies \autoref{assumption-phi}, then there exists a unique global solution to the main problem \eqref{eqn-NS03}.
\end{theorem}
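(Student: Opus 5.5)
The plan is to take the unique maximal local solution $(u,\tau)$ from \autoref{prop-local solution maximal} and show that $\tau=\infty$ $\mathbb{P}$-a.s. Uniqueness then follows from \autoref{thm-local unique}. By the remark after \autoref{prop-tau_m}, it suffices to show that for every fixed $T>0$
\[
\lim_{m\to\infty}\mathbb{P}(\bar\tau_m\le T)=0 .
\]
Since $\rA$ is an isomorphism on a Poincar\'e domain, I will use \eqref{eqn-Poincar\'e} and \eqref{eqn-Poincar\'e-1}, which give a constant $K>0$ with
\[
\vert u\vert_{X_t}^2\le K\Big(\sup_{s\le t}\vert \rA^{\frac12}u(s)\vert_\rH^2+\int_0^t\vert\rA u(s)\vert_\rH^2\,ds\Big)=K\vert u\vert_{X_t^0}^2 .
\]
This reduces everything to the two quantities controlled in \autoref{lem-prob}.

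The key step is the inclusion of events. On $\{\bar\tau_m\le T\}$, provided $\bar\tau_m<\tau$, continuity of $t\mapsto\vert u\vert_{X_t}$ gives $\vert u\vert_{X_{T\wedge\bar\tau_m}}\ge m$, so $\vert u\vert_{X^0_{T\wedge\bar\tau_m}}^2\ge m^2/K$. Hence either
\[
\sup_{t\le T\wedge\bar\tau_m}\vert\rA^{\frac12}u(t)\vert_\rH^2\ge \frac{m^2}{2K}
\quad\text{or}\quad
\int_0^{T\wedge\bar\tau_m}\vert\rA u\vert_\rH^2\,dt\ge\frac{m^2}{2K}.
\]
The case $\bar\tau_m=\tau\le T$ (the infimum is over the empty set) needs separate handling. \autoref{prop-maximal solution implies blowup} shows that $\limsup_{t\nearrow\tau}\vert u\vert_{X_t}=\infty$ a.s. on $\{\tau<\infty\}$, so a.s. the level $m$ is actually reached before $\tau$ and $\bar\tau_m<\tau$. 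I would write this out carefully, possibly replacing $\bar\tau_m$ by $\bar\tau_m\wedge\bar\tau_{m'}$, but it appears to be only bookkeeping.

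Applying \autoref{lem-prob} with $R=m/\sqrt{2K}$ (admissible once $m$ is large enough that $R\ge 2^{2/(2-p)}$) then yields
\[
\mathbb{P}(\bar\tau_m\le T)\le \frac{(\hat C^1_T+\hat C^2_T)(2K)^{p/2}}{m^p}\longrightarrow0 .
\]
Here $p\in(0,1)$ is fixed in advance; when $\kappa=2$ we need $(1-p)b^2>C_0$, which is possible by taking $p$ small because $b$ is assumed large. Since the constants $\hat C^i_T$ do not depend on $m$, the limit is zero and $\tau=\infty$ a.s.

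The main obstacle I expect is the rigour of \autoref{lem-prob} itself rather than the argument above. One point is that \autoref{lem-prob} uses $\mathbb{E}F(u_0)$, which is finite because $F(u_0)\le C(1+\vert u_0\vert_\rV^2)$. Another is the vanishing expectation of the stochastic integral up to $T\wedge\bar\tau_m$, which holds because $\phi$ is bounded there by \autoref{assumption-phi}(ii) and $\vert u\vert_{X_{\bar\tau_m}}\le m$. On a Poincar\'e domain, the bound on $\phi$ for $\rho\le 2R_0$ used in the lemma should come from \autoref{assumption-phi}(ii) rather than \autoref{assumption-phi-1}. I would point this out, and if necessary localize further, using that $\vert u\vert_{X_t}\le m$ for $t\le\bar\tau_m$.
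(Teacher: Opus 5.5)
Your proposal is correct and follows the paper's proof essentially step for step. Both use the norm equivalence $\vert u\vert_{X_t}\le C\vert u\vert_{X_t^0}$ on a Poincar\'e domain, split $\{\bar\tau_m\le T\}$ into the two events controlled by \autoref{lem-prob} with $R^2=m^2/(2C^2)$ to get an $O(m^{-p})$ bound, and conclude via \autoref{prop-tau_m}(ii). Your appeal to \autoref{prop-maximal solution implies blowup} to ensure $\bar\tau_m<\tau$ on $\{\bar\tau_m\le T\}$ makes explicit a point the paper passes over.

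One caution: your fallback of localizing with $\vert u\vert_{X_t}\le m$ to bound $\phi$ for $\rho\le 2R_0$ would make $\hat C^i_T$ depend on $m$, so it must be avoided. It is also unnecessary. \autoref{lem-g-boundedness on balls} gives $\vert\phi(v)\vert\le C_6(R)$ whenever $0<\vert v\vert_\rV\le R$, and on a Poincar\'e domain $\vert v\vert_\rV\le C_4\vert\rA^{\frac12}v\vert_\rH$, so the bound depends only on $R_0$.
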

\begin{proof}
Recall that $\vert \cdot\vert_{X_T}$ is equivalent to $\vert \cdot\vert_{X_T^0}$ defined in \eqref{eq-X_T^0}. Hence, there exists $C>0$ such that
\begin{align}
\vert u\vert_{X_{T\wedge\bar{\tau}_m}}\leq
C\vert u\vert_{X_{T\wedge\bar{\tau}_m}^0}.
\end{align}
Let us choose and fix $T>0$. We infer there exists $C:=C(T)>0$ such that
\begin{align}\label{eqn-tau_m<T-prob-1}
\mathbb{P}\left(\bar{\tau}_m \leq  T \right)&\leq \mathbb{P}\left(\vert u\vert_{X_{T\wedge\bar{\tau}_m}}\geq m \right)\leq \mathbb{P}\left(\vert u\vert_{X_{T\wedge\bar{\tau}_m}^0}\geq \frac{m}{C} \right)\\ &=\mathbb{P}\left(\sup _{t \in[0, T\wedge \bar{\tau}_m]}\vert \rA^\frac{1}{2}u(t)\vert_\rH^2 +\int_0^{T\wedge \bar{\tau}_m}\vert \rA u(t)\vert_\rH^2 \, d t \geq \frac{m^2}{C^2} \right)\nonumber\\
&\leq \mathbb{P}\left(\sup _{t \in[0, T\wedge \bar{\tau}_m]}\vert \rA^\frac{1}{2}u(t)\vert_\rH^2 \geq \frac{m^2}{2C^2} \right)+\mathbb{P}\left(\int_0^{T\wedge \bar{\tau}_m}\vert \rA u(t)\vert_\rH^2 \, d t \geq \frac{m^2}{2C^2} \right) \nonumber,
\end{align}
therefore, in view of \autoref{lem-prob},
\begin{align}\label{eqn-tau_m<T-prob-1}
\mathbb{P}\left(\bar{\tau}_m\leq T \right)&\leq \hat{C}^1_T (\frac{m^2}{2C^2})^{-\frac p2}
+ \hat{C}^2_T (\frac{m^2}{2C^2})^{-\frac p2}
\nonumber\\
&= (\hat{C}^1_T+\hat{C}^2_T)(\sqrt{2}C)^pm^{-p}
\to 0,\mbox{ as } m\to \infty.\nonumber
\end{align}
Thus, in view of \eqref{eqn-tau-tau_m-2},
\begin{equation*}
\mathbb{P}(\tau \leq  T) = \lim_{m \to \infty} \mathbb{P}\left(\bar{\tau}_m\leq  T\right)=0.
\end{equation*}
Since the above holds for every $T \geq 0$, we infer that
\begin{align}
\mathbb{P}(\tau = +\infty) = 1.
\end{align}
The proof of \autoref{thm-main-bounded} is complete.
\end{proof}

\subsection{Full space}\label{subsec-Full space-global}
In this subsection, we assume that $\dom=\mathbb{R}^3$ and \autoref{assumption-phi-1} is satisfied. Now we can prove the second case of our main result \autoref{thm-main}.
\begin{theorem}\label{thm-main-full space}
Assume that $u_0$ is an $\mathscr{F}_0$-measurable $\rV$-valued random variable with $\mathbb{E}\vert u_0\vert_\rV^2 < \infty$ and the function $\phi$ satisfies \autoref{assumption-phi} and \autoref{assumption-phi-1}, then there exists a unique global solution to the main problem \eqref{eqn-NS03}.
\end{theorem}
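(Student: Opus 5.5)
The plan is to reduce to the argument of \autoref{thm-main-bounded}. The only missing ingredient is control of the $\rH$-norm, which on $\mathbb{R}^3$ is not dominated by $\vert \rA^{\frac12}u\vert_\rH$. Indeed, since
\[
\vert u\vert_\rV^2=\vert u\vert_\rH^2+\vert \rA^{\frac12}u\vert_\rH^2 \quad\text{and}\quad \vert u\vert_{D(\rA)}^2=\vert u\vert_\rH^2+\vert \rA u\vert_\rH^2,
\]
we have
\[
\vert u\vert_{X_t}^2\leq \vert u\vert_{X_t^0}^2+(1+t)\sup_{s\leq t}\vert u(s)\vert_\rH^2 .
\]
By \autoref{prop-tau_m} it suffices to show $\lim_{m\to\infty}\mathbb{P}(\bar\tau_m\leq T)=0$ for each fixed $T>0$. \autoref{lem-prob} already controls the $X^0$-part. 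It is stated for a general domain and uses only the Agmon inequality (\autoref{lem-Agmon}(ii) covers $\mathbb{R}^3$), \autoref{prop-Assumption1-1} and Assumptions \ref{assumption-phi}, \ref{assumption-phi-1}. So what remains is a bound on the $\rH$-norm that holds on the event where the $X^0$-norm stays bounded.

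For this I introduce, for $K>0$, the stopping time
\[
\sigma_K:=\inf\{t\in[0,\tau):\vert u\vert_{X_t^0}\geq K\}.
\]
On $[0,\sigma_K\wedge\bar\tau_m]$ the path lies in $X_t$, so \autoref{assumption-phi-1}, applied to the stopped path, gives $\vert\phi(u(t))\vert^2\leq\Gamma(K^2)$. Next I apply It\^o's formula to $\vert u\vert_\rH^2$, using \eqref{eq-local strong solution} and $\langle B(u),u\rangle=0$:
\[
d\vert u\vert_\rH^2=\big(-2\vert\rA^{\frac12}u\vert_\rH^2+\vert\phi(u)\vert^2\vert u\vert_\rH^2\big)\,dt+2\phi(u)\vert u\vert_\rH^2\,dW .
\]
Stopping at $T\wedge\sigma_K\wedge\bar\tau_m$ makes the stochastic integral a true martingale, and Gronwall then gives
\[
\mathbb{E}\vert u(t\wedge\sigma_K\wedge\bar\tau_m)\vert_\rH^2\leq e^{\Gamma(K^2)T}\,\mathbb{E}\vert u_0\vert_\rH^2 .
\]
The Burkholder--Davis--Gundy inequality combined with Young's inequality upgrades this to
\[
\mathbb{E}\sup_{t\leq T\wedge\sigma_K\wedge\bar\tau_m}\vert u(t)\vert_\rH^2\leq C(T,K)\,\mathbb{E}\vert u_0\vert_\rH^2 ,
\]
with $C(T,K)$ independent of $m$.

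To conclude, I split the event:
\[
\mathbb{P}(\bar\tau_m\leq T)\leq \mathbb{P}(\sigma_K\leq T\wedge\bar\tau_m)+\mathbb{P}\Big(\vert u\vert_{X_{T\wedge\bar\tau_m}}\geq m,\ \sigma_K>T\wedge\bar\tau_m\Big).
\]
The first term is bounded via \autoref{lem-prob}(i),(ii) with $R\simeq K/\sqrt2$, which gives at most $C_TK^{-p}$ uniformly in $m$. On the second event $\vert u\vert_{X^0}\leq K$. Hence $\vert u\vert_{X_{T\wedge\bar\tau_m}}\geq m$ forces
\[
(1+T)\sup_{t\leq T\wedge\sigma_K\wedge\bar\tau_m}\vert u(t)\vert_\rH^2\geq m^2-K^2 ,
\]
and Chebyshev together with the previous bound gives at most $(1+T)C(T,K)\,\mathbb{E}\vert u_0\vert_\rH^2/(m^2-K^2)$. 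Letting first $m\to\infty$ and then $K\to\infty$ yields $\lim_m\mathbb{P}(\bar\tau_m\leq T)=0$, hence $\tau=\infty$ a.s. Uniqueness is \autoref{thm-local unique}.

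The main obstacle is the $\rH$-estimate. The It\^o correction $\vert\phi(u)\vert^2\vert u\vert_\rH^2$ has the wrong sign, and there is no Poincar\'e inequality to absorb it. This is exactly why \autoref{assumption-phi-1} is needed: it makes the growth rate bounded on $\{\sigma_K>t\}$. The delicate points are that the constant $e^{\Gamma(K^2)T}$ depends on $K$, so the order of limits ($m$ first, then $K$) matters. One must also check that the sup-over-the-path form of \autoref{assumption-phi-1} legitimately applies to the stopped path on $[0,\sigma_K\wedge\bar\tau_m]$. I would do this by applying it on $X_{t}$ for each $t\leq\sigma_K\wedge\bar\tau_m$, using the monotonicity of $\Gamma$.
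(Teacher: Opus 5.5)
Your proof is correct. Its outer structure matches the paper's: the bound $\vert u\vert_{X_t}^2\leq \vert u\vert_{X_t^0}^2+(1+t)\sup_{s\leq t}\vert u(s)\vert_\rH^2$, \autoref{lem-prob} for the $X^0$-part, and \autoref{prop-tau_m} to pass from $\bar\tau_m$ to $\tau$. The genuine difference is how you control the missing $\rH$-norm.

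The paper applies It\^o's formula to the logarithmic Lyapunov function $\log(1+\Vert u\Vert_{L^2}^2)$. This gives the pathwise bound
\[
\log(1+\Vert u(t)\Vert_{L^2}^2)\leq \log(1+\Vert u_0\Vert_{L^2}^2)+\int_0^t\vert\phi(u(s))\vert^2\,ds+\log\hat Z_t ,
\]
where $\hat Z$ is an exponential supermartingale. The paper then uses the maximal inequality for nonnegative supermartingales with an $\mathscr{F}_0$-measurable level (\autoref{prop-Z_t}). Via \autoref{assumption-phi-1} it converts $\int_0^t\vert\phi\vert^2\,ds$ into a tail of $\vert u\vert_{X^0}$ at level $\Gamma^{-1}\bigl(\frac{1}{2T}\log(1+R)\bigr)$, which \autoref{lem-prob} controls. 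So only a single limit $m\to\infty$ is needed.

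You instead localise with $\sigma_K$, so that $\vert\phi\vert^2\leq\Gamma(K^2)$ deterministically on the stochastic interval. The wrong-signed It\^o correction then becomes a bounded linear growth rate. Plain Gronwall in expectation, the Burkholder--Davis--Gundy inequality and Chebyshev finish the job, at the price of the double limit ($m\to\infty$, then $K\to\infty$).

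Quantitatively the two arguments make the same trade-off: an $\rH$-threshold $R$ is paired with an $X^0$-threshold $K$ such that $\log R$ is of order $T\Gamma(K^2)$. Your route uses only standard moment tools and needs neither the log-Lyapunov function nor the appendix martingale inequality. The paper's route gives a pathwise estimate and an explicit bound in $m$ without an auxiliary localisation parameter.
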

\begin{proof}
\textbf{Step 1} We will show that
\begin{align}\label{eqn-8.6}
\mathbb{P}\left(\vert u\vert_{X_{T\wedge \bar{\tau}_m}^0}^2 \geq  \frac{m^2}{2} \right)\to 0,\;\;\mbox{ as }m\to \infty.
\end{align}

In view of the definition of $\vert \cdot\vert_{X_{T\wedge\bar{\tau}_m}} $ and $\vert \cdot\vert_{X_{T\wedge \bar{\tau}_m}^0}$, see \eqref{norm X_T} and \eqref{eq-X_T^0}, we have
\begin{align*}
\vert u\vert_{X_{T\wedge\bar{\tau}_m}}^2
&=\sup _{t \in[0, T\wedge \bar{\tau}_m]}[\vert \rA^\frac{1}{2}u(t)\vert_\rH^2+\left\Vert u(t)\right\Vert_{L^2}^2]+\int_0^{T\wedge \bar{\tau}_m}\Vert u(t)\Vert_{L^2}^2 \, d t   +\int_0^{T\wedge \bar{\tau}_m}\vert \rA u(t)\vert_\rH^2 \, d t\\
&\leq (T+1)\sup _{t \in[0, T\wedge \bar{\tau}_m]}\left\Vert u(t)\right\Vert_{L^2}^2+\sup _{t \in[0, T\wedge \bar{\tau}_m]}\vert \rA^\frac{1}{2}u(t)\vert_\rH^2+\int_0^{T\wedge \bar{\tau}_m}\vert \rA u(t)\vert_\rH^2 \, d t\\
&=(T+1)\sup _{t \in[0, T\wedge \bar{\tau}_m]}\left\Vert u(t)\right\Vert_{L^2}^2+\vert u\vert_{X_{T\wedge \bar{\tau}_m}^0}^2.
\end{align*}
Therefore,
\begin{equation}\label{eqn-tau_m<T-prob-11}
\begin{aligned}
\mathbb{P}\left(\bar{\tau}_m < T \right)&\leq \mathbb{P}\left(\vert u\vert_{X_{T\wedge\bar{\tau}_m}}\geq m \right)\\ &\leq \mathbb{P}\left((T+1)\sup _{t \in[0, T\wedge \bar{\tau}_m]}\left\Vert u(t)\right\Vert_{L^2}^2+\vert u\vert_{X_{T\wedge \bar{\tau}_m}^0}^2\geq m^2 \right)\\
&\leq \mathbb{P}\left(\sup _{t \in[0, T\wedge \bar{\tau}_m]}\left\Vert u(t)\right\Vert_{L^2}^2 \geq \frac{m^2}{2(T+1)} \right)+\mathbb{P}\left(\vert u\vert_{X_{T\wedge \bar{\tau}_m}^0}^2 \geq  \frac{m^2}{2} \right) .
\end{aligned}
\end{equation}
In view of definition of $\vert u\vert_{X_{T\wedge \bar{\tau}_m}^0}$, see \eqref{eq-X_T^0} and  \autoref{lem-prob}, we infer that there exists $C=C(T)>0$ such that for every $R>0$
\begin{align}\label{eqn-8.47}
\mathbb{P}\left(\vert u\vert_{X_{T\wedge \bar{\tau}_m}^0}^2 \geq  R^2 \right) &=\mathbb{P}\left(\sup _{t \in[0, T\wedge \bar{\tau}_m]}\vert \rA^\frac{1}{2}u(t)\vert_\rH^2 +\int_0^{T\wedge \bar{\tau}_m}\vert \rA u(t)\vert_\rH^2 \, d t \geq R^2 \right)\nonumber\\
&\leq \mathbb{P}\left(\sup _{t \in[0, T\wedge \bar{\tau}_m]}\vert \rA^\frac{1}{2}u(t)\vert_\rH^2 \geq \frac{R^2}{2} \right)+\mathbb{P}\left(\int_0^{T\wedge \bar{\tau}_m}\vert \rA u(t)\vert_\rH^2 \, d t \geq \frac{R^2}{2} \right) \nonumber\\
&\leq (\hat{C}^1_T+\hat{C}^2_T)(\sqrt{2})^pR^{-p},
\end{align}
what  implies that \eqref{eqn-8.6} as claimed.

\textbf{Step 2} In the following, our aim is to prove that
\begin{equation}\label{eqn-8.48}
\mathbb{P}\left(\sup _{t \in[0, T\wedge \bar{\tau}_m]}\left\Vert u(t)\right\Vert_{L^2}^2 \geq \frac{m^2}{2(T+1)} \right) \to 0,\;\;\mbox{ as } m\to \infty.
\end{equation}
For this we will estimate the LHS of \eqref{eqn-8.48}.

In view of \eqref{eq-local strong solution}, by the It\^o formula for the function  $\vert \cdot\vert_\rH^2$, see \cite[Lemma 1.4]{Pardoux_1979}, we infer that for every $t>0$,
\begin{equation}
\begin{aligned}
\left\vert u(t\wedge \bar{\tau}_m)\right\vert_\rH^2= & \left\vert u(0)\right\vert_\rH^2+\int_0^{t\wedge \bar{\tau}_m }\left(2{}_{\rV^\prime}\left\langle
-[\rA u(s)+B(u(s))], u(s)\right\rangle_\rV+\vert \phi(u(s))\vert^2 \vert u(s)\vert_\rH ^2\right)\, d s \\
& +2 \int_0^{t\wedge \bar{\tau}_m } \phi(u(s))\vert u(s)\vert_\rH ^2\,d W(s).
\end{aligned}
\end{equation}
Since the  operator $\rA$ is self-adjoint, by equality \eqref{eq-<B(u),u>},  we infer that
\begin{align}
{}_{\rV^\prime}\left\langle
\rA u+B(u), u\right\rangle_\rV={}_{\rV^\prime}\left\langle
\rA u, u\right\rangle_\rV=\vert \rA^\frac{1}{2}u\vert_\rH^2.
\end{align}
Therefore we deduce that for $t \geq 0$,
\begin{equation}
\begin{aligned}
\left\vert u(t\wedge \bar{\tau}_m)\right\vert_\rH^2
= & \left\vert u(0)\right\vert_\rH^2+\int_0^{t\wedge \bar{\tau}_m }-2\vert \rA^\frac{1}{2}u(s)\vert_\rH^2 +\vert \phi(u(s))\vert^2\vert u(s)\vert_\rH ^2\, d s \\
& +2 \int_0^{t\wedge \bar{\tau}_m }\phi(u(s))\vert u(s)\vert_\rH ^2\,d W(s).
\end{aligned}
\end{equation}
We now use the logarithmic Lyapunov function below to estimate the
missing $L^2$-part.
By using the Itô formula for the logarithmic Lyapunov function,
\begin{equation}\label{eq-Lyapunov-1}
F^\prime(u):=\log (1+\Vert u\Vert_{L^2}^2), \;\; u \in D(\rA^\frac12),
\end{equation}
Therefore, for $t\geq 0$
\begin{equation}\label{eqn-Lyapunov-H norm-1}
\begin{aligned}
d\log \left(1+\left\Vert u(s)\right\Vert_{L^2}^2\right)=&-\frac{2\vert \rA^\frac{1}{2}u\vert_\rH^2}{1+\Vert u\Vert_{L^2}^2}\, d s+  \frac{ \vert \phi(u)\vert^2\Vert u\Vert_{L^2}^2}{1+\Vert u\Vert_{L^2}^2}\, d s-\frac{ 2\vert \phi(u)\vert^2\Vert u\Vert_{L^2}^4}{\left(1+\Vert u\Vert_{L^2}^2\right)^2}\, d s\\
&+ \frac{2\phi(u) \Vert u\Vert_{L^2}^2}{1+\Vert u\Vert_{L^2}^2} \, d W(s)\\
\leq& \vert \phi(u)\vert^2\, d s-\frac{ 2\vert \phi(u)\vert^2\Vert u\Vert_{L^2}^4}{\left(1+\Vert u\Vert_{L^2}^2\right)^2}\, d s+ \frac{2\phi(u) \Vert u\Vert_{L^2}^2}{1+\Vert u\Vert_{L^2}^2} \, d W(s).
\end{aligned}
\end{equation}
Define an $\mathbb{R}$-valued  process $\hat{\rM}=(\hat{\rM}_t: t\geq 0)$ by
\[
\hat{\rM}_t=2 \int_0^{t}\frac{\phi(u) \Vert u(s)\Vert_{L^2}^2}{1+\Vert u(s)\Vert_{L^2}^2} \, d W(s).
\]
Note that for every $t\geq 0$, by \autoref{assumption-phi}
\begin{equation}\label{eqn-EM_t1-1a}
\begin{aligned}
&\mathbb{E}  \int_0^{t\wedge \bar{\tau}_m} \frac{ 4\vert \phi(u)\vert^2\Vert u(s)\Vert_{L^2}^4 }{(1+\Vert u(s)\Vert_{L^2}^2)^2} \, ds
\leq 4\mathbb{E}  \int_0^{t  \wedge \bar{\tau}_m}   \vert \phi(u)\vert^2\, ds  <\infty.
\end{aligned}
\end{equation}
It follows that the stopped processes $\hat{\rM}_{t \wedge \bar{\tau}_m}$ are martingales. Moreover, by the properties of the Itô integral, the quadratic variation $\langle \hat{\rM}\rangle_t$ satisfies
\begin{equation}
\begin{aligned}\label{eq-M_t11a}
\langle \hat{\rM} \rangle_t &= 4\int_0^t \frac{ \vert \phi(u)\vert^2\Vert u(s)\Vert_{L^2}^4 }{(1+\Vert u(s)\Vert_{L^2}^2)^2} \, ds.
\end{aligned}
\end{equation}
Then inequality \eqref{eqn-Lyapunov-H norm-1} implies
\begin{equation}\label{eqn-Lyapunov-H norm-1a}
\begin{aligned}
&\log \left(1+\left\Vert u(t\wedge \bar{\tau}_m)\right\Vert_{L^2}^2\right)\\
&\leq \log \left(1+\left\Vert u_0\right\Vert_{L^2}^2\right)+  \int_0^{t\wedge \bar{\tau}_m} \vert \phi(u)\vert^2\, d s+\hat{\rM}_{t\wedge \bar{\tau}_m}- \frac{1}{2}\langle \hat{\rM} \rangle_{t\wedge \bar{\tau}_m}.
\end{aligned}
\end{equation}
Define the process $\hat{Z}$ by
\[\hat{Z}_t:=e^{\hat{\rM}_t- \frac{1}{2}\langle \hat{\rM} \rangle_t}.\]
Note that for every $R>0$
\begin{align*}
& \mathbb{P}\left(\sup _{t \in[0, T\wedge \bar{\tau}_m]}\left\Vert u(t)\right\Vert_{L^2}^2 \geq R \right)=\mathbb{P}\left(\sup _{t \in[0, T\wedge \bar{\tau}_m]}\log(1+\left\Vert u(t)\right\Vert_{L^2}^2) \geq \log(1+R) \right)\nonumber\\
\leq &\mathbb{P}\left(\sup _{t \in[0, T\wedge \bar{\tau}_m]}\left[  \int_0^{t\wedge \bar{\tau}_m} \vert \phi(u)\vert^2\, d s+\log\hat{Z}_{t\wedge \bar{\tau}_m}+\log(1+\Vert u_0\Vert_{L^2}^2)\right]\geq \log( 1+R) \right) \nonumber\\
\leq & \mathbb{P}\left(\sup _{t \in[0, T\wedge \bar{\tau}_m]}\int_0^{t} \vert \phi(u)\vert^2\, d s\geq \frac{1}{2} \log( 1+R)\right)\nonumber\\ &+\mathbb{P}\left(\sup _{t \in[0, T\wedge \bar{\tau}_m]}\log\hat{Z}_t \geq  \log\frac{(1+R)^{1/2}}{1+\left\Vert u_0\right\Vert_{L^2}^2} \right).
\end{align*}
By \autoref{prop-Z_t} we infer that
\begin{align}\label{eq-u L2 norm-prob-1}
& \mathbb{P}\left(\sup _{t \in[0, T\wedge \bar{\tau}_m]}\left\Vert u(t)\right\Vert_{L^2}^2 \geq R \right)\\
\leq & \mathbb{P}\left( \sup_{t \in [0, T \wedge \bar{\tau}_m]} \vert \phi(u)\vert^2
\geq  \frac{1}{2T} \log(1+R) \right)+\mathbb{P}\left(\sup _{t \in[0, T\wedge \bar{\tau}_m]}\hat{Z}_t \geq \frac{(1+R)^{1/2}}{1+\left\Vert u_0\right\Vert_{L^2}^2} \right)\nonumber\\
\leq & \mathbb{P}\left( \sup_{t \in [0, T \wedge \bar{\tau}_m]} \vert \phi(u)\vert^2
\geq  \frac{1}{2T} \log(1+R) \right)+\frac{1+\mathbb{E}\left\Vert u_0\right\Vert_{L^2}^2}{(1+R)^{1/2}}.
\end{align}
By \autoref{assumption-phi-1}, recall that $\Gamma$ is an increasing function such that
\begin{equation}\label{eqn-lim fx=infty-1}
\lim_{x\to \infty} \Gamma(x)= \infty.
\end{equation}
Therefore, we infer that
\begin{equation}\label{eqn-8.50}
\begin{aligned}
\mathbb{P}\left( \sup_{t \in [0, T \wedge \bar{\tau}_m]} \vert \phi(u)\vert^2
\geq  \frac{1}{2T} \log(1+R) \right)\leq &\mathbb{P}\left( \Gamma(\vert u\vert_{X_{T\wedge \bar{\tau}_m}^0}^2)
\geq  \frac{1}{2T} \log(1+R) \right)\\
=& \mathbb{P}\left(\vert u\vert_{X_{T\wedge \bar{\tau}_m}^0}^2
\geq \Gamma^{-1} (\frac{1}{2T} \log(1+R) )\right).
\end{aligned}
\end{equation}\
Thus, in view of \eqref{eqn-8.47}
\begin{align}\label{eqn-8.17}
\mathbb{P}\left(\vert u\vert_{X_{T\wedge \bar{\tau}_m}^0}^2
\geq \Gamma^{-1} (\frac{1}{2T} \log(1+\frac{m}{2(T+1)}) )\right)\to 0,\;\;\mbox{ as } m\to \infty.
\end{align}
Hence, by \eqref{eq-u L2 norm-prob-1}, \eqref{eqn-8.50} and \eqref{eqn-8.17} we deduce
\begin{equation*}\label{eqn-8.48-2}
\begin{aligned}
&\mathbb{P}\left(\sup _{t \in[0, T\wedge \bar{\tau}_m]}\left\Vert u(t)\right\Vert_{L^2}^2 \geq \frac{m}{2(T+1)} \right) \\
\leq & \mathbb{P}\left(\vert u\vert_{X_{T\wedge \bar{\tau}_m}^0}^2
\geq \Gamma^{-1} (\frac{1}{2T} \log(1+\frac{m}{2(T+1)}) )\right)+\frac{1+\left\Vert u_0\right\Vert_{L^2}^2}{(1+\frac{m}{2(T+1)})^{1/2}}\to 0,\;\;\mbox{ as } m\to \infty.
\end{aligned}
\end{equation*}
what implies \eqref{eqn-8.48}.

\textbf{Step 3} By \eqref{eqn-8.6} and \eqref{eqn-8.48}, we infer
\begin{equation}\label{eqn-tau_m<T-prob-111}
\begin{aligned}
\mathbb{P}\left(\bar{\tau}_m < T \right)\to 0,\mbox{ as } m\to \infty.
\end{aligned}
\end{equation}
Therefore, in view of \eqref{eqn-tau-tau_m-2},
\begin{equation*}
\mathbb{P}(\tau < T) = \lim_{m \to \infty} \mathbb{P}\left(\bar{\tau}_m< T\right)=0.
\end{equation*}
Since the above holds for every $T \geq 0$, we infer that
\begin{align}
\mathbb{P}(\tau = +\infty) = 1.
\end{align}
The proof of \autoref{thm-main-full space} is complete.
\end{proof}

\section{Examples}\label{Examples}
In this section, we provide examples of noises satisfying our assumptions. In particular, the framework considered by Hong, Li, and Liu \cite{Hong+Li+Liu_2024} is included as a special case of our setting. Consequently, we extend the corresponding result in their paper from bounded domains to unbounded domains.
\begin{example}\label{example-phi}
Assume that  $\kappa>2$, $c_1> 0$ and $c_0\geq 0$. Then the  following function
\begin{equation}\label{eqn-phi-ex}
    \phi: \rV \ni u \mapsto  c_1\vert \rA^\frac{1}{2}u\vert_\rH^\kappa+c_0 \in \mathbb{R}
\end{equation}
satisfies Assumption
\ref{assumption-phi} and \ref{assumption-phi-1}. Moreover, $g(u)=\phi(u)u$ satisfies Assumption \ref{ass-g-loc Lip}.
\end{example}
\begin{proof}
Obviously, $\phi$ satisfies (i) in \autoref{assumption-phi}. To continue let us formulate the following simple but useful lemma.
\begin{lemma}\label{lem-simple}
Assume that  $\kappa> 2$, $c_0\geq 0$ and $c_1>0$.     Then  there exists $\delta \in (0,1)$ and $C>0$ such that
\begin{equation}\label{eqn-A3a-1}
(c_1x^\kappa + c_0)^2x^2
\leq \delta c_1^2 x^{2\kappa+4}+2\delta c_1c_0 x^{\kappa+4}+c_0^2  x^{2}+C, \;\; \mbox{ for all } x \geq 0.
\end{equation}
\end{lemma}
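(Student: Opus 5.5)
Expanding the left-hand side, the inequality \eqref{eqn-A3a-1} reads
\[
c_1^2x^{2\kappa+2}+2c_1c_0x^{\kappa+2}+c_0^2x^2\le \delta c_1^2x^{2\kappa+4}+2\delta c_1c_0x^{\kappa+4}+c_0^2x^2+C .
\]
The terms $c_0^2x^2$ cancel. It therefore suffices to find $\delta\in(0,1)$ and $C>0$ such that
\[
c_1^2x^{2\kappa+2}\le \delta c_1^2x^{2\kappa+4}+C_1
\quad\text{and}\quad
2c_1c_0x^{\kappa+2}\le 2\delta c_1c_0x^{\kappa+4}+C_2,
\qquad x\ge0,
\]
and then to put $C:=C_1+C_2$. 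Note that $\kappa>2$ plays no role here; only positivity of the exponents is used.

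Fix any $\delta\in(0,1)$, say $\delta=\tfrac12$. For the first bound, divide by $c_1^2>0$; it becomes $x^{2\kappa+2}\le \delta x^{2\kappa+4}+C_1'$. Split into two regimes.
\begin{itemize}
\item If $x\ge \delta^{-1/2}$, then $\delta x^2\ge1$, hence $x^{2\kappa+2}\le \delta x^{2\kappa+4}$.
\item If $x\le \delta^{-1/2}$, then $x^{2\kappa+2}\le \delta^{-(\kappa+1)}=:C_1'$.
\end{itemize}
So the first bound holds with $C_1=c_1^2C_1'$.

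For the second bound, if $c_0=0$ there is nothing to prove. Otherwise divide by $2c_1c_0>0$, and the same split at $x=\delta^{-1/2}$ gives $x^{\kappa+2}\le\delta x^{\kappa+4}+\delta^{-(\kappa+2)/2}$. Hence the second bound holds with $C_2=2c_1c_0\,\delta^{-(\kappa+2)/2}$.

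Alternatively, one can apply Young's inequality to $x^{2\kappa+2}=x^{2\kappa+2}\cdot1$ with exponents $\frac{2\kappa+4}{2\kappa+2}$ and $\kappa+2$, which yields an explicit $C_1$. The elementary split above is simpler and suffices.

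Adding the two bounds gives \eqref{eqn-A3a-1}. There is no real obstacle; the only care needed is that $\delta$ is chosen strictly below $1$, and the constant $C$ is allowed to blow up as $\delta\to0$.
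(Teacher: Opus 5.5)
Your proof is correct. The overall structure matches the paper's: expand, keep $c_0^2x^2$ on both sides, and absorb each of the two remaining terms into a higher power plus a constant. The difference is the tool. The paper uses Young's inequality instead of your case split:
\begin{itemize}
\item with conjugate exponents $\frac{\kappa+2}{\kappa+1}$ and $\kappa+2$ it gets $x^{2\kappa+2}\le\frac{\kappa+1}{\kappa+2}x^{2\kappa+4}+\frac{1}{\kappa+2}$;
\item with $\frac{\kappa+4}{\kappa+2}$ and $\frac{\kappa+4}{2}$ it gets $x^{\kappa+2}\le\frac{\kappa+2}{\kappa+4}x^{\kappa+4}+\frac{2}{\kappa+4}$.
\end{itemize}
This forces the specific choice $\delta=\max\{\frac{\kappa+1}{\kappa+2},\frac{\kappa+2}{\kappa+4}\}$, which is close to $1$.

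Your split at $x=\delta^{-1/2}$ is more elementary and shows a bit more: every $\delta\in(0,1)$ works, at the price of a constant of order $\delta^{-(\kappa+1)}$. The paper's route gives one explicit $\kappa$-dependent $\delta$ without a case distinction.

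One small point in your favour concerns the constants. Yours, $C_1=c_1^2\delta^{-(\kappa+1)}$ and $C_2=2c_1c_0\delta^{-(\kappa+2)/2}$, correctly carry the prefactors. The paper's final display drops them and states $C=\frac{1}{\kappa+2}+\frac{2}{\kappa+4}$, where it should read $\frac{c_1^2}{\kappa+2}+\frac{4c_1c_0}{\kappa+4}$. This is harmless, since only the existence of $C$ is claimed.
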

\begin{proof}[Proof of Lemma \ref{lem-simple}]
Since $\kappa> 2$, by the classical Young inequality applied twice, we have for all $ x\geq 0$,
\begin{align}\label{eqn-A3a-11}
(c_1x^\kappa + c_0)^2x^2
&=
c_1^2 x^{2\kappa+2}+2c_1c_0 x^{\kappa+2}+c_0^2  x^{2}
\\
&\leq c_1^2(\frac{\kappa+1}{\kappa+2} x^{2\kappa+4}+\frac{1}{\kappa+2})+2 c_1c_0 (\frac{\kappa+2}{\kappa+4} x^{\kappa+4}+\frac{2}{\kappa+4})
+c_0^2  x^{2}\nonumber\\
&=c_1^2\frac{\kappa+1}{\kappa+2} x^{2\kappa+4}+2 c_1c_0 \frac{\kappa+2}{\kappa+4} x^{\kappa+4}+c_0^2  x^{2}+\frac{1}{\kappa+2}+\frac{2}{\kappa+4}.
\nonumber\\
&\leq \delta c_1^2 x^{2\kappa+4}+2\delta c_1c_0 x^{\kappa+4}+c_0^2  x^{2}+C,\nonumber
\end{align}
where
\begin{align}\label{eq-eta}
\delta:=\max\{\frac{\kappa+1}{\kappa+2},\frac{\kappa+2}{\kappa+4}\} \mbox{ and } C:=\frac{1}{\kappa+2}+\frac{2}{\kappa+4}.
\end{align}
Note that $\delta\in (0,1)$.
The proof of \autoref{lem-simple} is complete.
\end{proof}

Let us choose and fix $T>0$ and $R>0$, assume that $u \in X_T$ satisfies $\vert u \vert_{X_T} \leq R$. In view of \eqref{eqn-A1/2u X_t}, for every $t\in[0,T]$
\begin{align*}
\vert \rA^\frac{1}{2}u(t)\vert_\rH\leq \vert u\vert_{X_T}\leq R.
\end{align*}
Hence, we infer that
\begin{equation}\label{eqn-A3-1}
\begin{aligned}
\sup_{t\in [0,T]}\vert \phi(u(t))\vert &=\sup_{t\in [0,T]}c_1\vert \rA^\frac{1}{2}u\vert_\rH^\kappa+c_0\leq c_1R^\kappa+c_0,
\end{aligned}
\end{equation}
which implies condition (ii) in \autoref{assumption-phi} holds.

Let us define an auxiliary function
\[
\Gamma: [0,\infty) \ni x\mapsto (c_1^2+1)(x^\kappa+c_0^2).
\]
Obviously, this is an increasing function  satisfying condition \eqref{eqn-lim fx=infty}.

Let us choose and fix $T>0$. Note that for every $u\in X_T$, by the Young inequality and \eqref{eq-X_T^0}, we infer that
\begin{equation}
\begin{aligned}
\sup_{t\in [0,T]}\vert\phi(u(t))\vert^2&=\sup_{t\in [0,T]}\left[c_1^2\vert \rA^\frac{1}{2}u(t)\vert_\rH^{2\kappa}+2c_1c_0\vert \rA^\frac{1}{2}u(t)\vert_\rH^\kappa+c_0^2\right]\\
&\leq \sup_{t\in [0,T]}\left[(c_1^2+1)\vert \rA^\frac{1}{2}u(t)\vert_\rH^{2\kappa}+(c_1^2+1)c_0^2\right]\\
&= (c_1^2+1)\left[\sup_{t\in [0,T]}\vert \rA^\frac{1}{2}u(t)\vert_\rH^{2\kappa}+c_0^2\right]\\
&\leq (c_1^2+1)(\vert u\vert_{X_T^0}^{2\kappa}+c_0^2)= \Gamma(\vert u\vert_{X_T^0}^2).
\end{aligned}
\end{equation}
Therefore, \autoref{assumption-phi-1} holds.

Let us choose and fix $u,\rv\in \rV$ satisfies $\vert  u\vert_\rV\leq R$ and $\vert\rv\vert_\rV\leq R$. Therefore,
\begin{align}\label{eqn-3.12}
\max\{\vert \rA^\frac{1}{2}u(t)\vert_\rH, \vert \rA^\frac{1}{2}\rv(t)\vert_\rH\}\leq R.
\end{align}
Note that for $0\leq x_1<x_2$, by the Fundamental Theorem of calculus, since $\kappa>1$,
\begin{equation}\label{eqn-3.13}
\begin{aligned}
x_2^\kappa -x_1^\kappa =\int_{x_1}^{x_2}\kappa s^{\kappa-1}\,ds \leq \kappa x_2^{\kappa-1}(x_2-x_1).
\end{aligned}
\end{equation}
By inequalities \eqref{eqn-3.13}, \eqref{eqn-3.12} and the triangle inequality we infer that for every $t\in[0,T]$
\begin{align}\label{eqn-3.10-1}
\Big\vert\vert \rA^\frac{1}{2}u(t)\vert_\rH^\kappa-\vert \rA^\frac{1}{2}\rv(t)\vert_\rH^\kappa\Big\vert
 &\leq \kappa (\max\{\vert \rA^\frac{1}{2}u(t)\vert_\rH, \vert \rA^\frac{1}{2}\rv(t)\vert_\rH\})^{\kappa-1} \Big\vert \vert \rA^\frac{1}{2}u(t)\vert_\rH-\vert \rA^\frac{1}{2}\rv(t)\vert_\rH\Big\vert \nonumber\\
 &\leq \kappa R^{\kappa-1} \vert \rA^\frac{1}{2}u(t)-\rA^\frac{1}{2}\rv(t)\vert_\rH,
\end{align}
which implies
\begin{equation}\label{eqn-3.10}
\begin{aligned}
\vert \phi(u)-\phi(\rv)\vert &= c_1 \Big\vert\vert \rA^\frac{1}{2}u\vert_\rH^\kappa-\vert \rA^\frac{1}{2}\rv\vert_\rH^\kappa\Big\vert\\
 &\leq c_1\kappa R^{\kappa-1} \vert \rA^\frac{1}{2}u-\rA^\frac{1}{2}\rv\vert_\rH\\
&= c_1\kappa R^{\kappa-1} \vert u-\rv\vert_\rV.
\end{aligned}
\end{equation}
Therefore, \autoref{ass-g-loc Lip} holds.
The proof of \autoref{example-phi} is complete.
\end{proof}

\begin{remark}
Let $n\in\mathbb{N}$. Assume that constants $\kappa_1,\dots,\kappa_n>2$, $c_0, c_1,\dots,c_n\geq 0$ satisfy that there exists $i=1,2,\dots, n$ such that $c_i > 0$. The function $\phi:\rV\mapsto \mathbb{R}$ is defined by
\begin{align}\label{eqn-phi-ex-1}
\phi(u):= \sum_{i=1}^n c_i\vert \rA^\frac{1}{2}u\vert_\rH^{\kappa_i}+c_0,\;\;u\in \rV.
\end{align}
Arguing as in  \autoref{example-phi} we can prove  that function $\phi$ defined above  satisfies Assumptions \ref{assumption-phi} and \ref{assumption-phi-1}.
\end{remark}
\begin{cor}
Let $u_0$ be an $\mathscr{F}_0$-measurable $\rV$-valued random variable with $\mathbb{E}\vert u_0\vert_\rV^2 < \infty$, and the function $\phi$ is defined in \eqref{eqn-phi-ex} or \eqref{eqn-phi-ex-1}.
Then there exists a unique global solution to the main problem \eqref{eqn-NS03}.
\end{cor}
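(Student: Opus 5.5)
The corollary should follow directly from \autoref{thm-main}. I will verify its hypotheses for the two choices of $\phi$ and then invoke the theorem. \autoref{ass-prob} is standing, and $u_0$ has the required integrability. It therefore remains to check \autoref{ass-g-loc Lip}, \autoref{assumption-phi} and, when $\dom=\mathbb{R}^3$, \autoref{assumption-phi-1}.

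For $\phi$ given by \eqref{eqn-phi-ex}, \autoref{example-phi} already provides all three assumptions, with the function $\Gamma(x)=(c_1^2+1)(x^\kappa+c_0^2)$. One point needs checking. The example bounds $|\phi(u)-\phi(\rv)|$ on $\rV$-balls, but \autoref{ass-g-loc Lip} concerns $g(u)=\phi(u)u$. So I will add the short computation
\[
|g(u)-g(\rv)|_\rV\le |\phi(u)|\,|u-\rv|_\rV+|\phi(u)-\phi(\rv)|\,|\rv|_\rV\le \big(c_1R^\kappa+c_0+c_1\kappa R^{\kappa}\big)|u-\rv|_\rV ,
\]
valid for $|u|_\rV,|\rv|_\rV\le R$. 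This gives $C_6(R)$. In the Lipschitz estimate I also use $|\rA^{1/2}w|_\rH\le |w|_\rV$.

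For the sum \eqref{eqn-phi-ex-1}, I check the same items term by term.

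\textbf{(i) Lower bound.} Take an index $i_0$ with $c_{i_0}>0$. All terms are nonnegative, so $|\phi(u)|\ge c_{i_0}|\rA^{1/2}u|_\rH^{\kappa_{i_0}}$. Part (i) of \autoref{assumption-phi} then holds with $\kappa=\kappa_{i_0}>2$ and $b=c_{i_0}$.

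\textbf{(ii) Boundedness.} On $X_T$-balls of radius $R$ we have $\sup_t|\phi(u(t))|\le \sum_i c_iR^{\kappa_i}+c_0$.

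\textbf{Growth condition.} Cauchy--Schwarz gives $|\phi|^2\le (n+1)\big(\sum_i c_i^2|\rA^{1/2}u|_\rH^{2\kappa_i}+c_0^2\big)$. Hence \autoref{assumption-phi-1} holds with the continuous increasing function $\Gamma(x)=(n+1)\big(\sum_i c_i^2x^{\kappa_i}+c_0^2\big)$, which tends to infinity.

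\textbf{Local Lipschitz property.} It follows from the mean value bound \eqref{eqn-3.10-1} applied to each term, combined with the product estimate above.

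With the hypotheses verified, case (i) or (ii) of \autoref{thm-main} applies, according to the domain, and yields the unique global solution. There is no real obstacle. The only places needing care are the passage from the Lipschitz property of $\phi$ to that of $g$, which the example leaves implicit, and choosing the pair $(\kappa,b)$ in (i) when several $\kappa_i$ appear; taking a single index with $c_{i_0}>0$ settles the latter.
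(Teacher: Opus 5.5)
Your proposal is correct and follows the paper's route: the paper obtains this corollary by checking the hypotheses of \autoref{thm-main} through \autoref{example-phi} and the remark after it, and then invoking the theorem. You also supply two steps the paper leaves implicit, namely the passage from the Lipschitz bound on $\phi$ to one on $g(u)=\phi(u)u$, and the verification of \autoref{ass-g-loc Lip} for the sum \eqref{eqn-phi-ex-1}, which that remark does not mention.
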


\section{Deterministic nonlinear damping}
\label{sec-deterministic damping}

In this section we study the Navier-Stokes Equation with strong nonlinear damping. We formulate a result when $\dom$ is a Poincar{\'e} domain.
The result below should be compared with Theorem \ref{thm-main}.

\begin{theorem}\label{thm-deterministic damping}
Assume that \autoref{ass-g-loc Lip} is  satisfied, and the following assumption is satisfied
\begin{trivlist}
\item[(i)]  $\dom$ is a bounded domain or an unbounded domain satisfying the Poincar\'e condition.
\end{trivlist}
We further assume that
\begin{trivlist}
\item[(iii)]  there exists constants $\kappa>4$ and $b>0$ such that
\begin{equation}\label{eqn-phi-deterministic}
  \vert\phi(u)\vert \geq b\vert \rA^\frac12 u\vert_\rH^\kappa, \;\; u \in D(\rA^\frac12).
  \end{equation}
  \end{trivlist}
or
\begin{trivlist}
\item[(iv)]  There exists $b> C_\nu$,
where
\begin{equation}\label{eqn-C_nu}
C_\nu:=\frac{27C^4}{256\nu^3}
\end{equation}
and $C$ is the constant from the Agmon inequality \eqref{eqn-the Agmon inequality}, such that
\begin{equation}\label{eqn-phi-deterministic-1}
  \vert\phi(u)\vert \geq b\vert \rA^\frac12 u\vert_\rH^4, \;\; u \in D(\rA^\frac12).
  \end{equation}
  \end{trivlist}
Then for every  $u_0 \in \rV$ and $f\in L^2_\loc(0,\infty;\rH)$, there exists a unique global solution
\[
u \in C([0,\infty);\rV) \cap L^2_{\loc}([0,\infty);D(\rA))
\]
 to the following nonlinearly damped NSEs:
\begin{equation}\label{eq-damped-2}
\begin{aligned}
&\frac{\partial u(t)}{\partial t} + \nu \rA u(t) + B(u(t)) = f(t) - \phi(u(t))\,u(t), \; t >0,\\
&u(0)=u_0.
\end{aligned}
\end{equation}
\end{theorem}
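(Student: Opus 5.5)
Our plan is the deterministic counterpart of the scheme used for \autoref{thm-main}: obtain a unique local maximal solution by a fixed-point argument, then rule out blow-up with an a priori bound on the $\rV$-norm. For local existence we work in $X_T$ and use the map
\[
\Psi(u)=S_\nu u_0+S_\nu\ast\big(f-B(u)-\phi(u)u\big),
\]
with $S_\nu(t)=e^{-t\nu\rA}$. By \autoref{prp-S}(i),(iii) (deterministic, $T_0=\infty$ in a Poincar\'e domain), \autoref{lem-inequality}, and the local Lipschitz property of $g(u)=\phi(u)u$ from \autoref{ass-g-loc Lip} (with $\vert g(u)\vert_\rH\le\vert g(u)\vert_\rV$), $\Psi$ is a contraction on a closed ball of $X_T$ for small $T$. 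The ball radius depends only on $\vert u_0\vert_\rV$ and $\Vert f\Vert_{L^2(0,1;\rH)}$; the $T^{1/4}$ and $T^{1/2}$ factors are as in \autoref{lem-Phi_T,F^n-bounded} and \autoref{prp-phi g-1}, without truncation. Uniqueness follows from the Gronwall estimate of \autoref{thm-local unique}. Gluing gives a maximal solution on $[0,T^\ast)$, and, exactly as in \autoref{prop-maximal solution implies blowup}, if $T^\ast<\infty$ then $\vert u\vert_{X_t}\to\infty$. Since the existence time depends only on $\vert u(t_0)\vert_\rV$, it is enough to bound $\sup_{t<T^\ast}\vert \rA^{1/2}u(t)\vert_\rH$ whenever $T^\ast<\infty$.

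For the a priori bound we test \eqref{eq-damped-2} with $\rA u$, which is legitimate since $u\in L^2(0,T;D(\rA))$ and $u'\in L^2(0,T;\rH)$ (Lions--Magenes). Writing $y=\vert \rA^{1/2}u\vert_\rH^2$, this gives
\[
\tfrac12 y'+\nu\vert \rA u\vert_\rH^2+\langle B(u),\rA u\rangle=\langle f,\rA u\rangle-\phi(u)y .
\]
The key issue is the sign of $\phi$. The damping is only helpful if $\phi\ge0$, so we interpret \eqref{eqn-phi-deterministic} as $\phi(u)\ge b\vert\rA^{1/2}u\vert_\rH^\kappa$. This is consistent with the examples, and by continuity of $\phi$ along $\rV$-continuous paths it holds once $\phi$ has a fixed sign; we make this explicit.

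As in \eqref{eqn-BA-L2}, we bound the convective term by
\[
\vert\langle B(u),\rA u\rangle\vert\le C\vert\rA u\vert_\rH^{3/2}y^{3/4}\le \tfrac{\nu}{4}\vert\rA u\vert_\rH^2+C_\nu y^3,
\]
tracking the Young constant so that it matches \eqref{eqn-C_nu}. We also use $\langle f,\rA u\rangle\le\frac{\nu}{4}\vert\rA u\vert_\rH^2+\frac1\nu\vert f\vert_\rH^2$. Combining these,
\[
y'+\nu\vert\rA u\vert_\rH^2\le \tfrac2\nu\vert f\vert_\rH^2+2C_\nu y^3-2b\,y^{1+\kappa/2}.
\]
\begin{trivlist}
\item[Case (iii).] For $\kappa>4$ we have $1+\kappa/2>3$, so by Young's inequality $2C_\nu y^3-2b y^{1+\kappa/2}\le K$ for a constant $K$.
\item[Case (iv).] For $\kappa=4$ the difference is $2(C_\nu-b)y^3\le0$.
\end{trivlist}
In either case, integrating gives
\[
\sup_{t\le T}y(t)+\nu\int_0^T\vert\rA u\vert_\rH^2\,dt\le y(0)+\tfrac2\nu\Vert f\Vert_{L^2(0,T;\rH)}^2+KT .
\]

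The $L^2$ part of the $\rV$-norm is controlled by Poincar\'e \eqref{eqn-Poincar\'e-1}, and the full $X_T$-norm by \eqref{eqn-Poincar\'e}. This contradicts blow-up at a finite $T^\ast$, so $T^\ast=\infty$, giving $u\in C([0,\infty);\rV)\cap L^2_\loc(0,\infty;D(\rA))$.

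The main obstacle is twofold. First, the exact bookkeeping of the constant in case (iv) must reproduce $C_\nu=\frac{27C^4}{256\nu^3}$; depending on how the $\nu$-weights are split, a different split may be needed. Second, the sign issue of $\phi$ has to be handled, and we expect the intended reading to be $\phi\ge0$.
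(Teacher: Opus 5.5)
Your architecture (fixed-point local solution, blow-up alternative, testing with $\rA u$, Agmon plus Young, case split on $\kappa$) is the paper's, and Case (iii) is fine because the constants play no role there. The gap is in Case (iv). The inequality
\[
C\vert\rA u\vert_\rH^{3/2}y^{3/4}\le \tfrac{\nu}{4}\vert\rA u\vert_\rH^2+C_\nu y^3
\]
is false. Minimising $\eps\nu x^2-Cx^{3/2}s$ over $x\ge0$ shows that $Cx^{3/2}s\le \eps\nu x^2+Ks^4$ holds for all $x,s\ge0$ if and only if $K\ge \frac{27C^4}{256\eps^3\nu^3}=C_\nu\eps^{-3}$.

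Reserving $\frac{\nu}{4}$ for the convective term (that is, $\eps=\frac14$) therefore costs $64C_\nu$. Your argument only proves case (iv) when $b\ge 64C_\nu$. You flagged this yourself, but no fixed choice of split repairs it. The value $C_\nu$ corresponds to spending \emph{all} of $\nu\vert\rA u\vert_\rH^2$ on the convective term, which leaves nothing to absorb $f$ or to control $\int_0^T\vert\rA u\vert_\rH^2\,dt$.

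The missing idea is to use that the hypothesis is the strict inequality $b>C_\nu$. Since $\eps\mapsto C_\nu\eps^{-3}$ decreases to $C_\nu$ as $\eps\uparrow1$, pick $\eps\in(0,1)$ with $C_{\nu,\eps}:=C_\nu\eps^{-3}\le b$. Then bound
\[
-\langle B(u),\rA u\rangle\le\eps\nu\vert\rA u\vert_\rH^2+C_{\nu,\eps}y^3,\qquad \langle f,\rA u\rangle\le\tfrac{1}{2(1-\eps)\nu}\vert f\vert_\rH^2+\tfrac12(1-\eps)\nu\vert\rA u\vert_\rH^2 .
\]
This gives
\[
\tfrac12 y'+\tfrac12(1-\eps)\nu\vert\rA u\vert_\rH^2\le\tfrac{1}{2(1-\eps)\nu}\vert f\vert_\rH^2+(C_{\nu,\eps}-b)\,y^3,
\]
and the last term is non-positive. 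This is exactly how the paper argues.

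On the sign issue, your reading $\phi\ge0$ is the one the paper tacitly uses: it writes $\vert\phi(u)\vert\,\vert\rA^{1/2}u\vert_\rH^2$ in the tested identity. You can make your continuity remark precise. For $u\neq0$, $\phi(u)=\langle g(u),u\rangle_\rV/\vert u\vert_\rV^2$ is continuous, and by the lower bound it is non-vanishing on the connected set $\rV\setminus\{0\}$. So $\phi$ has a fixed sign, and the argument requires that sign to be positive.
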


\begin{proof}[Sketch of the proof]
In view of \autoref{ass-g-loc Lip}, $-\phi(u)u$ is locally Lipschitz in $\rV$. By applying the Banach Fixed Point Theorem, there exists a unique maximal local strong solution $(u,\tau)$, where
$\tau \in (0,\infty]$ such that
\[
u \in C([0,\tau);\rV)\cap L^2_{\loc}([0,\tau);D(\rA)).
\]

To prove that there exists a global solution $u$ such that
\[
u \in C([0,\infty);\rV) \cap L^2_{\loc}([0,\infty);D(\rA)),
\]
it suffices to show that if $\tau<\infty$, then
\begin{align}\label{eqn-blowup}
\sup_{t\in [0,\tau)}\vert \rA^{1/2}u(t)\vert_\rH+\int_0^\tau \vert \rA u(t)\vert_\rH^2\, dt<\infty.
\end{align}

Let us choose and fix $\eps\in(0,1)$. Define
\begin{align}\label{eq-Cnueps}
C_{\nu,\eps}:=\frac{27C^4}{256\,\eps^3\nu^3}.
\end{align}
where $C$ is the constant from the Agmon inequality \eqref{eqn-the Agmon inequality}.

In view of \eqref{eqn-BA-L2}, for every $u\in D(\rA)$
\begin{align}\label{eqn-106}
-\langle B(u), \rA u\rangle_{\rH} \leq C\vert\rA u\vert_\rH^\frac32 \vert\rA^{1/2}u\vert_\rH^\frac32
\leq C_{\nu,\eps}\vert\rA^{1/2}u\vert_\rH^6+\eps\nu\vert\rA u\vert_\rH^2.
\end{align}
Moreover, by the Cauchy--Schwarz inequality and the Young inequality,
\begin{align}\label{eqn-107}
\langle f,\rA u\rangle_{\rH}\leq \vert f\vert_\rH\vert \rA u\vert_\rH
\leq C_{\nu,\eps}^\prime\vert f\vert_\rH^2+\frac12(1-\eps)\nu\vert \rA u\vert_\rH^2,
\end{align}
where
\[
C_{\nu,\eps}^\prime:=\frac{1}{2(1-\eps)\nu}.
\]

Taking the inner product of \eqref{eq-damped-2} with $\rA u$ in $\rH$, by inequalities \eqref{eqn-106} and \eqref{eqn-107} we infer that
\begin{align}\label{eqn-det-damp}
\frac12\frac{d}{dt}\vert \rA^{1/2}u\vert_\rH^2
+\nu\vert \rA u\vert_\rH^2
+\vert \phi(u)\vert\vert \rA^{1/2}u\vert_\rH^2
=&
\langle f,\rA u\rangle_{\rH}
-\langle B(u),\rA u\rangle_{\rH}\\
\leq & C_{\nu,\eps}^\prime\vert f\vert_\rH^2
+ C_{\nu,\eps} \vert\rA^{1/2}u\vert_\rH^{6}+\frac12(1+\eps)\nu\vert\rA u\vert_\rH^2,\nonumber
\end{align}
which implies
\begin{equation}\label{eqn-det-damp-y}
\frac12 \frac{d}{dt}\vert \rA^{1/2}u\vert_\rH^2+\frac12(1-\eps)\nu\vert \rA u\vert_\rH^2+\vert\phi(u)\vert\,\vert \rA^{1/2}u\vert_\rH^2
\leq C_{\nu,\eps}^\prime\vert f\vert_\rH^2+C_{\nu,\eps} \vert \rA^{1/2}u\vert_\rH^6.
\end{equation}

\textbf{Case 1:} Assume assumption (iii), i.e.   there exists constants $\kappa>4$ and $b>0$ such that
\eqref{eqn-phi-deterministic} holds. Then
\[
\vert\phi(u(t))\vert\vert \rA^{1/2}u(t)\vert_\rH^2\geq b \vert \rA^{1/2}u(t)\vert_\rH^{\kappa+2},\;\;t\geq 0.
\]
Since $\kappa+2>6$, by the Young inequality,
there exists a constant $C>0$ such that
\[
C_{\nu,\eps} \vert \rA^{1/2}u(t)\vert_\rH^6 \leq \frac12 b\vert \rA^{1/2}u(t)\vert_\rH^{\kappa+2} + C.
\]
We infer from \eqref{eqn-det-damp-y} that
\[
\frac12 \frac{d}{dt}\vert \rA^{1/2}u(t)\vert_\rH^2+\frac12(1-\eps)\nu\vert \rA u(t)\vert_\rH^2
+\frac{b}{2}\vert \rA^{1/2}u(t)\vert_\rH^{\kappa+2}
\leq C_{\nu,\eps}^\prime\vert f(t)\vert_\rH^2 + C.
\]
Therefore, there exists $C>0$ such that
\[
\frac{d}{dt}\vert \rA^{1/2}u(t)\vert_\rH^2+\frac12(1-\eps)\nu\vert \rA u(t)\vert_\rH^2\leq C\bigl(1+\vert f(t)\vert_\rH^2\bigr).
\]
Since $u_0 \in \rV$ and $f\in L^2_\loc(0,\infty;\rH)$, assume that $\tau<\infty$ then we infer that for $t\in [0,\tau)$,
\[
\vert \rA^{1/2}u(t)\vert_\rH^2+(1-\eps)\nu\int_0^t \vert \rA u(s)\vert_\rH^2\,ds
\leq \vert \rA^{1/2}u_0\vert_\rH^2+ Ct + C\int_0^t \vert f(s)\vert_\rH^2\,ds <\infty.
\]
Therefore, \eqref{eqn-blowup} holds.

\textbf{Case 2:} Assume assumption (iv), i.e. there exist $\eps>0$ and $b> C_\nu$,  with $b\geq C_{\nu,\eps}$, where $C_\nu$ and $C_{\nu,\eps}$
are defined in \eqref{eqn-C_nu} and \eqref{eq-Cnueps} respectively,
 such that condition \eqref{eqn-phi-deterministic-1} is satisfied.  Thus, by \eqref{eqn-phi-deterministic-1}, we infer that
\[
\vert\phi(u)\vert\vert \rA^{1/2}u(t)\vert_\rH^2\geq b\vert \rA^{1/2}u(t)\vert_\rH^6 \geq C_{\nu,\eps} \vert \rA^{1/2}u(t)\vert_\rH^6.
\]
Substituting this into \eqref{eqn-det-damp-y}, we infer that
\begin{align*}
\frac12 \frac{d}{dt}\vert \rA^{1/2}u(t)\vert_\rH^2+\frac12(1-\eps)\nu\vert \rA u(t)\vert_\rH^2
&\leq C_{\nu,\eps}^\prime\vert f(t)\vert_\rH^2+C_{\nu,\eps} \vert \rA^{1/2}u(t)\vert_\rH^6-\vert \phi(u)\vert \vert \rA^{1/2}u(t)\vert_\rH^2\\
&\leq C_{\nu,\eps}^\prime\vert f(t)\vert_\rH^2.
\end{align*}
Since $u_0 \in \rV$ and $f\in L^2_\loc(0,\infty;\rH)$, assume that $\tau<\infty$ then we infer that for $t\in [0,\tau)$,
\[
\vert \rA^{1/2}u(t)\vert_\rH^2+(1-\eps)\nu\int_0^t \vert \rA u(s)\vert_\rH^2\,ds
\leq \vert \rA^{1/2}u_0\vert_\rH^2+ 2C_{\nu,\eps}^\prime \int_0^t \vert f(s)\vert_\rH^2\,ds <\infty,
\]
which implies \eqref{eqn-blowup} holds.

The proof is complete.
\end{proof}

\begin{remark}
    Condition (iii) can be weakened to the following one:
\begin{trivlist}
\item[(iii')]  there exist constants $\kappa>4$,  $b>0$ and $R>0$ such that
\begin{equation}\label{eqn-phi-deterministic1}
\vert \phi(u)\vert \geq b\vert \rA^\frac12 u\vert_\rH^\kappa, \;\; \forall u \in D(\rA^\frac12)\mbox{ with } \vert \rA^{1/2}u\vert_{\rH}^2 \geq R.
  \end{equation}
  \end{trivlist}
  Condition (iv) can be weakened to the following one:
\begin{trivlist}
\item[(iv')]  There exists $b> C_\nu$, where $C_\nu$ is defined by \eqref{eqn-C_nu}, and $R>0$  such that
\begin{equation}\label{eqn-phi-deterministic-11}
\vert\phi(u)\vert \geq b\vert \rA^\frac12 u\vert_\rH^4, \;\; \forall u \in D(\rA^\frac12)\mbox{ with } \vert \rA^{1/2}u\vert_{\rH}^2 \geq R.
  \end{equation}
  \end{trivlist}

\end{remark}

\appendix
\section{Martingales}\label{sec-Martingales}
To prove the global existence result, we require a martingale inequality, which we state and prove here for completeness.
\begin{prp}\label{prop-Z_t}
Let $\rM_{t \wedge \bar{\tau}_m}$ be a martingale and $a>0$. Define the process $Z$ by
\[
Z_t := e^{ a \rM_t - \frac{a^2}{2} \langle \rM \rangle_t }.
\]
Then, for every $\lambda > 0$,
\begin{align}\label{eqn-Z_t-constant}
\mathbb{P}\left(\sup_{t \in [0, T]} Z_{t \wedge \bar{\tau}_m} \geq \lambda \right) \leq \frac{1}{\lambda}.
\end{align}

Moreover, if $\lambda$ is replaced by a positive $\mathcal{F}_0$-measurable
random variable $\Lambda$, then
\begin{align}\label{eqn-Z_t-rv}
\mathbb{P}\left(
\sup_{t\in[0,T]}Z_{t\wedge\bar{\tau}_m}\geq \Lambda
\right)
\leq \mathbb{E}\left(
\frac{1}{\Lambda}\right).
\end{align}
\end{prp}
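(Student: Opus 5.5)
The plan is to reduce \autoref{prop-Z_t} to Doob's maximal inequality for a nonnegative supermartingale. Write $Z^m_t:=Z_{t\wedge\bar{\tau}_m}$. First I will show that $Z^m$ is a nonnegative continuous local martingale. Since $\rM_{t\wedge\bar{\tau}_m}$ is a continuous martingale with quadratic variation $\langle \rM\rangle_{t\wedge\bar{\tau}_m}$, It\^o's formula applied to $x\mapsto e^{x}$ and the semimartingale $a\rM_{t\wedge\bar{\tau}_m}-\frac{a^2}{2}\langle\rM\rangle_{t\wedge\bar{\tau}_m}$ gives
\[
Z^m_t = 1 + a\int_0^{t\wedge\bar{\tau}_m} Z_s\, d\rM_s .
\]
The drift terms cancel exactly, because the It\^o correction $\frac{a^2}{2}Z\,d\langle\rM\rangle$ balances the compensator term. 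Hence $Z^m$ is a nonnegative local martingale with $Z^m_0=1$. By Fatou's lemma applied along a localizing sequence, it is then a supermartingale, so $\mathbb{E} Z^m_t\le 1$ for all $t$. I will do all of this at the level of the stopped process, without assuming Novikov's condition, since $\langle\rM\rangle$ need not be exponentially integrable.

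Next I apply the maximal inequality for nonnegative right-continuous supermartingales on $[0,T]$: for $\lambda>0$,
\[
\lambda\,\mathbb{P}\Big(\sup_{t\in[0,T]} Z^m_t\ge\lambda\Big)\le \mathbb{E} Z^m_0 = 1 .
\]
This is exactly \eqref{eqn-Z_t-constant}. If one prefers to use only the martingale version of Doob's inequality, the alternative is to localize with $\sigma_k$, where $Z^m_{\cdot\wedge\sigma_k}$ is a true martingale with mean $1$. Doob's inequality then gives the bound for the stopped process, and letting $k\to\infty$ with continuity of paths passes the bound to $Z^m$. This needs a little care, since the event $\{\sup Z^m\ge\lambda\}$ should be approximated by the increasing events $\{\sup_{t\le T\wedge\sigma_k}Z^m_t>\lambda'\}$ with $\lambda'<\lambda$, followed by $\lambda'\uparrow\lambda$.

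For the random level \eqref{eqn-Z_t-rv}, I condition on $\mathcal{F}_0$. The plan is to show the conditional version
\[
\mathbb{P}\Big(\sup_{t\in[0,T]}Z^m_t\ge\lambda\,\Big|\,\mathcal{F}_0\Big)\le \frac{1}{\lambda}\quad\text{a.s.}
\]
This follows because for any $G\in\mathcal{F}_0$ the process $\mathbf{1}_G Z^m$ is still a nonnegative supermartingale. Doob's inequality then gives $\lambda\,\mathbb{P}(G\cap\{\sup Z^m\ge\lambda\})\le\mathbb{P}(G)$.

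Then I approximate $\Lambda$ by simple $\mathcal{F}_0$-measurable random variables. I take $\Lambda_j\le\Lambda$ with values in a countable grid, $\Lambda_j\uparrow\Lambda$, and partition $\Omega$ into $\{\Lambda_j=\lambda_i\}\in\mathcal{F}_0$. Summing the bounds gives $\mathbb{P}(\sup Z^m\ge\Lambda)\le\mathbb{P}(\sup Z^m\ge\Lambda_j)\le\mathbb{E}(1/\Lambda_j)$. Finally monotone convergence, with $1/\Lambda_j\downarrow 1/\Lambda$, applies once $\mathbb{E}(1/\Lambda_1)<\infty$. Otherwise I choose the grid approximation from above, $\Lambda_j\downarrow\Lambda$, and use $\{\sup\ge\Lambda\}=\bigcap_j$-limits together with continuity of probability.

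The main obstacle is the first step. Justifying that $Z^m$ is a genuine supermartingale, and that Doob's inequality applies without integrability assumptions on $\langle\rM\rangle$, must be handled through localization and Fatou rather than through Novikov. The random-level extension is routine once the conditional form is available.
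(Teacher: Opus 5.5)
Your proposal is correct and follows the paper's proof. The paper also argues that $Z_{\cdot\wedge\bar{\tau}_m}$ is a nonnegative local martingale and hence a supermartingale, and obtains \eqref{eqn-Z_t-constant} from the maximal inequality with $\mathbb{E}Z_0=1$. It then gets \eqref{eqn-Z_t-rv} by applying that inequality to $\mathbf{1}_{G}Z_{\cdot\wedge\bar{\tau}_m}$ for $G\in\mathcal{F}_0$, summing over the level sets of a discretised $\Lambda$, and passing to the limit.

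Your countably-valued approximation is slightly more careful than the paper's: finitely-valued $\Lambda_n$ with $0<\Lambda_n\le\Lambda$ exist only when $\Lambda$ is bounded away from zero. Your fallback with $\Lambda_j\downarrow\Lambda$ is unnecessary, because a geometric grid with $\Lambda/2\le\Lambda_j\le\Lambda$ already makes $\mathbb{E}(1/\Lambda_j)$ finite whenever $\mathbb{E}(1/\Lambda)$ is.
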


\begin{proof}
Since $\rM_t$ is a local martingale, it follows from \cite[Chapter 4, Proposition 3.4]{Revuz+Yor_1999} that $Z_t$ is a nonnegative local martingale. By \cite[Chapter 1, Proposition 5.19(ii)]{Karatzas+Shreve_1991}, $Z_t$ is thus a nonnegative supermartingale.

Applying the maximal inequality for nonnegative supermartingales \cite[Theorem 1.7 in Chapter II]{Revuz+Yor_1999}, we deduce that
\[
\mathbb{P} \left( \sup_{t \in [0, T]} Z_{t \wedge \bar{\tau}_m} \geq \lambda \right)
\leq \frac{1}{\lambda} \sup_{t \in [0, T]} \mathbb{E}  Z_{t \wedge \bar{\tau}_m}
\leq \frac{1}{\lambda} \mathbb{E}  Z_0
= \frac{1}{\lambda}.
\]

In the following we aim to prove inequality \eqref{eqn-Z_t-rv}.
Firstly we aim to show that \eqref{eqn-Z_t-rv} holds for simple random variables.
Assume that $\tilde\Lambda$ is a simple random variable, such that
\[
\tilde\Lambda=\sum_{i=1}^N \lambda_i \mathbf{1}_{A_i},
\]
where $\lambda_i>0$, $A_i\in\mathcal{F}_0$, $A_i\cap A_j=\varnothing\ (i\neq j)$ and $\bigcup_{i=1}^N A_i=\Omega$.

Let us choose and fix $i=1,\cdots,N$ and $0\leq s\leq t\leq T$. Since $A_i\in\mathcal{F}_0\subset \mathcal{F}_s$
\begin{align*}
\mathbb{E}\left[\mathbf{1}_{A_i} Z_{t\wedge\bar{\tau}_m}| \mathcal F_s\right]
&=
\mathbf{1}_{A_i}
\mathbb{E}\left[Z_{t\wedge\bar{\tau}_m}|\mathcal F_s\right]
\leq
\mathbf{1}_{A_i} Z_{s\wedge\bar{\tau}_m},
\end{align*}
which implies
$\{\mathbf{1}_{A_i}Z_{t\wedge\bar{\tau}_m}\}_{t\in[0,T]}$
is a nonnegative supermartingale.
Therefore, by the maximal inequality for nonnegative supermartingales \cite[Theorem 1.7 in Chapter II]{Revuz+Yor_1999}, we infer that
\[
\mathbb{P}\left(
A_i\cap \{\sup_{t\in[0,T]}Z_{t\wedge\bar{\tau}_m}\geq \lambda_i\}
\right)
\leq \frac{1}{\lambda_i}
\mathbb{E}\left[\mathbf{1}_{A_i}Z_0\right]=\frac{1}{\lambda_i}\mathbb{P}(A_i).
\]
Therefore,
\begin{equation}\label{eqn-simple rv}
\begin{aligned}
\mathbb{P}(\sup_{t\in[0,T]}Z_{t\wedge\bar{\tau}_m}\geq \tilde\Lambda)
&=
\sum_{i=1}^N
\mathbb{P}\left(
A_i\cap \{\sup_{t\in[0,T]}Z_{t\wedge\bar{\tau}_m}\geq \lambda_i\}
\right)  \\
&\leq
\sum_{i=1}^N
\frac{\mathbb{P}(A_i)}{\lambda_i} =
\mathbb{E}\left(\frac{1}{\tilde\Lambda}\right).
\end{aligned}
\end{equation}

Now assume that $\Lambda$ is a positive $\mathcal F_0$-measurable
random variable. There exists a sequence of positive simple
$\mathcal F_0$-measurable random variables $\{\Lambda_n\}_{n\in\mathbb N}$
such that
\[
0<\Lambda_n\leq \Lambda,\mbox{ and }  \Lambda_n\uparrow \Lambda\;\;
\mathbb{P}\text{-a.s.}
\]
By \eqref{eqn-simple rv}, we have
\[
\mathbb{P}\left(
\sup_{t\in[0,T]}Z_{t\wedge\bar{\tau}_m}\geq \Lambda_n
\right)
\leq
\mathbb{E}\left[\frac1{\Lambda_n}\right].
\]
Let $n\to\infty$, by the monotone convergence theorem we infer that
\[
\mathbb{P}\left(
\sup_{t\in[0,T]}Z_{t\wedge\bar{\tau}_m}\geq \Lambda
\right)
\leq
\mathbb{E}\left[\frac1\Lambda\right].
\]

The proof of \autoref{prop-Z_t} is complete.
\end{proof}

\section*{Acknowledgments}
The first named author would like to thank Benedetta Ferrario and Enrico Priola, the organizers of a Pavia "One-day workshop on SPDEs", where he presented a talk based on this paper. The authors would like to thank Jiahui Zhu for informing them about the \cite{Hong+Li+Liu_2024} paper. The second named author acknowledges financial support from the China Scholarship Council. This work was partially supported by the Key Laboratory of Nonlinear Analysis and its Applications (Chongqing University), Ministry of Education; NNSF of China (Grant No.12371146); the Fundamental Research Funds for the Central Universities (Grant No. 2025CDJZKPT-09); Natural Science Foundation Project of CQ (Grant No. CSTB2025NSCQ-GPX0722).

    \section*{Declarations:}

	\noindent 	\textbf{Ethical Approval:}   Not applicable

	\noindent  \textbf{Conflict of interest: } On behalf of all authors, the corresponding author states that there is no conflict of interest.

	\noindent 	\textbf{Authors' contributions:} All authors have contributed equally.

	\noindent 	\textbf{Availability of data and materials:} Not applicable.

\bibliographystyle{alpha}
\bibliography{biblio_NSEs-1}

\end{document}